% !TEX useTabs

\documentclass{amsart}
\usepackage{todonotes}
\usepackage{xypic}
\usepackage{graphicx, amsmath, amssymb, amsthm}
\usepackage{tikz-cd}
\usepackage{hyperref}

%Support
\newcommand{\support}[1]{The author was supported by {#1}.}

\newcommand{\NSFThree}{NSF Grant DMS-1509652}

%Coordinates
\newcommand{\MAHAddress}{University of California Los Angeles, Los Angeles, CA 90095}
\newcommand{\MAHemail}{\tt{mikehill@math.ucla.edu}}

%Basic Notation

\newcommand{\Z}{{\mathbb  Z}}

\newcommand{\F}{{\mathbb F}}

%Functors
 %Matching Mackey notation
 %Matching Mackey notation

%\newcommand{\res}{\downarrow}
\DeclareMathOperator{\Hom}{Hom}

\DeclareMathOperator{\Map}{Map}

\DeclareMathOperator{\Stab}{Stab}

\DeclareMathOperator{\Sing}{Sing}
\DeclareMathOperator*{\colim}{colim}
\DeclareMathOperator{\Aut}{Aut}
\DeclareMathOperator{\Conf}{Conf}

%Mackey Functor Structure
\newcommand{\res}{res}

%Groups

%\newcommand{\Cp}[1]{G_{#1}}

%Mackey Functor Names
\newcommand{\m}[1]{{\protect\underline{#1}}}

\newcommand{\mM}{\m{M}}

\newcommand{\mR}{\m{R}}
\newcommand{\mB}{\m{B}}

\newcommand{\mH}{\m{H}}

\newcommand{\mA}{\m{A}}

%Mathcal Commands
\newcommand{\cc}[1]{\mathcal #1}

\newcommand{\ccD}{\cc{D}}
\newcommand{\cF}{\cc{F}}
\newcommand{\cO}{\cc{O}}

%AJB kludges

%Category Names

\newcommand{\Sp}{\mathcal Sp}
\newcommand{\Emb}{\textnormal{Emb}}

\newcommand{\Ninfty}{N_\infty}

\newcommand{\Top}{\mathcal Top}

\newcommand{\Orb}{\mathcal Orb}

%Hyphens
\mathchardef\mhyphen=45

 %Random Macros
\newcommand{\EM}{Eilenberg-Mac~Lane}

%Theorem Styles
\numberwithin{equation}{section}

\newtheorem{theorem}{Theorem}[section]
\newtheorem{lemma}[theorem]{Lemma}
\newtheorem{corollary}[theorem]{Corollary}

\newtheorem{proposition}[theorem]{Proposition}

\newtheorem*{theorem*}{Theorem}
\newtheorem*{proposition*}{Proposition}

\theoremstyle{remark}
\newtheorem{remark}[theorem]{Remark}

\theoremstyle{definition}

\newtheorem{definition}[theorem]{Definition}

%Miscellaneous macros

\newcommand{\defemph}[1]{\textbf{#1}}

\title[Signed Loop Spaces]{On the algebras over equivariant little disks}

\author{Michael A.~Hill}
\thanks{\support{\NSFThree}}
\address{\MAHAddress}
\email{\MAHemail}

\begin{document}

\begin{abstract}
We describe the structure present in algebras over the little disks operads for various representations of a finite group $G$, including those that are not necessarily universe or that do not contain trivial summands. We then spell out in more detail what happens for $G=C_{2}$, describing the structure on algebras over the little disks operad for the sign representation. Here we can also describe the resulting structure in Bredon homology. Finally, we produce a stable splitting of coinduced spaces analogous to the stable splitting of the product, and we use this to determine the homology of the signed James construction.
\end{abstract}

\maketitle

\section{Twisted equivariant ``monoids''}
We begin with a purely algebra observation. Consider the free associative monoid on the set $C_2=\{x, \bar{x}\}$. The fixed points of this are the one point set consisting of the identity element, and elements which we expect from the commutative case, like the norm classes $x\bar{x}$ are not fixed here. If we enforce commutativity, then these become fixed, but as homotopy theorists, we should instead attach cells which connect $x\bar{x}$ to $\bar{x}x$, building a $C_2$-CW complex. Unfortunately, since $x\bar{x}$ and $\bar{x}x$ form a free orbit, these new cells will also be $C_2$-free. Continuing to make the multiplication more highly commutative will never add fixed cells, so we never produce new fixed points. This is reflecting a classical observation.

\begin{proposition}
If $\cO^{tr}$ is an $E_\infty$ operad with trivial $G$-action, and $X$ is a free $G$-space, then the fixed points of the free algebra are trivial:
\[
\big(\mathbb P_{\cO^{tr}}(X)\big)^G\simeq \ast.
\]

More generally, if $V$ is a countable dimensional vector space with trivial $G$-action, and if $\ccD(V)$ is the little disks operad for $V$, then the free algebra on $X$ also has trivial fixed points:
\[
\big(\mathbb P_{\ccD(V)}(X)\big)^G\simeq\ast.
\]
\end{proposition}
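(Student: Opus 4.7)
The plan is to decompose the free algebra in the usual way,
\[
\mathbb P_{\cO^{tr}}(X)\simeq \bigsqcup_{n\geq 0}\cO^{tr}(n)\times_{\Sigma_n}X^n,
\]
observe that with trivial $G$-action on the operad only the diagonal action on $X^n$ remains, and show that the $G$-fixed points of each summand with $n\geq 1$ are empty. Fixed points commute with disjoint unions, so it suffices to analyze each summand separately and then read off what is left.

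The core computation proceeds as follows. A point $[e;x_1,\dots,x_n]$ in $\cO^{tr}(n)\times_{\Sigma_n}X^n$ is $G$-fixed if and only if for every $g\in G$ there exists a permutation $\sigma_g\in\Sigma_n$ with $e\sigma_g=e$ and $gx_i=x_{\sigma_g(i)}$ for all $i$. The defining property of an $E_\infty$ operad is that $\Sigma_n$ acts freely on $\cO^{tr}(n)$, so the first condition forces $\sigma_g$ to be the identity; the second condition then forces each $x_i$ to be $G$-fixed. Since $X$ is $G$-free this is impossible, so the $n\geq 1$ summands contribute nothing. The $n=0$ summand contributes $\cO^{tr}(0)^G\simeq\ast$, yielding the claim.

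For the second statement I would reduce to the first by observing that $\ccD(V)$ for a countable-dimensional trivial $G$-representation $V$ is an $E_\infty$ operad with trivial $G$-action: the space $\ccD(V)(n)$ is equivalent to $\Conf_n(V)$, which is $\Sigma_n$-free and contractible for countable-dimensional $V$, and $G$ acts trivially on it because it acts trivially on $V$. Thus the first part applies verbatim.

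The main obstacle is essentially notational rather than conceptual: one must pin down a specific model for the free algebra $\mathbb P$ and a convention for the balanced product so that the fixed-point calculation above is literally correct. Once a model is fixed, the proof is immediate, and in fact the argument makes clear that the result is sharp --- freeness of the $\Sigma_n$-action on $\cO(n)$ is exactly what is being used to destroy any potential norm-type fixed classes, which is the key point the introductory paragraph was motivating.
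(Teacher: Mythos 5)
Your argument is correct, and in fact the paper states this proposition without any proof (it is offered as a classical observation), so there is nothing to compare against except the intended folklore argument --- which is exactly yours: decompose $\mathbb P_{\cO}(X)$ as $\coprod_n \cO(n)\times_{\Sigma_n}X^n$, use $\Sigma_n$-freeness of $\cO(n)$ to force the comparison permutation $\sigma_g$ to be the identity, and then use $G$-freeness of $X$ to empty out every summand with $n\geq 1$, leaving only $\cO(0)^G\simeq\ast$. One caveat on your second half: the proposition is evidently meant to include finite-dimensional trivial $V$ (e.g.\ $V=\R$, which recovers the free associative monoid example opening the paper), and for such $V$ the space $\ccD(V)_n\simeq\Conf_n(V)$ is not contractible, so $\ccD(V)$ is not an $E_\infty$ operad and your reduction to the first statement does not apply as stated. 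This costs nothing, because your core computation never uses contractibility of $\cO(n)$ for $n\geq 1$ --- only the freeness of the $\Sigma_n$-action (which the paper proves for $\ccD(V)_n$ for every orthogonal $V$), the triviality of the $G$-action by conjugation, and the fact that the arity-zero space is a point. So you should run the fixed-point computation directly on $\ccD(V)$ for arbitrary trivial $V$ rather than routing through the $E_\infty$ case; with that adjustment the proof is complete.
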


This is in stark contrast with the ``genuine'' commutative case, where the tom Dieck splitting shows that the putative norm classes do give rise to new fixed points.

\begin{proposition}
If $\cO$ is a $G$-$E_\infty$ operad, and if $X$ is a free $G$-space, then we have
\[
\big(\mathbb P_{\cO}(X)\big)^G\simeq \mathbb P_{\cO^{tr}}(X/G).
\]
\end{proposition}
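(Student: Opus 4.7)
The plan is to compute the $G$-fixed points of $\mathbb{P}_{\cO}(X)=\coprod_{n}\cO(n)\times_{\Sigma_{n}}X^{n}$ one arity at a time via a tom Dieck-style decomposition. For any $(G\times\Sigma_{n})$-space $Y$ and any $G$-space $Z$, one has
\[
\bigl(Y\times_{\Sigma_{n}}Z^{n}\bigr)^{G}\simeq\coprod_{[\phi]}Y^{\Gamma_{\phi}}\times_{C(\phi)}(Z^{n})^{\Gamma_{\phi}},
\]
the coproduct running over $\Sigma_{n}$-conjugacy classes of homomorphisms $\phi\colon G\to\Sigma_{n}$, where $\Gamma_{\phi}\subseteq G\times\Sigma_{n}$ is the graph of $\phi$ and $C(\phi)\subseteq\Sigma_{n}$ is its centralizer. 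I will apply this with $Y=\cO(n)$ and $Z=X$ and show that each surviving summand contributes exactly one term of $\mathbb{P}_{\cO^{tr}}(X/G)$.

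Because $X$ is a free $G$-space, an element of $(X^{n})^{\Gamma_{\phi}}$ exists only when the $G$-set structure on $\{1,\dots,n\}$ determined by $\phi$ is itself free: pointwise, each coordinate must have trivial $G$-stabilizer. This forces $n=k|G|$, pins down $\phi$ up to $\Sigma_{n}$-conjugacy for each $k\geq 0$, identifies $(X^{n})^{\Gamma_{\phi}}\cong X^{k}$, and gives $C(\phi)\cong G\wr\Sigma_{k}=G^{k}\rtimes\Sigma_{k}$ acting in the standard wreath-product way ($G^{k}$ diagonally on the $k$ copies of $X$ and $\Sigma_{k}$ by permutation).

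Next I invoke the $G$-$E_{\infty}$ hypothesis: by definition $\cO(n)$ is $\Sigma_{n}$-free and has contractible fixed points under every graph subgroup of $G\times\Sigma_{n}$. Consequently $\cO(n)^{\Gamma_{\phi}}$ is a contractible space with free $C(\phi)$-action, i.e.\ a model for $EC(\phi)$, so that
\[
\cO(n)^{\Gamma_{\phi}}\times_{C(\phi)}X^{k}\simeq X^{k}_{hC(\phi)}.
\]
The short exact sequence $1\to G^{k}\to C(\phi)\to\Sigma_{k}\to 1$ lets me take this homotopy quotient in two stages. Since $X$, and hence $X^{k}$, is $G^{k}$-free, the first stage strictifies to $(X/G)^{k}$, and what remains is
\[
X^{k}_{hC(\phi)}\simeq\bigl((X/G)^{k}\bigr)_{h\Sigma_{k}}\simeq E\Sigma_{k}\times_{\Sigma_{k}}(X/G)^{k},
\]
which is precisely the arity-$k$ summand of $\mathbb{P}_{\cO^{tr}}(X/G)$. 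Summing over $k$ (equivalently, over $n=k|G|$) gives the desired equivalence.

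The main technical point, and the main obstacle, is the final identification: I need to justify the two-stage homotopy-quotient reduction and verify that the residual $\Sigma_{k}$-action on $(X/G)^{k}$ is the honest permutation action, not one twisted by the wreath extension. Once that is in hand, naturality of the tom Dieck decomposition in both the operad and in $X$ ensures that the $k$-indexed summands assemble into a free $E_{\infty}$-algebra on $X/G$ rather than merely a disjoint union of spaces, completing the argument.
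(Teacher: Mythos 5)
Your argument is correct, and it is precisely the argument the paper has in mind: the proposition is stated without proof, with the text attributing it to the tom Dieck splitting, and your arity-by-arity decomposition of $\big(\cO(n)\times_{\Sigma_n}X^n\big)^G$ over conjugacy classes of graph subgroups is exactly that splitting in the unstable, free-algebra form. The technical point you flag at the end does go through: restricting the contractible free $C(\phi)$-space $\cO(n)^{\Gamma_\phi}$ to $G^k$ gives a model for $EG^k$, the residual $\Sigma_k$-action on $\cO(n)^{\Gamma_\phi}\times_{G^k}X^k$ is free because $\cO(n)^{\Gamma_\phi}/G^k$ is already $\Sigma_k$-free, and the induced $\Sigma_k$-action on $(X/G)^k$ is the untwisted permutation action since $G^k$ is exactly the kernel of $C(\phi)\to\Sigma_k$ acting coordinatewise.
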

The obvious map
\[
i_e^\ast\mathbb P_{\cO}(X)\simeq \mathbb P_{\cO^{tr}}(i_e^\ast X)\to \mathbb P_{\cO^{tr}}(X/G)\hookrightarrow \big(\mathbb P_{\cO}(X)\big)^G
\] 
is the operadic norm from the underlying space to the fixed points. Adams' original discussion of the construction of the transfer in the $G$-equivariant Spanier-Whitehead category shows that we have a similar norm map for free algebras over the little disks algebra for any orthogonal $G$-representation in which $G$ equivariantly embeds. 

The primary goal of this paper is to study these exact phenomena by carefully unpacking some of the structure that we see in these algebras over little disks operads for (possibly finite dimensional) orthogonal $G$-representations. Since a trivial $E_\infty$ operad and a $G$-$E_\infty$ operad are both little disks operads, we will recover our classical understanding. More generally, a little disks operad for a $G$-universe $U$ is the prototype of an $\Ninfty$ operad, the definition of which and the basic properties thereof were described by Blumberg and the author in \cite{BHNinfty}. In particular, the $\Ninfty$ operads behave very much like ordinary $E_\infty$ operads plus operations which coherently encode norm maps. A natural question then, is what operads deserve to be called $\mathcal N_k$ operads for finite $k$. Exploring the examples for little disks gives desiderata for defining the $\mathcal N_{k}$, but we leave this for later work.

Work of Rourke and Sanderson \cite{RouSand}, Segal \cite{SegalConf}, and Hauschild \cite{Hausch}, equivariantizing foundational work of May \cite{MayGeom}, connects this general analysis to the study of loop spaces. They show that if $\cO$ is the little disks operad for a representation $V$, then the natural map
\[
\mathbb P_{\cO}(X)\to \Omega^{V}\Sigma^{V}X
\]
is a group completion. Thus our operadic analysis describes geometric content for these generalized loop spaces. In particular, it provides a basic step for describing the operations on $V$-fold loop spaces, in the vein of Cohen \cite{CohenHstar}.

A secondary goal of this paper is to build on the connection to these equivariant loop spaces, collecting some results about signed-loop spaces for the group $C_2$ which have become folklore. We will study the basic properties of signed loop spaces. We then turn to cohomology, looking at the cohomology of a coinduced space and the resulting structure on the homology of a signed loop space. We next study the signed version of the James construction on a $C_2$-space, showing that after suspension by the regular representation it splits. As an appendix, we include a result of independent interest: that coinduction for the group $C_{2}$ splits after a single signed suspension.

\subsection*{Notation and conventions}
In all that follows, $G$ will denote a finite group, and $H$ will usually denote a subgroup thereof. Letters around $X$ in the alphabet will denote $G$-spaces; letters around $T$ in the alphabet will denote $G$-sets; and letters around $V$ in the alphabet will denote $G$-representations. 

If $X$ is a $G$-space and $x\in X$ is any point, then $\Stab(x)$ will denote the stabilizer subgroup of $x$ in $G$. If $H\subset G$, then $W_{G}(H)$ will denote the Weyl group of $H$ in $G$.

In the second half of the paper, we will begin doing equivariant algebra, and we will work often with Mackey functors for the cyclic group of order $2$, $C_{2}$. The Mackey functor versions of standard classical constructions like homotopy or homology groups will be denoted with an underline to the classical symbol. For Mackey functors, we will follow Lewis' notation, stacking the values at the two orbits and indicating the structure maps:
\[
\xymatrix{
{\m{M}(C_{2}/C_{2})}
	\ar@(l,l)[d]_{\res}
	\\
{\m{M}(C_{2}/e)}
	\ar@(r,r)[u]_{tr}
	&
}
\]

Finally, for gradings, we follow the increasingly standard wild-card notation: $\ast$ will be reserved for an arbitary element of $\mathbb Z$, while $\star$ will denote an element of $RO(G)$.

\subsection*{Acknowledgements}

We thank Mark Behrens and Dylan Wilson for several very inspiring conversations early on in this project and for reading an early preprint. We also think Kirsten Wickelgren, Ugur Yigit, and Doug Ravenel for several helpful conversations about the James construction and signed loops. We also thank Andrew Blumberg and Tyler Lawson for help clarifying several operadic points. 

\section{Algebras over equivariant little disks operads}
\subsection{The general case}
We take as our model the analysis of algebras over the little disks for a $G$-universe from \cite[Theorem 4.19]{BHNinfty}. The key idea there was to understand which finite $H$-sets embed in our universe. Essentially everything goes through without change: the only difference is that the spaces parameterizing our generalized multiplications are no longer necessarily contractible. Much of the analysis of the spaces is closely related to results of Rourke and Sanderson \cite{RouSand}; we cast things in a way that most transparently reflects how various norms arise in the algebras.

\subsubsection{Basic Definition}
\begin{definition}
Let $V$ be an orthogonal representation of $G$ (not necessarily finite dimensional, and not necessarily a universe for $G$). Let $W$ be a finite dimensional orthogonal subrepresentation of $V$. A {\defemph{little disk}} in $W$ is a (not necessarily equivariant) affine map $D(V)\to D(V)$. 

Define the space $\cc{D}_W(V)_n$ to be the space of $n$-tuples of nonoverlapping little disks. This is a $G\times\Sigma_n$-space, where $G$ acts by conjugation and $\Sigma_n$ by permuting the coordinates. The operadic structure is by composition.

If $W\subset W'$, then the affine maps defining any of the little disks in $\cc{D}_W(V)_n$ extend to give little disks in $\cc{D}_{W'}(V)_n$, so we define 
\[
\cc{D}(V)_n=\colim_{W} \cc{D}_W(V)_n.
\]
\end{definition}
Of course, if $V$ is finite dimensional, then $\{V\}$ is cofinal in the set of finite dimensional subspaces of $V$, so we can ignore any colimits.

The operadic multiplications and transfers are produced by the fixed points for various subgroups of $G\times\Sigma_n$.

\subsubsection{Structure of graph fixed points}
We begin with a basic observation which follows immediately from the requirement that the little $n$-disks not overlap.
\begin{proposition}
For any orthogonal representation $V$, $\Sigma_n$ acts freely on $\cc{D}(V)_n$.
\end{proposition}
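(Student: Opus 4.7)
The plan is to exploit the non-overlapping condition in the definition of $\cc{D}_W(V)_n$. If $\sigma \in \Sigma_n$ fixes an $n$-tuple $(d_1, \ldots, d_n)$ of non-overlapping little disks, then $d_{\sigma(i)} = d_i$ for each $i$, so whenever $i$ and $j$ lie in the same $\sigma$-orbit, the disks $d_i$ and $d_j$ must coincide as affine maps. The key observation is that two genuinely coincident little disks necessarily overlap (they share their entire interior), contradicting the defining condition of $\cc{D}_W(V)_n$ unless the orbits are singletons, i.e., $\sigma = \id$.

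Concretely, I would carry this out in three short steps. First, note that in any configuration $(d_1, \ldots, d_n) \in \cc{D}_W(V)_n$, the images of the $d_i$ have pairwise disjoint interiors, and in particular the affine maps $d_i$ and $d_j$ are distinct for $i \neq j$. Second, for any $\sigma \in \Sigma_n$ with $\sigma \cdot (d_1, \ldots, d_n) = (d_1, \ldots, d_n)$, the equality $(d_{\sigma^{-1}(1)}, \ldots, d_{\sigma^{-1}(n)}) = (d_1, \ldots, d_n)$ forces $d_{\sigma^{-1}(i)} = d_i$, hence $\sigma^{-1}(i) = i$ by the previous step. Third, freeness is preserved by the filtered colimit defining $\cc{D}(V)_n = \colim_W \cc{D}_W(V)_n$, since freeness of a group action on each term of a diagram of injections is inherited by the colimit.

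There is essentially no obstacle; the only place one might want to be careful is in checking the notion of non-overlapping. The standard convention is that the interiors of the image disks must be disjoint, and since the images of nonconstant affine maps $D(V) \to D(V)$ have nonempty interior in the underlying affine subspace, two affine maps with the same image can be arranged (under the conventions of the definition, where a little disk is determined by its center and linear part) to be equal precisely when the maps agree. Under these conventions, coincident disks overlap, which is what powers the argument.
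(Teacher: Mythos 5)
Your argument is correct and is exactly the one the paper intends: the proposition is stated as following immediately from the non-overlapping requirement, and your elaboration (a fixed tuple would force two indices to carry the same little disk, which would then overlap; freeness passes to the colimit over $W$) fills in precisely that reasoning. No issues.
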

Thus the only subgroups which could have fixed points are the ``graph subgroups'' considered in \cite{BHNinfty}.
\begin{definition}
A {\defemph{graph subgroups}} of $G\times\Sigma_n$ is a subgroup which intersects $\Sigma_n$ trivially.
\end{definition}
The name comes from the fact that if $\Gamma$ is a graph subgroup of $G\times\Sigma_n$, then there is a subgroup $H\subset G$ and a homomorphism $\phi\colon H\to\Sigma_n$ such that $\Gamma$ is the graph of $\phi$. In particular, to each graph subgroup, associate a subgroup $H$ of $G$ and an $H$-set structure on $\{1, \dots, n\}$. If $H$ is a proper subgroup of $G$, then the analysis of the $\Gamma$ fixed points of $\cc{D}(V)$ is most naturally an $H$-equivariant one, and hence is covered by an induction argument on the subgroups. We can therefore restrict attention to $H=G$.

\begin{lemma}\label{lem:GraphFixedPoints}
Let $\Gamma$ be a graph subgroup of $G\times\Sigma_n$ corresponding to a $G$-set $T$. Then the map taking a disk to its center gives a weak equivalence
\[
\cc{D}(V)_n^{\Gamma} \simeq \Emb^{G}(T,V),
\]
where $\Emb^{G}(T,V)$ is the space of $G$-equivariant embeddings of $T$ into $V$. Moreover, this equivalence is 
\[
W_{G\times\Sigma_n}(\Gamma)\cong \Aut^G(T)
\]
equivariant.
\end{lemma}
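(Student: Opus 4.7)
The plan is to exhibit an explicit homotopy inverse to the center map and use convexity of the space of valid radii to produce the required homotopies. First I would check that the center map lands where claimed. Because $\Gamma$ is the graph of a homomorphism $\phi\colon G\to\Sigma_n$ encoding the $G$-set structure on $T=\{1,\dots,n\}$, a $\Gamma$-fixed tuple $(d_1,\dots,d_n)$ satisfies $g\cdot d_i = d_{\phi(g)(i)}$, and hence on centers $g\cdot c_i = c_{\phi(g)(i)}$. That is, $i\mapsto c_i$ is a $G$-equivariant map $T\to V$, and the non-overlap hypothesis forces it to be an injection. The $\Aut^G(T)$-equivariance of the resulting map $\cc{D}(V)_n^\Gamma\to \Emb^G(T,V)$ is then immediate: both sides carry the $\Aut^G(T)\subset\Sigma_n$ action by permutation of coordinates (on the $\cc{D}(V)_n^\Gamma$ side after using the coset representative $(e,\tau)$ computation identifying $W_{G\times\Sigma_n}(\Gamma)$ with the centralizer $C_{\Sigma_n}(\phi(G))=\Aut^G(T)$), and the center map intertwines them.

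The heart of the proof is the observation that, for any fixed equivariant embedding $\iota\colon T\hookrightarrow V$, the set of radius assignments $(r_1,\dots,r_n)\in(0,\infty)^n$ that (i) are constant on $G$-orbits of $T$ -- forced by $\Gamma$-invariance, since $G$ acts by isometries -- and (ii) satisfy the non-overlap inequalities $|\iota(i)-\iota(j)|\ge r_i+r_j$ for $i\ne j$ is cut out by linear inequalities, hence is convex and nonempty. I would define a continuous section $s\colon\Emb^G(T,V)\to\cc{D}(V)_n^\Gamma$ by sending $\iota$ to the configuration of disks at the points $\iota(i)$ of uniform radius $\delta(\iota)=\tfrac{1}{3}\min_{i\ne j}|\iota(i)-\iota(j)|$; this is manifestly continuous, lands in $\Gamma$-fixed points (uniform radii are constant on any partition of $T$), and satisfies $\mathrm{center}\circ s=\mathrm{id}$. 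Going the other way, the composite $s\circ\mathrm{center}$ is connected to the identity by the straight-line homotopy in radii inside the convex fiber over each point of $\Emb^G(T,V)$, giving a continuous, $\Gamma$-fixed homotopy precisely because of fiberwise convexity.

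It remains to pass to the colimit: both $\cc{D}(V)_n^\Gamma$ and $\Emb^G(T,V)$ are built as sequential colimits along closed inclusions indexed by finite-dimensional $G$-subrepresentations $W\subset V$, and because the section $s$ and the straight-line homotopy are natural under extending $W\subset W'$, the weak equivalence at each finite stage assembles to one on the colimit. I expect the main bookkeeping hurdle to be simultaneously tracking the continuous dependence on the base, the $\Gamma$-invariance of the radii, and the $\Aut^G(T)$-equivariance of both $s$ and the homotopy; these reduce to choosing the radius function $\delta$ to be $\Aut^G(T)$-invariant (which the min-distance formula visibly is) and noting that straight-line interpolation commutes with the permutation action on radii. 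A cleaner alternative that I might use instead would be to establish that the center map is a Serre fibration with fibers the convex spaces described above, and conclude weak equivalence directly from contractibility of fibers.
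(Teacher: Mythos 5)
Your proof is correct and follows essentially the same route as the paper: both rest on the disk-to-center map being a ($G\times\Sigma_n$-equivariant) weak equivalence onto the configuration space, the paper citing this and passing to $\Gamma$-fixed points while you supply the standard section-plus-convex-fiber (radius-shrinking) argument directly on the $\Gamma$-fixed points, together with the same identification of $W_{G\times\Sigma_n}(\Gamma)$ with $\Aut^G(T)$. The only point you (and the paper) elide is that centers of little disks lie in $D(V)$ rather than all of $V$, so one should rescale to identify $\Emb^G(T,D(V))\simeq\Emb^G(T,V)$ and cap your radius function $\delta$ by the distance to the boundary of $D(V)$.
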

\begin{proof}
The map sending a little disk to its center establishes a $G\times\Sigma_n$-weak equivalence
\[
\cc{D}(V)_n\simeq \Emb\big(\{1,\dots,n\},V\big),
\]
where $G\times\Sigma_n$ acts on the latter via the $\Sigma_n$-action on the source and the $G$-action on the target. When restricted to $\Gamma$, this becomes the conjugation action on $\Emb(T,V)$. The result is now obvious.
\end{proof}

Since we are considering only finite $G$-sets, these spaces of embeddings are a kind of equivariant configuration space.

\begin{definition}
Let $n\geq 1$, and let $X$ be a $G$-space. Define the {\defemph{configuration space of $n G/H$}} in $X$ to be
\[
\Conf_{nG/H}(X):=\big\{(x_1,\dots,x_n) \mid \forall i\Stab(x_i)=H\text{ and } \forall i\neq j, \forall g\in G, x_i\neq gx_j\big\},
\]
topologized as a subspace of $X^n$.
\end{definition}

Note that by construction
\[
\Conf_{nG/H}(X)\subset (X^H)^n.
\]
The latter has a canonical action of $W_G(H)\wr\Sigma_n$ extending the coordinate actions of $W_G(H)$. By construction, $\Conf_{nG/H}(X)$ is an equivariant subspace. With this observation, the following is immediate.

\begin{proposition}
If $X$ is any space, then we have a homeomorphism
\[
\Emb^G(nG/H,X)\cong \Conf_{nG/H}(X)
\]
given by choosing an ordering of the summands and evaluating at the cosets of the identity for each summand. This is equivariant for
\[
\Aut^G(nG/H)\cong W_G(H)\wr\Sigma_n.
\]
\end{proposition}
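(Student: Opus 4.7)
The plan is to identify a $G$-equivariant embedding $f \colon nG/H \hookrightarrow X$ with the tuple of images of the identity cosets in each of its $n$ summands, and to check that this correspondence is continuous, a bijection, and equivariant.

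First I would unpack what a $G$-equivariant map $G/H \to X$ is: by the orbit-point correspondence, such a map is uniquely determined by the image $x$ of the coset $eH$, and any $x \in X^H$ defines a well-defined $G$-map $gH \mapsto gx$. Thus a $G$-map $nG/H \to X$ is determined by an ordered tuple $(x_1, \dots, x_n) \in (X^H)^n$, once we fix an ordering of the $n$ summands. Next I would translate the embedding condition. Injectivity on a single summand $G/H \to X$ forces $\Stab(x_i) = H$, not merely $\Stab(x_i) \supseteq H$, since any strict containment would collapse the orbit. Injectivity across distinct summands forces the orbits $Gx_i$ and $Gx_j$ to be disjoint for $i \neq j$, which is exactly the condition $x_i \neq g x_j$ for all $g \in G$. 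Since $nG/H$ is a finite discrete space, any continuous injection is automatically an embedding. These conditions reassemble exactly into the definition of $\Conf_{nG/H}(X)$.

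For continuity of the bijection in each direction, I would note that both sides sit naturally inside $X^n$: on the embedding side, via evaluation at the chosen generators; on the configuration side, by definition. The evaluation map is continuous, and its inverse (sending a configuration $(x_1,\dots,x_n)$ to the unique $G$-map whose values at the chosen generators are the $x_i$) is continuous because the source is a finite discrete $G$-set, so the compact-open topology on $\Map^G(nG/H, X)$ reduces to the product topology on $(X^H)^n$.

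Finally, I would verify the equivariance under $\Aut^G(nG/H) \cong W_G(H) \wr \Sigma_n$. The isomorphism $\Aut^G(G/H) \cong W_G(H)$ sends $n H \in N_G(H)/H$ to the automorphism $gH \mapsto gn^{-1}H$, and an automorphism of $nG/H$ is a permutation of summands composed with an automorphism on each factor. Under evaluation at the identity cosets, a permutation $\sigma \in \Sigma_n$ corresponds to permuting coordinates of $(x_1,\dots,x_n)$, while an element of $W_G(H)$ on the $i$-th summand sends $x_i$ to its image under the corresponding Weyl action on $X^H$. This matches precisely the canonical $W_G(H) \wr \Sigma_n$-action on $\Conf_{nG/H}(X) \subset (X^H)^n$. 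The only genuinely non-formal point to double-check is the compatibility of the orientation convention on $\Aut^G(G/H)$ with the action on $X^H$, but this is a routine sign-of-conjugation check rather than a real obstacle.
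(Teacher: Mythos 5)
Your argument is correct and is exactly the unwinding the paper has in mind: the paper gives no proof, simply declaring the proposition ``immediate'' from the preceding observation that $\Conf_{nG/H}(X)\subset (X^H)^n$ carries the canonical $W_G(H)\wr\Sigma_n$-action. Your spelling-out of the orbit--point correspondence, the translation of injectivity into the stabilizer and orbit-disjointness conditions, and the topological identification via the compact-open topology fills in precisely the routine details the paper omits.
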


When $X$ is a representation, then these are simpler still: they are hyperplane complements.

\begin{definition}
Let $H\subset G$ be a subgroup and $V$ a representation of $G$. 

Let 
\[
V^{\partial H}:=\bigcup_{H\subset K} V^K
\]
be the $H$-relative singular set of $V$, and let 
\[
\mathring{V}^H:=V^H-V^{\partial H}.
\]
\end{definition}

%There is an orbifold interpretation of these objects that is helpful in what follows. The Weyl group $W_G(H)$ acts on $V^H$, and by construction, it acts freely on $\mathring{V}^H$.

\begin{proposition}
The configuration space of $n G/H$ in a representation $V$ is a hyperplane complement:
\begin{align*}
\Conf_{nG/H}(V)&=\big\{(v_1,\dots,v_n)\in \mathring{V}^H \mid \forall i\neq j, \forall gH\in W_G(H), v_j\neq g v_j\big\} \\
&=(V^H-V^{\partial H})^n - \bigcup_{i\neq j, gH\in W_G(H)} W_{i,j,gH}, 
\end{align*}
where 
\[
W_{i,j,gH}=\big\{(v_1,\dots, v_n) \mid v_i=gv_j\big\}\subset (V^H)^n.
\]
\end{proposition}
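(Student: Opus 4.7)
The plan is to verify the set equality by unpacking the two defining conditions on an $n$-tuple $(v_1,\dots,v_n) \in V^n$: that each $v_i$ has stabilizer exactly $H$, and that the $G$-orbits $Gv_i$ and $Gv_j$ are disjoint for $i\neq j$.

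First I would handle the stabilizer condition. A point $v \in V$ has $\Stab(v) = H$ if and only if $v$ is fixed by every element of $H$ but by no element of any strictly larger subgroup. Since $V^{\partial H} = \bigcup_{H \subsetneq K} V^K$, this is precisely the condition $v \in V^H - V^{\partial H} = \mathring{V}^H$. Thus $\Conf_{nG/H}(V) \subset (\mathring{V}^H)^n$, and it remains only to translate the disjointness condition inside this subspace.

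Next I would reduce the quantifier ``$\forall g \in G$'' in the orbit-disjointness condition to ``$\forall gH \in W_G(H)$.'' Suppose $v_i, v_j \in \mathring{V}^H$ satisfy $v_i = g v_j$ for some $g \in G$. Taking stabilizers gives $H = \Stab(v_i) = g\Stab(v_j)g^{-1} = gHg^{-1}$, so $g \in N_G(H)$; equivalently, no collision is ever produced by a $g$ outside $N_G(H)$. Moreover, because $v_j$ is fixed by $H$, the element $gv_j$ depends only on the coset $gH$, so it is enough to let $gH$ range over $N_G(H)/H = W_G(H)$. Therefore the condition ``$v_i \neq gv_j$ for all $g \in G$ and all $i \neq j$'' is equivalent to ``$(v_1,\dots,v_n) \notin W_{i,j,gH}$ for all $i \neq j$ and all $gH \in W_G(H)$,'' yielding the claimed hyperplane complement description.

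I do not foresee any real obstacle: the statement is essentially a repackaging of the definitions, with the only substantive content being the stabilizer calculation in the second step that forces $g \in N_G(H)$ and lets the condition descend to cosets in $W_G(H)$.
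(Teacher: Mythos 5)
Your proof is correct and follows the same route as the paper, which simply observes that $\mathring{V}^H$ is exactly the set of points with stabilizer $H$ and that the rest follows from the definition. You usefully fill in the one detail the paper leaves implicit — that a collision $v_i = gv_j$ between points with stabilizer $H$ forces $g \in N_G(H)$ and depends only on the coset $gH$, so the quantifier over $G$ descends to $W_G(H)$.
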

\begin{proof}
The subspace $\mathring{V}^H$ is the subspace of $V$ consisting of those points with stabilizer $H$. It itself is a hyperplane complement, so its $n$-fold Cartesian power is. The result follows immediately from the definition.
\end{proof}

Putting this all together, we deduce the graph fixed points for an arbitrary graph subgroup.

\begin{theorem}\label{thm:TopologyofEmbeddings}
Let $T=n_1 G/H_1\amalg\dots \amalg n_k G/H_k$, where if $i\neq j$, then $H_i$ and $H_j$ are not conjugate. Then we have an $\Aut^G(T)$-equivariant homeomorphism
\[
\Emb^G(T,V)\cong \prod_{i=1}^{k} \Conf_{n_iG/H_i}(V).
\]
In particular, this is a hyperplane complement.
\end{theorem}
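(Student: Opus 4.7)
The plan is to reduce to the single-orbit-type case handled in the preceding proposition. An equivariant map out of a coproduct is the same as an equivariant map out of each summand, and a $G$-equivariant map $nG/H \to V$ is determined by its values on a set of orbit representatives. Choosing the identity cosets as representatives gives a natural identification
\[
\Map^G(T, V) \;\cong\; \prod_{i=1}^k (V^{H_i})^{n_i}.
\]
The embedding condition now splits into two parts: (a) an \emph{intra-type} condition that the chosen tuple in $(V^{H_i})^{n_i}$ defines an embedding of $n_i G/H_i$, i.e.\ lies in $\Conf_{n_i G/H_i}(V)$; and (b) an \emph{inter-type} condition that the images of $n_i G/H_i$ and $n_j G/H_j$ are disjoint when $i \neq j$.

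The key point is that condition (b) is automatic given the non-conjugacy hypothesis. A point in the image of $n_i G/H_i$ under an equivariant embedding necessarily has stabilizer exactly $H_i$ (stabilizers can only grow under an equivariant map, and injectivity within a single orbit forces it to be exactly $H_i$); its $G$-orbit therefore consists of points with stabilizers conjugate to $H_i$. Since $H_i$ is not conjugate to $H_j$, no such orbit can meet the image of $n_j G/H_j$. Consequently the embedding condition cuts out exactly the product $\prod_i \Conf_{n_i G/H_i}(V)$, yielding the claimed homeomorphism.

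For equivariance, I would observe that an automorphism of $T$ cannot send a $G/H_i$-summand to a $G/H_j$-summand when $H_i$ and $H_j$ are non-conjugate (the stabilizers would have to match up to conjugacy). Hence
\[
\Aut^G(T) \;\cong\; \prod_{i=1}^k \Aut^G(n_i G/H_i) \;\cong\; \prod_{i=1}^k W_G(H_i) \wr \Sigma_{n_i},
\]
and the identification above is manifestly equivariant for this product action, since it was built summand-by-summand. The hyperplane complement claim then follows formally: each factor $\Conf_{n_i G/H_i}(V)$ is a hyperplane complement in $(V^{H_i})^{n_i}$ by the preceding proposition, and a finite product of hyperplane complements sits inside the product ambient space as a hyperplane complement.

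The only genuine content here, and the place I would be most careful, is the intra/inter-type decomposition of the embedding condition: one must verify both that equivariance forces stabilizers of images to be exactly the $H_i$ (not merely to contain them), so that images from different types truly cannot collide, and that this same fact rules out any automorphism of $T$ mixing types. Everything else is bookkeeping aggregating the single-type case.
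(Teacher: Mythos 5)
Your proof is correct and follows essentially the same route as the paper: the paper's entire argument is the observation that ``embeddings for non-conjugate stabilizers can never coincide \dots\ since an embedding preserves stabilizers,'' and your intra/inter-type decomposition simply fleshes out that one line (correctly, including the point that equivariance plus injectivity forces the stabilizer of the image of $eH_i$ to be exactly $H_i$, not merely to contain it). Nothing further is needed.
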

\begin{proof}
It only remains to prove that the embeddings for non-conjugate stabilizers can never coincide (hence we get a product decomposition). However, this is obvious, since an embedding preserves stabilizers.
\end{proof}

\begin{corollary}\label{cor:EmbModule}
If $T$ is any finite $G$-set such that $T^{G}=\emptyset$, then
\[
\Emb^{G}(T,V)\simeq \Emb^{G}(T\amalg \ast, V).
\]
\end{corollary}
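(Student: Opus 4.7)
The plan is to deduce this corollary directly from Theorem \ref{thm:TopologyofEmbeddings} by exploiting the hypothesis $T^{G}=\emptyset$. The whole content of that hypothesis is that, when we decompose $T$ into orbits $T=n_{1}G/H_{1}\amalg\cdots\amalg n_{k}G/H_{k}$ with the $H_{i}$ pairwise non-conjugate, none of the $H_{i}$ is equal to $G$ itself. Consequently, adjoining a single fixed point $\ast = G/G$ introduces a genuinely new conjugacy class of stabilizer, and the product decomposition of Theorem \ref{thm:TopologyofEmbeddings} simply acquires one extra factor. More precisely, I will argue
\[
\Emb^{G}(T\amalg\ast, V) \cong \Emb^{G}(T, V)\times \Conf_{G/G}(V),
\]
with the second factor a vector space.

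For the first step, I invoke Theorem \ref{thm:TopologyofEmbeddings} with input $T\amalg \ast$, using the orbit decomposition $n_{1}G/H_{1}\amalg\cdots\amalg n_{k}G/H_{k}\amalg G/G$. Because $T^{G}=\emptyset$, the subgroup $G$ is not conjugate to any of the $H_{i}$, so the hypothesis of the theorem is satisfied and we obtain the displayed product decomposition.

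For the second step, I analyze $\Conf_{G/G}(V)$. By the definition of $\mathring{V}^{H}$, we have $\Conf_{G/G}(V)=\mathring V^{G}=V^{G}-V^{\partial G}$. Since no subgroup properly contains $G$ in $G$, the singular set $V^{\partial G}$ is empty, so $\Conf_{G/G}(V)=V^{G}$. This is a real vector space, hence contractible.

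The final step is to combine the two observations: the projection
\[
\Emb^{G}(T\amalg\ast,V)\cong \Emb^{G}(T,V)\times V^{G}\longrightarrow \Emb^{G}(T,V)
\]
is a homotopy equivalence with contractible fiber. There is no real obstacle here; the only place a subtlety could have crept in is the requirement in Theorem \ref{thm:TopologyofEmbeddings} that distinct orbit types have non-conjugate stabilizers, and this is precisely what the hypothesis $T^{G}=\emptyset$ guarantees. (Geometrically, the assertion is the intuitively clear fact that once all other orbits have stabilizer strictly smaller than $G$, a fixed point can never collide with any of their components, so placing the new $G$-fixed point of $\ast$ into $V$ is an unobstructed contractible choice.)
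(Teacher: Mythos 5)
Your proof is correct and follows essentially the same route as the paper: apply Theorem~\ref{thm:TopologyofEmbeddings} to $T\amalg\ast$, note that $T^{G}=\emptyset$ guarantees the fixed orbit contributes a separate factor $\Conf_{G/G}(V)=\mathring{V}^{G}=V^{G}$, and conclude. If anything, you are slightly more careful than the paper's one-line proof, which only observes that $\Conf_{G/G}(V)$ is non-empty, whereas the homotopy equivalence really needs the contractibility of the vector space $V^{G}$ that you spell out.
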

\begin{proof}
The origin is always fixed in $V$, thus $\Conf_{G/G}(V)$ is always non-empty.
\end{proof}
\begin{remark}
If $|T|=n$, then choose an ordering of the points of $T$ and hence a non-equivariant identification of $T$ and $\{1,\dots n\}$. Including $\ast$ as the $(n+1)$st point gives a non-equivariant identification of $T\amalg \ast$ with $\{1,\dots, n+1\}$. In this case, the homotopy equivalence in Corollary~\ref{cor:EmbModule} can be realized by the inclusion of the unit in the $(n+1)$st coordinate.
\end{remark}

\subsubsection{Algebras over little disks}
Lemma~\ref{lem:GraphFixedPoints} is the key result that underpins our analysis.

\begin{theorem}[Compare {\cite[Lemma 6.6]{BHNinfty}}]\label{thm:DiskAlgebras}
Let $X$ be a $\cc{D}(V)$-algebra in spaces, and let $T$ be a finite $H$-set. Then the space $\Emb^H(T,V)$ encodes $H$-equivariant multiplications
\[
\Map(T,X)\to X.
\]
\end{theorem}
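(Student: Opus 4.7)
The plan is to reduce the statement to a direct application of Lemma~\ref{lem:GraphFixedPoints} together with the operadic action map, via a choice of ordering that turns the $H$-set $T$ into a graph subgroup of $G\times \Sigma_{n}$ where $n=|T|$.

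First I would choose any bijection $T\cong \{1,\dots,n\}$. This bijection is not $H$-equivariant in general, but the $H$-action on $T$ is encoded by the resulting homomorphism $\phi\colon H\to \Sigma_{n}$, whose graph $\Gamma_{\phi}\subset H\times \Sigma_{n}$ is the corresponding graph subgroup (viewed inside $G\times \Sigma_{n}$ by the inclusion $H\hookrightarrow G$). Under this identification, I would verify the key bookkeeping fact that $\Map(T,X)$, as an $H$-space, is homeomorphic to $X^{n}$ equipped with the $\Gamma_{\phi}$-restricted action: an element $h\in H$ acts by the diagonal action on $X^{n}$ combined with the permutation $\phi(h)$. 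In particular, $\Map(T,X)\cong X^{n}$ as a space, and the two induced $H$-actions agree.

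Now I would use the operad structure on $\ccD(V)$. The structure map
\[
\theta_{n}\colon \ccD(V)_{n}\times X^{n}\to X
\]
is $G\times \Sigma_{n}$-equivariant, where $\Sigma_{n}$ acts trivially on the target. Restricting along $\Gamma_{\phi}\hookrightarrow G\times \Sigma_{n}$, we obtain an $H$-equivariant map $\ccD(V)_{n}\times \Map(T,X)\to X$ (the target carrying only the $H$-action through the projection $\Gamma_{\phi}\to H$). Adjointing and taking $H$-fixed points yields
\[
\bigl(\ccD(V)_{n}\bigr)^{\Gamma_{\phi}}\to \Map^{H}\bigl(\Map(T,X),X\bigr),
\]
and Lemma~\ref{lem:GraphFixedPoints} identifies the source with $\Emb^{H}(T,V)$. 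This is exactly the asserted parameterization.

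Finally I would check that the construction does not depend on the chosen ordering. A different bijection $T\cong\{1,\dots,n\}$ differs by an element of $\Sigma_{n}$, which conjugates $\Gamma_{\phi}$ to another graph subgroup; both the operadic map $\theta_{n}$ and the identification $\Map(T,X)\cong X^{n}$ transform compatibly under this $\Sigma_{n}$-action, so the resulting map from $\Emb^{H}(T,V)$ is canonical. The only mild obstacle in this argument is really notational: one has to be careful about the direction of the permutation action on $\Map(T,X)\cong X^{T}$ so that it genuinely matches the graph action, but once that matches up, Lemma~\ref{lem:GraphFixedPoints} and the adjunction do all the work.
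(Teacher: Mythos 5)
Your argument is correct and follows essentially the same route as the paper: choose an ordering of $T$ to produce the graph subgroup, invoke Lemma~\ref{lem:GraphFixedPoints} to identify the $\Gamma$-fixed points of $\ccD(V)_{n}$ with $\Emb^{H}(T,V)$, and feed a point of that space into the operadic structure map before adjointing to get the $H$-equivariant multiplication $\Map(T,X)\to X$. The paper packages the last step by inducing up to $G$ and using the induction--restriction adjunction rather than restricting along $\Gamma_{\phi}$ directly, but this is only a difference in bookkeeping, not in substance.
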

\begin{proof}
Let $\Gamma$ be the graph subgroup associated to an ordering of $T$. Lemma~\ref{lem:GraphFixedPoints} shows that we have an equivalence
\[
\Map^{G\times\Sigma_n}\big(G\times\Sigma_n/\Gamma_T, \ccD(V)_n\big)\simeq \Emb^H(T,V),
\]
and that this equivalence is compatible with the ($H$-)Weyl action on the source and the $\Aut^H(T)$-action on the target. If $\Emb^H(T,V)$ is empty, then we have nothing to prove, so assume that this is non-empty. In this case, each point in $\Emb^H(T,V)$ gives a $G$-equivariant map
\[
G\times_H \Map(T,X)\cong G\times_H\big((H\times\Sigma_n/\Gamma_T)\times_{\Sigma_n} X^{\times n}\big)\to \ccD(V)_n\times_{\Sigma_n} X^n\to X.
\]
The maps in the theorem are the adjoint.
\end{proof}

Using Corollary~\ref{cor:EmbModule}, we get a kind of module structure on a $\ccD(V)$-algebra.
\begin{corollary}\label{cor:Module}
If $X$ is a $\ccD(V)$-module and $T$ is a finite $G$-set such that $T^{G}=\emptyset$, then $\Emb^{G}(T,V)$ also parameterizes maps
\[
\Map(T,X)\times X\to X.
\]
\end{corollary}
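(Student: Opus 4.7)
The plan is to reduce the statement to Theorem~\ref{thm:DiskAlgebras} by formally adjoining a fixed basepoint to $T$. First I would apply Corollary~\ref{cor:EmbModule} to obtain the weak equivalence
\[
\Emb^{G}(T,V)\simeq \Emb^{G}(T\amalg \ast,V),
\]
where $\ast$ is a single $G$-fixed point (i.e., the orbit $G/G$). Then I would feed the $G$-set $T\amalg \ast$ directly into Theorem~\ref{thm:DiskAlgebras}, which produces a $G$-equivariant multiplication
\[
\Emb^{G}(T\amalg\ast,V)\times \Map(T\amalg\ast,X)\to X.
\]
Finally, I would rewrite the mapping space using the coproduct decomposition, since a map out of $T\amalg \ast$ is the same data as a map out of $T$ together with a choice of element of $X$:
\[
\Map(T\amalg\ast,X)\cong \Map(T,X)\times X.
\]
Composing these three identifications yields the asserted map $\Emb^{G}(T,V)\times\Map(T,X)\times X\to X$, which is the adjoint form of the claim.

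The one thing to verify is that the equivalence of Corollary~\ref{cor:EmbModule} is compatible with the ambient automorphism actions, so that what Theorem~\ref{thm:DiskAlgebras} builds really is the multiplication associated to $T$. This is where the hypothesis $T^{G}=\emptyset$ earns its keep. By Theorem~\ref{thm:TopologyofEmbeddings}, the decomposition of $T\amalg\ast$ into isotypic pieces places the new point $\ast$ in its own $G/G$-summand distinct from every summand of $T$, and the factor $\Aut^{G}(G/G)$ is trivial. Hence $\Aut^{G}(T\amalg\ast)\cong \Aut^{G}(T)$, and the equivalence is $\Aut^{G}(T)$-equivariant as required.

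I do not anticipate a genuine obstacle: all of the real work is absorbed into Corollary~\ref{cor:EmbModule} (which in turn uses that the origin of $V$ supplies a canonical point of $\Conf_{G/G}(V)$). The proof is essentially a one-line assembly of prior results, and the module interpretation is obtained by freezing the final $X$-coordinate, which corresponds under adjunction to plugging the unit-like element $0\in V$ into the $(n+1)$st little disk slot, exactly as in the remark following Corollary~\ref{cor:EmbModule}.
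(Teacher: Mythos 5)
Your proposal is correct and is exactly the argument the paper intends: the corollary is stated without proof as an immediate combination of Corollary~\ref{cor:EmbModule} (identifying $\Emb^{G}(T,V)\simeq\Emb^{G}(T\amalg\ast,V)$ via the origin of $V$) with Theorem~\ref{thm:DiskAlgebras} applied to $T\amalg\ast$, together with $\Map(T\amalg\ast,X)\cong\Map(T,X)\times X$. Your extra check that $T^{G}=\emptyset$ forces $\ast$ into its own isotypic summand with trivial automorphisms, so that $\Aut^{G}(T\amalg\ast)\cong\Aut^{G}(T)$, is a worthwhile detail the paper leaves implicit.
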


There are obvious coherence questions which arise here, and the answer is encoded in Elmendorf's Theorem allowing us to rebuild the homotopy type of a $G$-space out of the diagram of fixed points. This gives us a way to understand the homotopy type of $\ccD(V)_n\times_{\Sigma_n} X^{\times n}$ out of the data of Lemma~\ref{lem:GraphFixedPoints}.

\begin{definition}
Let $\Orb^{G}$ denote the orbit category of $G$. 
Let 
\[
J\colon \Orb^{G}\to\Top^{G}
\]
denote the natural embedding which sends an orbit $G/H$ to $G/H$ viewed as a $G$-space.
If $X$ is a $G$-space, let
\[
X^{(-)}\colon (\Orb^{G})^{op}\to \Top
\]
denote the functor which sends $G/H$ to $\Map^{G}(G/H,X)\cong X^{H}$.
\end{definition}

We will also take advantage of a simplification from the freeness of the $\Sigma_{n}$-action on $\ccD(V)_{n}$. This gives a natural family of subgroups and hence subcategory of the orbit category.

\begin{definition}
If $V$ is an orthogonal representation of $G$, let $\cF^{n}_{V}$ denote the family of subgroups $\Gamma\subset G\times\Sigma_{n}$ such that $\ccD(V)^{\Gamma}_{n}$ is non-empty. Let $\cF^{n}_{V}$ also denote the corresponding sieve in the orbit category.
\end{definition}

\begin{theorem}\label{thm:AnalysisofFreeDisks}
If $X$ is a $G$-CW complex, then we have a natural weak equivalence
\[
\big|B_{\bullet}\big(\ccD(V)_{n}^{(-)}, \cF^{n}_{V}, J|_{\cF^{n}_{V}}\times_{\Sigma_{n}} X^{n}\big)\big|\simeq \ccD(V)_{n}\times_{\Sigma_{n}}X^{n},
\]
where the left-hand side is the geometric realization of the $2$-sided bar construction.
\end{theorem}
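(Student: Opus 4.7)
The plan is to deduce this from Elmendorf's theorem for $G\times\Sigma_n$-spaces, combined with the freeness of the $\Sigma_n$-action on $\ccD(V)_n$, and the fact that geometric realization and balanced products commute with the two-sided bar construction.

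First, I would start with the full orbit category $\cO^{G\times\Sigma_n}$ and apply Elmendorf's theorem in the form of the standard bar-construction resolution. For any $(G\times\Sigma_n)$-CW complex $Y$ there is a natural weak equivalence
\[
\big|B_\bullet\big(Y^{(-)}, \cO^{G\times\Sigma_n}, J^{G\times\Sigma_n}\big)\big|\simeq Y,
\]
realizing $Y$ as the homotopy colimit of its fixed-point diagram over $\cO^{G\times\Sigma_n}$. Apply this to $Y=\ccD(V)_n$, which is a $(G\times\Sigma_n)$-CW complex by construction.

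Next I would restrict the indexing to the sieve $\cF^n_V$. Since $\Sigma_n$ acts freely on $\ccD(V)_n$, every subgroup $\Gamma\subset G\times\Sigma_n$ with $\ccD(V)_n^\Gamma\neq\emptyset$ is a graph subgroup, and by definition $\cF^n_V$ collects precisely those $\Gamma$ for which fixed points are non-empty. At every simplicial level of the bar construction, a summand indexed by a chain $(G\times\Sigma_n)/\Gamma_0\to\dots\to(G\times\Sigma_n)/\Gamma_p$ contains the factor $\ccD(V)_n^{\Gamma_0}$; hence any chain leaving $\cF^n_V$ at its initial vertex contributes the empty space. Since $\cF^n_V$ is a sieve, the subcategory inclusion $\cF^n_V\hookrightarrow\cO^{G\times\Sigma_n}$ is fully faithful and downward closed, so removing all chains not lying entirely in $\cF^n_V$ does not change the realization. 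This yields a weak equivalence
\[
\big|B_\bullet\big(\ccD(V)_n^{(-)}, \cF^n_V, J|_{\cF^n_V}\big)\big|\simeq \ccD(V)_n
\]
as $(G\times\Sigma_n)$-spaces.

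Finally, I would push $\times_{\Sigma_n}X^n$ through the realization. The functor $(-)\times_{\Sigma_n}X^n\colon (G\times\Sigma_n)\text{-}\Top\to G\text{-}\Top$ is a left adjoint (to $\Map(X^n,-)$ with its residual $\Sigma_n$-action viewed as a $G\times\Sigma_n$-space) and therefore preserves colimits and geometric realizations of simplicial spaces. Consequently
\[
\big|B_\bullet\big(\ccD(V)_n^{(-)}, \cF^n_V, J|_{\cF^n_V}\big)\big|\times_{\Sigma_n}X^n\cong \big|B_\bullet\big(\ccD(V)_n^{(-)}, \cF^n_V, J|_{\cF^n_V}\times_{\Sigma_n}X^n\big)\big|,
\]
and combining with the previous step gives the desired weak equivalence. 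The main obstacle I expect is bookkeeping: verifying that the restriction from $\cO^{G\times\Sigma_n}$ to the sieve $\cF^n_V$ really is harmless at the level of geometric realizations (rather than just on each simplicial degree), and that $\times_{\Sigma_n}X^n$ interacts correctly with the enriched bar construction when $X$ is only assumed to be a $G$-CW complex, so that cofibrancy hypotheses needed for the equivalences to hold on the nose (and not merely after suitable replacement) are in place.
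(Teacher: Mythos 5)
Your proposal follows essentially the same route as the paper: Elmendorf's theorem gives the bar resolution of $\ccD(V)_{n}$ over the full orbit category of $G\times\Sigma_{n}$, the sieve condition on $\cF^{n}_{V}$ lets you discard the chains contributing empty fixed-point spaces without changing the (levelwise, hence realized) simplicial space, and $(-)\times_{\Sigma_{n}}X^{n}$ commutes with geometric realization as a left adjoint. The argument is correct and matches the paper's proof step for step.
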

\begin{proof}
Elmendorf's Theorem \cite{ElmendorfTheorem} shows that we have a natural $G\times\Sigma_{n}$-weak equivalence
\[
\big|B_{{\bullet}}\big(\ccD(V)_{n}^{(-)}, \Orb^{G\times\Sigma_{n}}, J\big)\big|\simeq \ccD(V)_{n}.
\]
Since $\ccD(V)_{n}^{\Gamma}$ is by definition non-empty only for those $\Gamma\in\cF^{n}_{V}$, and since $\cF^{n}_{V}$ is a sieve, we have a simplicial homeomorphism of simplicial spaces
\[
B_{\bullet}\big(\ccD(V)_{n}^{(-)}, \Orb^{G\times\Sigma_{n}},J\big)\cong B_{\bullet}\big(\ccD(V)_{n}^{(-)}, \cF^{n}_{V}, J\big). 
\]
The result follows from passing the product with $X$ past the geometric realization and commuting the $\Sigma_{n}$-orbits passed the geometric realization. 
\end{proof}

\begin{remark}
The pieces in the two-sided bar construction are ones already encountered in Theorem~\ref{thm:DiskAlgebras}: for some $\Gamma_{T}\in\cF_{V}^{n}$, Lemma~\ref{lem:GraphFixedPoints} identifies $\ccD(V)_{n}^{\Gamma_{T}}$ and 
\[
\big(G\times\Sigma_{n}/\Gamma_{T}\big)\times_{\Sigma_{n}}X^{n}\simeq \Map(T,X).
\]
Theorem~\ref{thm:AnalysisofFreeDisks} shows that the free $\ccD(V)$-algebra on $X$ is homotopically built out of exactly this data.
\end{remark}

\begin{corollary}
Let $K\subset H\subset G$, and let $T$ be a finite $H$-set and $T'$ a finite $K$-set such that $i_K^\ast T\cong T'$. Then the inclusion
\[
\Map^H(T,V)\hookrightarrow \Map^K(T',V)
\]
describes the restriction map on the operadic multiplications.
\end{corollary}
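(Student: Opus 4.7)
The plan is to trace both sides through Lemma~\ref{lem:GraphFixedPoints} and the construction of Theorem~\ref{thm:DiskAlgebras}. Fix an underlying ordering of the $n = |T|$ points of $T$. This ordering realizes the $H$-set $T$ as the graph of a homomorphism $\phi\colon H\to \Sigma_n$, with graph subgroup $\Gamma_T\subset H\times \Sigma_n$, and simultaneously realizes $T'\cong i_K^\ast T$ via $\phi|_K$, whose graph is
\[
\Gamma_{T'} = \Gamma_T \cap (K\times\Sigma_n).
\]

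Because $\Gamma_{T'}\subset\Gamma_T$, passage to fixed points gives an inclusion $\ccD(V)_n^{\Gamma_T}\hookrightarrow \ccD(V)_n^{\Gamma_{T'}}$. Applying the center-of-disk equivalence of Lemma~\ref{lem:GraphFixedPoints} to both sides identifies this topologically with the inclusion $\Emb^H(T,V)\hookrightarrow \Emb^K(T',V)$, which is simply the observation that an $H$-equivariant embedding is a fortiori a $K$-equivariant embedding of the underlying $K$-set $T'$.

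To verify that this inclusion implements restriction of operations, I would follow the proof of Theorem~\ref{thm:DiskAlgebras}. A point $d\in\ccD(V)_n^{\Gamma_T}$ produces the $H$-multiplication as the adjoint of
\[
G\times_H\Map(T,X)\;\cong\;(G\times\Sigma_n/\Gamma_T)\times_{\Sigma_n}X^n\;\longrightarrow\;\ccD(V)_n\times_{\Sigma_n}X^n\;\longrightarrow\;X,
\]
where the first arrow picks out $d$. The same $d$, viewed via $\Gamma_{T'}$, produces the analogous diagram with $(H,T)$ replaced by $(K,T')$, and the canonical surjection
\[
G\times\Sigma_n/\Gamma_{T'}\;\twoheadrightarrow\; G\times\Sigma_n/\Gamma_T
\]
coming from $\Gamma_{T'}\subset\Gamma_T$ makes the two diagrams commute. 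Passing to adjoints then identifies the $K$-equivariant operation attached to $d$ with the restriction along $K\subset H$ of the $H$-equivariant one.

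The main obstacle, though modest, is bookkeeping: one must pin down that the abstract isomorphism $i_K^\ast T\cong T'$ is compatibly realized by the chosen ordering on both sides, and then promote the pointwise comparison to a statement about the full operadic structure. The cleanest route is probably through Theorem~\ref{thm:AnalysisofFreeDisks}: the families $\cF^n_V$ behave naturally under the passage from $H\times\Sigma_n$ to $K\times\Sigma_n$, so running the comparison at the level of two-sided bar constructions turns restriction of algebras into restriction of the indexing family, and the desired compatibility falls out.
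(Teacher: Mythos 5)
Your argument is correct and is exactly the unpacking the paper intends: the paper states this corollary without proof, treating it as immediate from Lemma~\ref{lem:GraphFixedPoints} and the construction in Theorem~\ref{thm:DiskAlgebras}, and your identification of $\Gamma_{T'}=\Gamma_T\cap(K\times\Sigma_n)$ together with the commuting square induced by $G\times\Sigma_n/\Gamma_{T'}\twoheadrightarrow G\times\Sigma_n/\Gamma_T$ is precisely the omitted verification. (You also correctly read the displayed $\Map^H(T,V)$ as the embedding space $\Emb^H(T,V)$ from the earlier results.)
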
 

\subsubsection{Basic consequences}

We first make a simple observation which olds for all algebras over all little disks operads (including for the absurd case of $V=0$).

\begin{proposition}\label{prop:Pointed}
For any $V$ and for any $\ccD(V)$-algebra $X$, we have a canonical $G$-fixed basepoint $e\in X$.
\end{proposition}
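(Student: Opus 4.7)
The plan is essentially to unpack the unit of the operad $\ccD(V)$, which the paper has already baked into its framework via the empty $G$-set.

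My first step is to observe that the empty set $\emptyset$ is a perfectly good finite $G$-set, corresponding to arity $n=0$. The little disks operad has $\ccD(V)_0=\ast$ by the definition given in the paper: there is a unique empty tuple of nonoverlapping disks, and $G$ acts trivially on it. Equivalently, in the language of Theorem~\ref{thm:DiskAlgebras}, $\Emb^G(\emptyset,V)$ is a single point (the unique vacuous equivariant embedding), and this point is automatically $G$-fixed and $\Aut^G(\emptyset)$-invariant.

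Next, I would apply Theorem~\ref{thm:DiskAlgebras} with $H=G$ and $T=\emptyset$ to obtain a $G$-equivariant map
\[
\Map(\emptyset,X)\longrightarrow X.
\]
Since $\Map(\emptyset,X)=\ast$ with trivial $G$-action, this is precisely the datum of a $G$-fixed point $e\in X^G$, which I take to be the canonical basepoint. The naturality statement (canonicality) is automatic since the whole construction depends only on the operad and the algebra structure.

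There is no real obstacle here: the statement is essentially a restatement of the unit axiom for an algebra over the (unital) operad $\ccD(V)$, made $G$-equivariant by the observation that $\ccD(V)_0$ is a point with trivial $G$-action. The only things requiring verification are that the empty set genuinely fits into the framework as a finite $G$-set at arity $n=0$, and that its space of equivariant embeddings into $V$ is nonempty — both of which are immediate, and the latter parallels the role that $\Conf_{G/G}(V)\ne\emptyset$ played in Corollary~\ref{cor:EmbModule}.
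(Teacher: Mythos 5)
Your argument is correct and is exactly the paper's proof: the proposition follows from the zeroth structure map corresponding to $\ccD(V)_0=\ast$, which you have simply unpacked in slightly more detail via the empty $G$-set. No issues.
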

\begin{proof}
This is the zeroth structure map corresponding to $\ccD(V)_0=\ast$.
\end{proof}

This connects with traditional constructions in the expected way: trivial subspaces gives increasingly coherently commutative multiplications.

\begin{proposition}\label{prop:Multiplications}
For any $V$ and for any $\ccD(V)$-algebra $X$, if $H\subset G$ has $\dim V^H\geq 1$, then $i_H^\ast X$ has an $E_{\dim V^H}$-structure. The basepoint $e\in i_H^\ast X$ is the unit.
\end{proposition}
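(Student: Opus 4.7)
The plan is to exhibit, inside the restricted operad $i_H^\ast \ccD(V)$, a sub-operad with trivial $H$-action that is a classical $E_{\dim V^H}$-operad, and to check that the given $\ccD(V)$-algebra structure on $X$ restricts along it. The main tool is Lemma~\ref{lem:GraphFixedPoints} applied to trivial $H$-sets.

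For each $n\geq 0$, let $T_n$ denote the trivial $H$-set on $n$ points, with associated graph subgroup $\Gamma_n = H\times\{1\}\subset H\times\Sigma_n$. By Lemma~\ref{lem:GraphFixedPoints},
\[
\ccD(V)_n^{\Gamma_n}\simeq \Emb^H(T_n,V) = \Conf_n(V^H),
\]
since an $H$-equivariant embedding of a trivial $H$-set must land in $V^H$. This is precisely the space modelling the $n$-th term of the classical little disks operad for the Euclidean space $V^H$ of dimension $\dim V^H\geq 1$. Operadic composition in $\ccD(V)$ is $H$-equivariant and therefore preserves $\Gamma_n$-fixed points: inserting a family of little disks whose defining affine data commutes with $H$ (equivalently, whose disks live in $V^H$) into another such family again yields disks in $V^H$. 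Thus $\{\ccD(V)_n^{\Gamma_n}\}_n$ assembles into a topological sub-operad of $i_H^\ast\ccD(V)$ with trivial $H$-action that is levelwise equivalent to the classical little $(\dim V^H)$-disks operad.

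Now apply Theorem~\ref{thm:DiskAlgebras} with $T=T_n$: each point of $\Emb^H(T_n,V)$ yields an $H$-equivariant map $X^n = \Map(T_n,X)\to X$ with the diagonal $H$-action on the source, and the $\ccD(V)$-algebra coherences for $X$ restrict to operadic coherences over this sub-operad. Hence $i_H^\ast X$ becomes an algebra over an $E_{\dim V^H}$-operad. The unit is provided by the zeroth level: $\ccD(V^H)_0 = \ccD(V)_0 = \ast$ picks out $e$ by Proposition~\ref{prop:Pointed}. The only delicate point is the identification of $\Gamma_n$-fixed little disks with classical little disks in $V^H$; this is forced by the affine-linear nature of the maps defining little disks, as such a map commuting with the $H$-action on $V$ is determined by the little disk it cuts out of the fixed subspace.
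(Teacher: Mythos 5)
Your proof is correct, and it is exactly the argument the paper leaves implicit: the graph subgroups $H\times\{1\}$ attached to trivial $H$-sets have fixed points $\ccD(V)_n^{H\times\{1\}}\simeq\Conf_n(V^H)$ by Lemma~\ref{lem:GraphFixedPoints}, these assemble into a $\Sigma_n$-free sub-operad of $i_H^\ast\ccD(V)$ equivalent to the little $(\dim V^H)$-disks operad, and restriction of the algebra structure along this inclusion (with the unit coming from $\ccD(V)_0=\ast$ as in Proposition~\ref{prop:Pointed}) gives the claim. Your closing remark correctly handles the one delicate point, that an $H$-equivariant little disk is determined by the disk it cuts out of $V^H$, so no gap remains.
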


\begin{corollary}
If $V$ is any orthongal representation which contains infinitely many copies of the trivial representation, then any $\ccD(V)$-algebra has an underlying $E_\infty$ multiplication.
\end{corollary}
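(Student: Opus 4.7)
The plan is to recognize this corollary as an essentially immediate consequence of Proposition~\ref{prop:Multiplications}; there is no real obstacle, just a short dimension count. First I would unpack the hypothesis on $V$: if $V$ contains infinitely many copies of the trivial representation, then there is a $G$-equivariant isometric embedding $\iota \colon \R^\infty \hookrightarrow V$ with $G$ acting trivially on $\R^\infty$. Since trivially-acted-on vectors are automatically fixed by every subgroup, $\iota(\R^\infty) \subset V^H$ for every $H \subset G$, so $\dim V^H = \infty$.

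The second step is to feed this dimension count into Proposition~\ref{prop:Multiplications}: for every $H \subset G$, the restriction $i_H^\ast X$ carries an $E_{\dim V^H} = E_\infty$-structure. Taking $H = \{e\}$ gives the underlying non-equivariant $E_\infty$-multiplication asserted in the corollary. (Taking other subgroups gives the stronger statement that the restriction to every $H$ is $E_\infty$, which is no harder.)

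Equivalently, and perhaps more transparently, the embedding $\iota$ induces a $G$-equivariant inclusion of operads $\ccD(\R^\infty) \hookrightarrow \ccD(V)$ in which the source carries the trivial $G$-action. Since $\ccD(\R^\infty)$ is a standard model for the $E_\infty$-operad, restricting the algebra structure along this operad map exhibits any $\ccD(V)$-algebra as an $E_\infty$-algebra. The only minor subtlety is the interpretation of $E_\infty$ as the ``value'' of $E_n$ at $n = \infty$; one resolves this either via the operad map just described or via a colimit over the finite-dimensional trivial subrepresentations $\R^n \subset \R^\infty \subset V$, and either resolution makes the corollary immediate from the previous proposition.
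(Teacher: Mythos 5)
Your proof is correct and matches the paper's (implicit) argument: the corollary is stated without proof precisely because it is the immediate specialization of Proposition~\ref{prop:Multiplications} to the case $\dim V^H=\infty$, which your dimension count supplies. Your alternative via the operad inclusion $\ccD(\R^\infty)\hookrightarrow\ccD(V)$ with trivial $G$-action on the source is a clean way to handle the $n=\infty$ bookkeeping, and it also makes clear that the resulting $E_\infty$-structure is the naive (trivially equivariant) one, which is the intended reading of ``underlying.''
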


The analysis of the embedding spaces automatically gives us additional structure maps.

\begin{proposition}\label{prop:OperadicNorms}
If $V$ has $\mathring{V}^H\neq \emptyset$, then we have norm maps
\[
\Map_H(G,i_H^\ast X)\cong \Map(G/H,X)\to X,
\]
and if $\pi_{0}\mathring{V}^{H}=\ast$, then this is unique up to homotopy.
\end{proposition}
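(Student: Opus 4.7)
The plan is to apply Theorem~\ref{thm:DiskAlgebras} with $T=G/H$ regarded as a $G$-set. That theorem guarantees that each point of $\Emb^G(G/H,V)$ determines a $G$-equivariant multiplication $\Map(G/H,X)\to X$. The isomorphism $\Map_H(G,i_H^\ast X)\cong \Map(G/H,X)$ appearing in the statement is then just the standard coinduction/mapping-space adjunction, which turns an $H$-equivariant map out of $G$ into a $G$-equivariant map out of $G/H$.

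The second step is to identify the parameter space $\Emb^G(G/H,V)$. By Theorem~\ref{thm:TopologyofEmbeddings}, this is a configuration space $\Conf_{G/H}(V)$; because only a single orbit $G/H$ is being embedded, no collision condition between distinct orbits appears. The defining description of $\Conf_{nG/H}(V)$ therefore collapses (for $n=1$) to the set of points of $V$ with stabilizer exactly $H$, namely $\mathring V^H$. The hypothesis $\mathring V^H\neq\emptyset$ thus produces a point in the operation space and hence a norm map as claimed.

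For the uniqueness clause, the point is that the operadic input is itself a homotopy-theoretic object: any two points in $\mathring V^H$ joined by a path $\gamma\colon[0,1]\to\mathring V^H$ yield homotopic maps $\Map(G/H,X)\to X$, the homotopy being the composite of $\gamma$ with the operadic action. If $\pi_0\mathring V^H=\ast$ the parameter space is path-connected, so any two choices are related in this way, giving a unique norm up to homotopy. I do not expect serious obstacles here, since everything follows directly from the preceding identifications; the one piece of bookkeeping is checking that the various presentations of $\Map_H(G,i_H^\ast X)$ agree as $G$-spaces and are compatible with the Weyl action $W_G(H)\cong\Aut^G(G/H)$ on $\mathring V^H$.
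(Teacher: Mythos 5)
Your argument is correct and is exactly the one the paper intends: the proposition is stated there without proof as an immediate consequence of Theorem~\ref{thm:DiskAlgebras} together with the identification $\Emb^G(G/H,V)\cong\Conf_{G/H}(V)=\mathring{V}^H$, with path-connectedness giving homotopy uniqueness. Your filling-in of these steps, including the Weyl-equivariance bookkeeping, matches the paper's approach.
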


\begin{remark}
If $V$ is a $G$-universe (so $\ccD(V)$ is an $\Ninfty$ operad), then co\"{i}nduction is both the right and the left adjoint to the forgetful functor. The norm maps realize the counit of the adjunction (with co\"{i}nduction as the left adjoint).
\end{remark}

\begin{proposition}
If $V$ has $\mathring{V}^{H}\neq\emptyset$, then we have an action map
\[
\mu_{G/H}\colon \Map(G/H,X)\times X\to X
\]
making $X$ into a module over the $E_{\dim V^{H}}$-space $\Map(G/H,X)$. If $\pi_{0}\mathring{V}^{H}=\ast$, then this is homotopically unique.
\end{proposition}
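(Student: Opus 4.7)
The plan is to extract $\mu_{G/H}$ directly from Corollary~\ref{cor:Module}, upgrade to module coherence using the graph-subgroup analysis of Theorem~\ref{thm:DiskAlgebras} and Theorem~\ref{thm:AnalysisofFreeDisks}, and then read off uniqueness from path-connectivity of the relevant parameter space. First I would take $T=G/H$. When $H\subsetneq G$ we have $T^G=\emptyset$, so Corollary~\ref{cor:Module} produces a $G$-equivariant map
\[
\mu_{G/H}\colon \Map(G/H,X)\times X\to X
\]
parameterized by $\Emb^G(G/H\amalg \ast,V)\simeq \Emb^G(G/H,V)\cong \Conf_{G/H}(V)=\mathring V^H$, which is non-empty by hypothesis. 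Any chosen point gives the map. If instead $H=G$, then $\Map(G/G,X)\cong X$ and the map is just the $E_{\dim V^G}$-multiplication furnished by Proposition~\ref{prop:Multiplications}.

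Second, I would exhibit the $E_{\dim V^{H}}$-algebra structure on $\Map(G/H,X)$. Proposition~\ref{prop:Multiplications} applied to $i_H^{\ast}X$ produces an $E_{\dim V^H}$-structure, and transporting along $\Map(G/H,X)\cong \CoInd_H^G i_H^\ast X$ yields a pointwise multiplication on the co\"induction. To see that $\mu_{G/H}$ is genuinely a module action for this $E_{\dim V^H}$-algebra, I would apply Theorem~\ref{thm:DiskAlgebras} to the family of $G$-sets $T_n=n(G/H)\amalg \ast$; Corollary~\ref{cor:EmbModule} then yields maps $\Map(G/H,X)^{n}\times X\to X$ parameterized by $\Emb^G(n(G/H),V)$, and the compatibility as $n$ varies comes from operad composition in $\ccD(V)$, which on the combinatorial side corresponds to disjoint unions and nestings of graph subgroups. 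This is precisely the data packaged by Theorem~\ref{thm:AnalysisofFreeDisks}.

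Finally, for homotopical uniqueness, observe that the parameter space $\Emb^G(G/H\amalg \ast, V)\simeq \mathring V^H$ is path-connected under the hypothesis $\pi_0 \mathring V^{H}=\ast$; hence any two choices of point yield homotopic action maps via a path of parameters, just as in the uniqueness portion of Proposition~\ref{prop:OperadicNorms}. The main obstacle, I expect, is the second step: while the existence of $\mu_{G/H}$ and the uniqueness statement are nearly immediate, verifying that $\mu_{G/H}$ exhibits $X$ as a module over the $E_{\dim V^H}$-algebra $\Map(G/H,X)$, rather than merely carrying some weaker compatibility, requires tracking which graph subgroups of $G\times\Sigma_n$ arise under operad composition and matching them with those coming from the $T_n$. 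This is essentially combinatorial bookkeeping justified by Theorem~\ref{thm:AnalysisofFreeDisks}, but it is where the real content of the module coherence lives.
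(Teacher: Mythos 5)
Your existence and uniqueness steps match the paper: the action map comes from a point of $\Emb^G(G/H\amalg\ast,V)$, which Corollary~\ref{cor:EmbModule} shows is non-empty; the $E_{\dim V^H}$-structure on $\Map(G/H,X)$ comes from Proposition~\ref{prop:Multiplications} plus the fact that co\"induction is strong symmetric monoidal; and uniqueness follows from path-connectivity of $\mathring{V}^{H}$. The gap is exactly where you suspected it: the module coherence. Your plan is to produce maps $\Map(G/H,X)^{n}\times X\to X$ from points of $\Emb^G(n(G/H)\amalg\ast,V)$ and then claim compatibility "from operad composition, packaged by Theorem~\ref{thm:AnalysisofFreeDisks}." That theorem decomposes the free $\ccD(V)$-algebra as a bar construction; it does not supply the coherence you need. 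Merely having, for each $n$, a non-empty space of maps $\Map(G/H,X)^{n}\times X\to X$ is not a module structure: you must show that iterating the action is homotopic (coherently) to first multiplying in $\Map(G/H,X)$ and then acting once, and the multiplication on $\Map(G/H,X)$ is \emph{not} itself one of the $G$-equivariant operations parameterized by $\Emb^G(n(G/H)\amalg\ast,V)$ --- it is co\"induced from the $H$-equivariant $E_{\dim V^H}$-structure on $i_H^\ast X$. Moreover, you cannot patch this by running paths in the parameter spaces, since the proposition asserts the module structure whenever $\mathring{V}^{H}\neq\emptyset$, with no connectivity hypothesis; the spaces $\Emb^G(n(G/H)\amalg\ast,V)$ may be disconnected.

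The paper closes this gap with an explicit construction that your write-up is missing. A single chosen point $\mu_{G/H}$ determines a little disk $d_{eH}\subset D(V)$ for the coset $eH$, and one forms the composite
\[
\ccD(V)_{m}\xrightarrow{\ \Delta^{G}\ }\Map\big(G/H,\ccD(V)_{m}\big)\xrightarrow{\ \mu_{G/H}\ }\ccD(V)_{m|G/H|},
\]
where $\Delta^{G}$ is the twisted diagonal: one places $m$ little disks inside $d_{eH}$, forms the $|G/H|$ conjugate configurations, and embeds the whole arrangement into $V$ via $\mu_{G/H}$. This map is $G\times\Sigma_m$-equivariant and visibly compatible with operad composition in the source, and it is this operad-level factorization --- intertwining the co\"induced $E_{\dim V^H}$-structure with the $\ccD(V)$-action on $X$ --- that produces the module structure from one choice of point. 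If you want to complete your argument, you should construct this map (or an equivalent one) rather than appeal to Theorem~\ref{thm:AnalysisofFreeDisks}.
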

\begin{proof}
Proposition~\ref{prop:Multiplications} shows that $i_{H}^{\ast}X$ is an $E_{\dim V^{H}}$-space, and since coinduction is a strong symmetric monoidal functor from $H$-spaces to $G$-spaces, we deduce that
\[
\Map_{H}(G,i_{H}^{\ast}X)\cong \Map(G/H,X)
\]
is an $E_{\dim V^{H}}$-space. The action map is given by any point in the space $\ccD(V)_{|G/H|+1}^{\Gamma_{G/H}}$, which Corollary~\ref{cor:EmbModule} shows is non-empty. 

The only part of the proposition which requires proof is the assertion that this action map makes $X$ a module. For this, note that $\mu_{G/H}$ determines a disk $d_{eH}$ inside of $D(V)$, namely the one corresponding to the coset $eH$. This in turn gives a map
\[
\ccD(V)_{m}\xrightarrow{\Delta^{G}} \Map\big(G/H,\ccD(V)_{m}\big)\xrightarrow{\mu_{G/H}} \ccD(V)_{m|G/H|}
\]
which is $G\times \Sigma_{m}$-equivariant. Here $\Delta^{G}$ is the twisted diagonal adjoint to the identity map on $i_{H}^{\ast}\ccD(V)_{m}$, and $\Sigma_{m}$ acts diagonally in the coinduced space and via the diagonal copy of $\Sigma_{m}^{|G/H|}$ in $\Sigma_{m|G/H|}$. Moreover, we here identify the element $\mu_{G/H}$ with the equivariant map
\[
G\times\Sigma_{|G/H|}/\Gamma_{G/H}\to \ccD(V)_{|G/H|}.
\]

This map views $\ccD(V)_{m}$ as being an arrangement of $m$ little disks in $d_{eH}$, forming the $|G/H|$ collections of little $m$ disks by conjugating by $g$, and then using the natural embedding of $G\times_{H} d_{eH}$ into $V$ given by $\mu_{G/H}$. This map is visibly compatible with compositions in the source, which gives the module structure.

The homotopical uniqueness follows from the homotopical uniqueness of $\mu_{G/H}$ in the case the space $\mathring{V}^{H}$ is path connected.
%
%Any arrangement of $k$ little disks in this copy of $D(V)$ actually produces an embedding of $k\cdot [G:H]$ little disks by conjugating by the action of $G$. Since the disk $d_{eH}$ and all of its translates are disjoint, this procedure always produces disjoint little disks. Since operadic composition is associative, this gives us the desired compatibility with the module structure.
\end{proof}

\begin{remark}
We have not used the full-strength of the operadic action here. The operad $\ccD(i_{H}^{\ast}V)$ acts on $i_{H}^{\ast}X$ and hence on $\Map(G/H,X)$. This also has norm maps, by this procedure, and that endows $X$ also with a compatible module structure over this via
\[
\Map_{H}\big(G,\Map_{K}(H,i_{K}^{\ast}X)\big)\times \Map(G/H,X)\times X\to \Map(G/H,X)\times X\to X.
\]
\end{remark}

\subsection{The case of \texorpdfstring{$\ccD(\sigma)$}{sign disks}}\label{sec:SignedLoopsTopology}
Now let $X$ be an algebra in spaces over $\ccD(\sigma)$. Here, however, we run into the issue that $\sigma$ is one dimensional. In particular, $i_{e}^{\ast}\ccD(\sigma)$ is an $E_{1}$-operad, and hence we have no reason to believe that it is homotopy commutative. One of the surprising features of a $\ccD(\sigma)$-algebra is that it comes equipped with a canonical isomorphism
\[
i_{e}^{\ast} X\xrightarrow{\cong} i_{e}^{\ast} X^{op}.
\]
This is one of the defining features of a $\ccD(\sigma)$-algebra, and it was independently observed by Hahn-Shi in their study of the Real orientation of the Lubin-Tate spectra \cite{HahnShi}.

\subsubsection{Underlying $E_{1}$-space}
Since $\sigma$ is one dimensional, the space $i_{e}^{\ast}X$ is an $E_{1}$-space. The following is classical; we include it to emphasize the equivariance.
\begin{proposition}\label{prop:UnderlyingDsigma}
Pulling the images of points in an embedding of $\{1,\dots, 2k\}$ to the points $-k,\dots, -1, 1, \dots, k$ in $\sigma$ gives a $\Sigma_{2k}$-homotopy equivalence
\[
\ccD(i_e^{\ast}\sigma)_{2k}\simeq \Sigma_{2k}.
\]
Pulling the images of points in an embedding of $\{1,\dots, 2k+1\}$ to the points $-k,\dots, k$ in $\sigma$ gives a $\Sigma_{2k+1}$-homotopy equivalence
\[
\ccD(i_{e}^{\ast}\sigma)_{2k+1}\simeq\Sigma_{2k+1}.
\]
\end{proposition}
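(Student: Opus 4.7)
The plan is to reduce to a standard fact about configurations of points on the real line, then exhibit the explicit deformation retract. First, applying Lemma~\ref{lem:GraphFixedPoints} with the trivial group (so $\Gamma$ is the trivial subgroup and $T = \{1,\dots,n\}$ is the free $\Sigma_n$-set) produces a $\Sigma_n$-equivariant weak equivalence
\[
\ccD(i_e^\ast\sigma)_n \simeq \Emb(\{1,\dots,n\},\mathbb{R}) = \Conf_n(\mathbb{R})
\]
via the center map. Thus it suffices to show that $\Conf_n(\mathbb{R})$ is $\Sigma_n$-equivariantly homotopy equivalent to the discrete $\Sigma_n$-torsor, and to identify the equivalence with the prescribed one.

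Next, I would decompose $\Conf_n(\mathbb{R})$ into connected components indexed by linear orderings. Explicitly, for each $\tau \in \Sigma_n$ let
\[
U_\tau = \{(x_1,\dots,x_n)\in\mathbb{R}^n \mid x_{\tau(1)} < x_{\tau(2)} < \dots < x_{\tau(n)}\}.
\]
Each $U_\tau$ is a convex open subset of $\mathbb{R}^n$, hence contractible; they are pairwise disjoint; and their union is exactly $\Conf_n(\mathbb{R})$, since any $n$ distinct real numbers admit a unique linear ordering. The $\Sigma_n$-action by permutation of coordinates sends $U_\tau$ to $U_{\sigma\tau}$, so the action permutes the components freely and transitively.

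To pin down the equivalence as stated, pick the distinguished point $p_\tau \in U_\tau$ whose coordinates are the permutation by $\tau$ of the standard list $(-k,\dots,-1,1,\dots,k)$ (respectively $(-k,\dots,k)$ in the odd case). The straight-line homotopy within each convex $U_\tau$ from the identity to the constant map at $p_\tau$ assembles into a $\Sigma_n$-equivariant deformation retraction of $\Conf_n(\mathbb{R})$ onto the discrete set $\{p_\tau\}_{\tau \in \Sigma_n} \cong \Sigma_n$. Composing with the equivalence from Lemma~\ref{lem:GraphFixedPoints} yields the stated $\Sigma_n$-equivalence, with the matching conventions for $n=2k$ and $n=2k+1$.

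There is really no obstacle here; the only thing requiring a little care is checking that the $\Sigma_n$-action on components matches the action on the standard points $p_\tau$, and that the straight-line homotopy is $\Sigma_n$-equivariant, which it is because permuting coordinates carries $p_\tau$ to $p_{\sigma\tau}$ and $U_\tau$ convexly onto $U_{\sigma\tau}$.
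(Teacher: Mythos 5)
Your argument is correct and is exactly the ``pull the beads on a string'' retraction the paper has in mind (the paper omits the proof as classical, but sketches the same idea in the proof of Proposition~\ref{prop:FixedDsigma}): reduce to $\Conf_n(\mathbb{R})$ via the center map, observe the components are the convex chambers $U_\tau$, and deformation-retract each onto the standard point. The only point you elide is that the center-map equivalence of Lemma~\ref{lem:GraphFixedPoints} is stated as a weak equivalence, so one should note the spaces involved are $\Sigma_n$-CW (or that the center map is itself a $\Sigma_n$-deformation retraction) to upgrade to the claimed homotopy equivalence; this is routine.
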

We write these symmetrically because then the underlying action of $C_{2}$ on $\sigma$ preserves the preferred images: the sets $\{\pm 1,\dots, \pm k\}$ and $\{\pm 1,\dots, \pm k,0\}$ are $C_{2}$-equivariant subsets of $\sigma$. Of course, the underlying story does not a priori care about what collection of points we choose, and our analysis of the underlying structure does not care what collection of points in $\sigma$ we choose. By having a $C_{2}$-invariant subset, however, we underscore the connection with the fixed points.

Under any choice of points, we see that the underlying $C_{2}$ sign action turns an ordered collection of points into one with the opposite order. Since $C_{2}$ is abelian, this is also a $C_{2}$-equivariant map from $\ccD(\sigma)$ to itself. This shows the following.
\begin{proposition}\label{prop:AntiAut}
If $X$ is a $\ccD(\sigma)$-algebra, then the non-trivial element of $C_{2}$ acts on $X$ as an anti-automorphism of $E_{1}$-algebras.
%The $C_{2}$-action on $\sigma$ endows any $\ccD(\sigma)$-algebra $X$ with a canonical anti-automorphism. This commutes with the action of $C_{2}$ on $X$.
\end{proposition}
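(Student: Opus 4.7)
The plan is to leverage the $C_{2}$-equivariance of the operadic structure maps together with the explicit description of the underlying $E_{1}$-operad from Proposition~\ref{prop:UnderlyingDsigma}. The core observation is that the $C_{2}$-action on $\ccD(\sigma)_{n}$, when translated through the identification $\ccD(i_{e}^{\ast}\sigma)_{n}\simeq \Sigma_{n}$, becomes translation by the longest element $w_{0}\in \Sigma_{n}$, and this is precisely the combinatorial gadget that witnesses the passage from an $E_{1}$-structure to its opposite.

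First I would pin down the $C_{2}$-action on the underlying $E_{1}$-operad. The generator $\sigma\in C_{2}$ acts on $\sigma$ by negation, so in the equivalence of Proposition~\ref{prop:UnderlyingDsigma} it sends each chosen basepoint to its negative, reversing the order of the basepoints. Under the equivalence with $\Sigma_{n}$, this is translation by $w_{0}$. Importantly, the family of identifications $\ccD(i_{e}^{\ast}\sigma)_{n}\simeq \Sigma_{n}$ is compatible with operadic composition—this is the standard presentation of the associative operad—so the whole discussion can be carried out simultaneously for all arities in a way respecting composition.

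Next I would run the equivariance argument. Pick $c\in\ccD(\sigma)_{n}$ corresponding to the identity of $\Sigma_{n}$, so that $\mu(c;x_{1},\dots,x_{n})=x_{1}\cdots x_{n}$. The $C_{2}$-equivariance of the structure map reads
\[
\sigma\bigl(\mu(c;x_{1},\dots,x_{n})\bigr)=\mu\bigl(\sigma\cdot c;\sigma x_{1},\dots,\sigma x_{n}\bigr).
\]
Since $\sigma\cdot c$ corresponds to $w_{0}$, the $\Sigma_{n}$-equivariance of the structure map rewrites the right-hand side as $\sigma x_{n}\cdot \sigma x_{n-1}\cdots \sigma x_{1}$. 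For $n=2$ this is the defining identity $\sigma(xy)=\sigma(y)\sigma(x)$ of an anti-automorphism of associative algebras; for general $n$ it says that $\sigma$ intertwines the $E_{1}$-structure on $X$ with the opposite $E_{1}$-structure on all $n$-ary operations at once.

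To conclude, I would package this into the statement that $\sigma\colon X\to X^{op}$ is a map of $E_{1}$-algebras. The higher coherences beyond the plain anti-automorphism identity are automatic: they follow from the fact that the $C_{2}$-equivariance of the structure maps is part of the operadic data at every arity and is compatible with operadic composition, combined with the aforementioned compatibility of the identifications $\ccD(i_{e}^{\ast}\sigma)_{n}\simeq\Sigma_{n}$ with composition. The only real subtlety—and the place where one needs to be careful—is matching conventions so that left-translation by $w_{0}$ on $\Sigma_{n}$ really does model the opposite $E_{1}$-structure; after fixing conventions, the proposition reduces to a direct application of $C_{2}\times\Sigma_{n}$-equivariance.
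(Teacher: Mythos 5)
Your proposal is correct and follows essentially the same route as the paper: the paper's (very terse) justification is exactly the observation that negation on $\sigma$ reverses the order of the configured points, so that the $C_{2}$-action on $\ccD(i_{e}^{\ast}\sigma)_{n}\simeq\Sigma_{n}$ is order reversal (your $w_{0}$), and equivariance of the structure maps then yields the anti-automorphism. You simply spell out the equivariance computation and the coherence across arities in more detail than the paper does.
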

This is exactly what makes the fixed points of a $\ccD(\sigma)$-algebra not have a product: the action and multiplication do not commute.

\begin{remark}
If $X=\Omega^{\sigma} Y$, then the effect of the anti-automorphism from Proposition~\ref{prop:AntiAut} in homotopy is readily understood: it is inversion.
\end{remark}

\subsubsection{Norms and actions}
The analysis in the previous section shows that $X$ comes equipped with a collection of structure maps
\[
\Map(kC_2+\epsilon C_2/C_2,X)\cong \Map(C_{2},X)^{k}\times X^{\epsilon}\to X,
\]
as $k$ ranges over the natural numbers and $\epsilon$ is $0$ or $1$. Since $\dim i_{e}^{\ast}\sigma=1$, this is homotopically quite simple.

\begin{proposition}\label{prop:FixedDsigma}
For all $k$ and $\epsilon$, the space of structure maps is a homotopy discrete $\Aut^{C_{2}}(kC_{2})$-torsor.
\[
\Emb(kC_{2}, \sigma)\simeq \Aut^{C_{2}}(kC_{2}).
\]
\end{proposition}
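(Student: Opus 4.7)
The plan is to combine Lemma~\ref{lem:GraphFixedPoints} with the hyperplane description of Theorem~\ref{thm:TopologyofEmbeddings} to reduce the claim to a concrete ordered configuration space in $\R$, and then to verify the torsor structure by a direct component count.

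First I would unpack the two definitions. By Lemma~\ref{lem:GraphFixedPoints} we have $\Emb^{C_{2}}(kC_{2},\sigma)\cong \Conf_{kC_{2}/e}(\sigma)$, and since $\sigma^{\partial e}=\sigma^{C_{2}}=\{0\}$, the hyperplane-complement description specializes to
\[
\Conf_{kC_{2}/e}(\sigma)=\{(v_{1},\dots,v_{k})\in(\R\setminus 0)^{k}\mid v_{i}\neq \pm v_{j}\text{ for all }i\neq j\}.
\]
Next I would split each $v_{i}$ into sign and magnitude, writing $v_{i}=\epsilon_{i}a_{i}$ with $\epsilon_{i}\in\{\pm 1\}$ and $a_{i}>0$. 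This gives a homeomorphism
\[
\Conf_{kC_{2}/e}(\sigma)\cong \{\pm 1\}^{k}\times \Conf_{k}(\R_{>0}),
\]
and $\Conf_{k}(\R_{>0})$ decomposes into $k!$ open convex chambers indexed by the total orderings of the coordinates. Each chamber is contractible, so the whole space is homotopy discrete with exactly $2^{k}\cdot k!$ components, matching $|C_{2}\wr\Sigma_{k}|=|\Aut^{C_{2}}(kC_{2})|$.

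To upgrade the homotopy equivalence to a torsor, I would fix a basepoint embedding $e_{0}\colon kC_{2}\hookrightarrow \sigma$ sending the $i$th free orbit to $\{\pm i\}$, and consider the equivariant map
\[
\Aut^{C_{2}}(kC_{2})\longrightarrow \Emb^{C_{2}}(kC_{2},\sigma),\qquad \phi\mapsto e_{0}\circ\phi.
\]
A given $\phi\in C_{2}\wr\Sigma_{k}$ encodes a permutation of the $k$ orbits together with a sign on each, and these correspond respectively to the chamber of $\Conf_{k}(\R_{>0})$ occupied by the magnitudes $a_{i}$ and to the tuple $(\epsilon_{i})$ of signs. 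Distinct $\phi$ therefore land in distinct components; since source and $\pi_{0}$ of the target have the same finite cardinality, this map induces a bijection on $\pi_{0}$, yielding the claimed weak equivalence together with the torsor structure.

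The only genuine point to verify is the compatibility of the equivariance: one must check that the natural precomposition action of $\Aut^{C_{2}}(kC_{2})\cong W_{C_{2}}(e)\wr\Sigma_{k}$ on $\Emb^{C_{2}}(kC_{2},\sigma)$ matches the product action on $\{\pm 1\}^{k}\times \Conf_{k}(\R_{>0})$ under the splitting $v_{i}=\epsilon_{i}a_{i}$. This is immediate, since the $\Sigma_{k}$-factor permutes the orbits (and hence the indices $i$) while each copy of $C_{2}=W_{C_{2}}(e)$ acts within a single orbit, flipping the corresponding sign $\epsilon_{i}$ and leaving the magnitude $a_{i}$ untouched. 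No step is technically difficult; the only mild obstacle is keeping the bookkeeping of orbits, chambers, and signs straight.
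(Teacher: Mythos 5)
Your argument is correct and is essentially the paper's proof in expanded form: the paper also identifies $\mathring{\sigma}^{e}=\R^{\times}\cong C_{2}\times\R$ (your sign/magnitude splitting $v_{i}=\epsilon_{i}a_{i}$) and then deformation retracts each configuration onto the standard one $\{\pm 1,\dots,\pm k\}$ by ``pulling the beads,'' which is exactly your chamber-by-chamber contraction together with the $\pi_{0}$-count identifying the components with $C_{2}\wr\Sigma_{k}$. Your explicit verification of the $\Aut^{C_{2}}(kC_{2})$-equivariance of the splitting is a welcome spelling-out of what the paper dismisses as obvious.
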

\begin{proof}
The space $(\mathring{\sigma})^{e}$ is $\mathbb R^{\times}\cong C_{2}\times \mathbb R$. Given any embedding, we can pull the images like beads on a string so that they become $\{\pm 1, \dots, \pm k\}$. This is obviously $\Aut^{C_{2}}(kC_{2})$-equivariant.
\end{proof}
Thus up to homotopy, there is a unique such map for each automorphism of $kC_{2}$, and moreover, the automorphisms generated by $2$ kinds:
\begin{enumerate}
\item Ordinary permutations of the $C_{2}$-sets and
\item the action of $C_{2}=\Aut^{C_{2}}(C_{2})$ on each summand.
\end{enumerate}

Let $\gamma$ be the non-trivial element of $C_{2}$. Here, we have $2$ norm maps
\[
n_{e}^{C_{2}}\colon \Map(C_2,X)\to X\text{ and }n_{e}^{C_{2}}\circ\gamma\colon\Map(C_2,X)\to X,
\]
and $2$ action maps
\[
\mu_{C_{2}}\colon \Map(C_{2},X)\times X\to X\text{ and }\mu_{C_{2}}\circ\gamma\colon \Map(C_{2},X)\times X\to X.
\]

The identification of the embedding space from Proposition~\ref{prop:FixedDsigma} is compatible with restriction, giving us the chosen points in Proposition~\ref{prop:UnderlyingDsigma}. Observing that the ordering of the negative integers is again opposite those of the positive shows the following.

\begin{proposition}
As $E_{1}$-space, we have 
\[
i_{e}^{\ast}\Map(C_{2},X)\cong i_{e}^{\ast}X\times i_{e}^{\ast}X^{op},
\]
the enveloping algebra of $i_{e}^{\ast}X$.
\end{proposition}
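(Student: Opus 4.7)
The plan is to compute the underlying nonequivariant $E_{1}$-multiplication on $\Map(C_{2},X)$ directly in coordinates and recognize the result as the enveloping algebra multiplication. By the previous proposition, this $E_{1}$-structure is the coinduction of the underlying $E_{1}$-structure on $i_{e}^{\ast}X$, so the main tools will be the $(i_{e}^{\ast}\dashv \CoInd_{e}^{C_{2}})$-adjunction together with the anti-automorphism property of Proposition~\ref{prop:AntiAut}.

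First, I would identify $\Map(C_{2},X) \cong X\times X$ via $f\mapsto (f(e),f(\gamma))$. Under this identification, the conjugation $C_{2}$-action on $\Map(C_{2},X)$ reads as $\gamma\cdot(a,b)=(\gamma b,\gamma a)$. By the adjunction, a $C_{2}$-equivariant multiplication $m\colon \Map(C_{2},X)^{\times 2}\to\Map(C_{2},X)$ is determined by its $e$-coordinate, and under the coinducted $E_{1}$-structure this is just the nonequivariant product of $e$-coordinates: $m((f_{1},f_{2}))(e) = f_{1}(e)\cdot f_{2}(e) = a_{1}a_{2}$. Then I would compute the $\gamma$-coordinate by imposing $C_{2}$-equivariance:
\[
m((f_{1},f_{2}))(\gamma) \;=\; \gamma\cdot m\bigl(\gamma\cdot(f_{1},f_{2})\bigr)(e) \;=\; \gamma\cdot\bigl((\gamma b_{1})\cdot(\gamma b_{2})\bigr).
\]
Applying Proposition~\ref{prop:AntiAut}, the anti-automorphism property gives $(\gamma b_{1})(\gamma b_{2}) = \gamma(b_{2}b_{1})$, and the $\gamma$-coordinate simplifies to $b_{2}b_{1}$. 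Assembling the two coordinates,
\[
(a_{1},b_{1})\cdot(a_{2},b_{2}) \;=\; (a_{1}a_{2},\, b_{2}b_{1}),
\]
which is exactly the multiplication on $i_{e}^{\ast}X\otimes i_{e}^{\ast}X^{op}$.

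The main subtle point will be keeping the conjugation $C_{2}$-action on $\Map(C_{2},X)$ straight --- it both swaps the two coordinates and applies $\gamma$ on the values --- when unwinding the adjunction formula; once this bookkeeping is correct, everything falls out of the anti-automorphism. This also matches the operadic picture hinted at in the paragraph preceding the proposition: with the symmetric chosen points of Proposition~\ref{prop:UnderlyingDsigma}, the coordinate $f(e)$ corresponds to the positive point of $\sigma$ (where left-to-right ordering gives the usual multiplication on $i_{e}^{\ast}X$) while $f(\gamma)$ corresponds to the negative point (whose reversed ordering produces $i_{e}^{\ast}X^{op}$).
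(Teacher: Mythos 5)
Your argument is correct, but it reaches the conclusion by a different route than the paper. The paper's justification is purely geometric and essentially one sentence: under the identification of $\Emb(kC_{2},\sigma)$ with $\Aut^{C_{2}}(kC_{2})$ from Proposition~\ref{prop:FixedDsigma}, the chosen embedding places the orbit points at $\{\pm 1,\dots,\pm k\}$, and restricting to the underlying $E_{1}$-structure of Proposition~\ref{prop:UnderlyingDsigma} one simply observes that the negative points appear in the reversed index order, so the corresponding coordinate carries the opposite multiplication. You instead take as input that the $E_{1}$-structure on $\Map(C_{2},X)$ is the coinduced (pointwise) one, and then force the $\gamma$-coordinate by equivariance together with Proposition~\ref{prop:AntiAut}; the coordinate computation $m(f_{1},f_{2})(\gamma)=\gamma\bigl((\gamma b_{1})(\gamma b_{2})\bigr)=b_{2}b_{1}$ is right, and your bookkeeping of the conjugation action $\gamma\cdot(a,b)=(\gamma b,\gamma a)$ is the correct one for $\Map(C_{2},X)$ as the paper uses it. The trade-off: your derivation makes the appearance of $X^{op}$ an algebraic consequence of the anti-automorphism (which is itself an operadic fact), whereas the paper reads it off directly from the geometry of the embedding space, without needing to invoke Proposition~\ref{prop:AntiAut} at all. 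Your closing paragraph correctly reconciles the two pictures. One small caveat: you should make explicit that the multiplication being computed --- the one parameterized by $\Emb^{C_{2}}(2C_{2},\sigma)$ --- agrees with the coinduced pointwise multiplication you start from; this is asserted in the paper's module proposition and is what licenses your first step.
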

Of course, Proposition~\ref{prop:AntiAut} shows that as $E_{1}$-spaces, this is isomorphic to the Cartesian square of $i_{e}^{\ast}X$. Using this hides the more natural ordering of the coordinates.

\begin{corollary}
The restriction of the norm map is the multiplication map
\[
(i_{e}^{\ast}X\times i_{e}^{\ast}X^{op})\to i_{e}^{\ast}X.
\]
The restriction of the action map is the bimodule action map
\[
(i_{e}^{\ast}X\times i_{e}^{\ast} X^{op})\times i_{e}^{\ast} X\to i_{e}^{\ast}X.
\]
The $\Map(C_{2},X)$-module map condition restricts to the observation that these are maps of $i_{e}^{\ast}X$-bimodules.
\end{corollary}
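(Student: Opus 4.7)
The plan is to compute both restrictions by tracking specific points of the embedding spaces that parameterize the norm and action maps and then comparing them with the identifications of the underlying $E_1$-structure given in Propositions~\ref{prop:UnderlyingDsigma} and~\ref{prop:FixedDsigma}. Since we have already identified $i_e^\ast \Map(C_2,X)$ with the enveloping algebra $i_e^\ast X\times i_e^\ast X^{op}$ of $i_e^\ast X$, the only content of the corollary is that the restrictions of $n_e^{C_2}$ and $\mu_{C_2}$ agree on the nose with the canonical structure maps of that enveloping algebra.

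For the norm, I would pick a canonical representative $\iota\in \Emb^{C_2}(C_2,\sigma)$ sending the two cosets to $\{-1,+1\}$; Proposition~\ref{prop:FixedDsigma} says any other choice differs from $\iota$ by an element of $\Aut^{C_2}(C_2)$ and so induces the same map up to homotopy. Restricting $\iota$ to the trivial subgroup yields the ordered non-equivariant embedding $\{1,2\}\hookrightarrow \mathbb R$ with $1\mapsto -1$ and $2\mapsto +1$, which by Proposition~\ref{prop:UnderlyingDsigma} corresponds to the identity element of $\Sigma_2$. The resulting operadic composition is therefore the underlying $E_1$-multiplication of $i_e^\ast X$ applied in the order (value at $-1$) times (value at $+1$), and under the $i_e^\ast X\times i_e^\ast X^{op}$ splitting this is exactly the enveloping-algebra multiplication $(a,b)\mapsto a\cdot b$.

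For the action map I would run the analogous argument using Corollary~\ref{cor:EmbModule}: the $C_2$-equivariant embedding $C_2\amalg\ast \hookrightarrow \sigma$ sending the fixed summand to $0$ and the free orbit to $\{-1,+1\}$ is a canonical point of $\Emb^{C_2}(C_2\amalg\ast,\sigma)$. Restricting to $e$ gives the identity element of $\Sigma_3$ by Proposition~\ref{prop:UnderlyingDsigma}, so the induced composite sends the triple (value at $-1$, value at $0$, value at $+1$) to the three-fold $E_1$-product taken in that order. Under the $(i_e^\ast X\times i_e^\ast X^{op})\times i_e^\ast X$ splitting this is precisely the standard bimodule action of the enveloping algebra on $i_e^\ast X$. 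The $\Map(C_2,X)$-module condition from the preceding proposition then translates the operadic associativity of $\ccD(\sigma)$, restricted to $e$, into the statement that the norm and action maps are maps of $i_e^\ast X$-bimodules.

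The main obstacle is purely bookkeeping: one must pin down in the same orientation both the convention identifying the ``at $-1$'' summand with $i_e^\ast X^{op}$ and the convention of which point of $\sigma$ each coset of $C_2$ maps to. The symmetric placements in Proposition~\ref{prop:UnderlyingDsigma} force these to match, since acting by $\gamma$ simultaneously swaps $\pm 1$ and reverses the order of the chosen points along $\sigma$, which is exactly what introduces the $(-)^{op}$ on the second factor. Once this orientation is fixed, the three assertions of the corollary follow by tracing through the identifications.
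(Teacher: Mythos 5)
Your proposal is correct and is essentially the argument the paper intends: the corollary is stated without proof precisely because it follows from tracing the canonical points of $\Emb^{C_2}(kC_2+\epsilon,\sigma)$ through the restriction to the trivial group and the symmetric placements $\{\pm 1,\dots,\pm k\}$ (resp.\ with $0$) of Proposition~\ref{prop:UnderlyingDsigma}, with the $(-)^{op}$ on the second factor arising exactly as you say from $\gamma$ reversing the linear order on $\sigma$. Your unpacking of the bookkeeping matches the paper's one-line justification preceding the enveloping-algebra proposition.
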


\section{Cohomology of \texorpdfstring{$\ccD(\sigma)$-}{signed loop }algebras}
\subsection{$RO(C_{2})$-graded algebra}
To describe the structure seen in the homology of a $\ccD(\sigma)$-algebra, we need to work in the category of $RO(C_{2})$-graded Mackey functors. A wonderful introduction is provided by Lewis and Mandell in their study of equivariant K\"unneth and Universal Coefficients Spectral Sequences \cite{LewisMandell}, especially Sections 2 and 3 therein. We include here only what we need.

\begin{definition}[{\cite[Definition 2.2]{LewisMandell}}]
An $RO(C_{2})$-graded Mackey functor is a collection of Mackey functors $\mM_{\alpha}$, where $\alpha$ ranges over the virtual representations of $C_{2}$.

A map of $RO(C_{2})$-graded Mackey functors $f\colon\mM_{\star}\to\m{N}_{\star}$ is a collection of maps of Mackey functor $f_{\alpha}\colon \mM_{\alpha}\to\m{N}_{\alpha}$, where $\alpha$ ranges over the virtual representations of $C_{2}$.
\end{definition}

There are natural suspension functors which change the gradings in the expected ways.

\begin{definition}
If $\mM_{\star}$ is an $RO(C_{2})$-graded Mackey functor and if $\alpha$ is a virtual representation of $C_{2}$, then let $\Sigma^{\alpha}\mM_{\star}$ be the $RO(C_{2})$-graded Mackey with
\[
(\Sigma^{\alpha}\mM)_{\tau}=\mM_{\tau-\alpha}.
\]
\end{definition}

Just as with ordinary abelian groups, the category of $RO(C_{2})$-graded Mackey functors inherits a closed symmetric monoidal structure from Mackey functors.

\begin{proposition}[{\cite[Proposition 2.5]{LewisMandell}}]
There is a close symmetric monoidal category structure on the category of $RO(C_{2})$-graded Mackey functors extending the box product on Mackey functors.
\end{proposition}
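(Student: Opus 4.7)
The plan is to mimic the construction of the graded tensor product of graded abelian groups, replacing the tensor product with the box product $\square$ on Mackey functors (whose existence as a closed symmetric monoidal product is classical, via Day convolution over the Burnside category). For two $RO(C_{2})$-graded Mackey functors $\mM_{\star}$ and $\mN_{\star}$, set
\[
(\mM \square \mN)_{\alpha} = \bigoplus_{\beta \in RO(C_{2})} \mM_{\beta} \square \mN_{\alpha - \beta},
\]
where the coproduct is formed in the cocomplete abelian category of Mackey functors. Take the unit to be the Burnside Mackey functor $\mA$ concentrated in degree zero. Recovery of the box product of Mackey functors from the full subcategory of gradings concentrated in degree zero is then immediate.

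The associator, unit, and symmetry isomorphisms are assembled summand-by-summand from their counterparts for the box product of Mackey functors, using the evident reindexing bijections on the sets of decompositions $\alpha = \beta + \gamma$ (respectively $\alpha = \beta + \gamma + \delta$). The pentagon, triangle, and hexagon coherence diagrams then commute because they commute after projection to each summand, where they reduce to the known coherences for Mackey functors. Since we are working at the purely algebraic level of Mackey functors rather than spectra, no signs intervene in the symmetry constraint.

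For the closed structure, define the internal hom by
\[
\underline{\Hom}(\mM_{\star}, \mN_{\star})_{\alpha} = \prod_{\beta \in RO(C_{2})} \underline{\Hom}_{\Mackey}(\mM_{\beta}, \mN_{\alpha + \beta}),
\]
using the internal hom of the symmetric monoidal category of Mackey functors. The adjunction $\Hom(\m{L} \square \mM, \mN) \cong \Hom(\m{L}, \underline{\Hom}(\mM, \mN))$ is then verified grading by grading: a degree-$\alpha$ map out of $\m{L} \square \mM$ is a coherent family of Mackey-functor maps $\m{L}_{\beta} \square \mM_{\gamma} \to \mN_{\alpha + \beta + \gamma}$, which under the Mackey-functor hom-tensor adjunction corresponds exactly to a degree-$\alpha$ map $\m{L}_{\star} \to \underline{\Hom}(\mM, \mN)_{\star}$.

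The main obstacle is purely bookkeeping: $RO(C_{2}) \cong \mathbb{Z}^{2}$ is infinite, so the coproducts and products above are genuinely infinite, and one must confirm that $\square$ distributes over arbitrary coproducts of Mackey functors (which holds because $\square$ is itself closed) in order to run the adjunction argument uniformly across gradings. A cleaner but less explicit alternative is to identify $RO(C_{2})$-graded Mackey functors with the Day convolution of the discrete symmetric monoidal category $RO(C_{2})$ and the closed symmetric monoidal category of Mackey functors, whereupon the desired structure is produced for free by the general Day convolution machinery.
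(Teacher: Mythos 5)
The paper offers no proof of this proposition at all --- it is quoted verbatim from Lewis--Mandell --- and your construction, namely Day convolution of the discrete symmetric monoidal category $RO(C_{2})$ with the closed symmetric monoidal category of Mackey functors,
\[
(\mM\Box\mN)_{\alpha}=\bigoplus_{\beta}\mM_{\beta}\Box\mN_{\alpha-\beta},
\qquad
\m{\Hom}(\mM,\mN)_{\alpha}=\prod_{\beta}\m{\Hom}(\mM_{\beta},\mN_{\alpha+\beta}),
\]
is exactly the one used in the cited source. The unit, the associator, and the hom-tensor adjunction are assembled correctly, and your remark that $\Box$ preserves coproducts in each variable because it is closed is the right justification for passing the infinite direct sums through the adjunction.

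The one assertion you should not make so breezily is that ``no signs intervene in the symmetry constraint.'' That is precisely the subtlety the paper flags in the remark immediately following the proposition: the symmetry one actually wants twists the swap $\mM_{\alpha}\Box\mN_{\beta}\to\mN_{\beta}\Box\mM_{\alpha}$ by a unit of the Burnside ring $A(C_{2})$ recording the transposition $S^{\alpha}\wedge S^{\beta}\cong S^{\beta}\wedge S^{\alpha}$, and commuting $\sigma$ past $\sigma$ contributes an exotic unit rather than merely $\pm 1$. The naive untwisted swap does give \emph{a} closed symmetric monoidal structure, so the literal statement survives your argument; but it is not the structure for which $\m{\pi}_{\star}$ of a commutative ring spectrum is a commutative monoid, and that is how the proposition is deployed later in the paper (e.g.\ to make $\mB_{\star}$ a commutative $RO(C_{2})$-graded Green functor via lax monoidality of $\m{\pi}_{\star}$). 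With the twisted symmetry, the hexagon axiom amounts to a cocycle condition on these Burnside-ring units; that is the only non-formal coherence check in the whole construction, and it is the one your write-up waves away.
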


\begin{definition}
An {\defemph{$RO(C_{2})$-graded Green functor}} is an associative monoid for the box product in $RO(C_{2})$-graded Mackey functors.
\end{definition}

\begin{proposition}[{\cite[Proposition 3.10(a)]{LewisMandell}}]
If $\m{R}_{\star}$ is a commutative $RO(C_{2})$-graded Green functor, then there is closed, symmetric monoidal category of $\m{R}_{\star}$-modules.
\end{proposition}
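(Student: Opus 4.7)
The statement is a specialization of a completely general principle: modules over a commutative monoid in any closed symmetric monoidal category with enough (co)limits inherit a closed symmetric monoidal structure of their own. My plan is to verify that the ambient category of $RO(C_{2})$-graded Mackey functors supplies exactly the (co)limits needed and then let the standard abstract construction run.

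First I would define the relative box product $\m{M}_{\star}\square_{\m{R}_{\star}}\m{N}_{\star}$ as the coequalizer of the two natural maps
\[
\m{M}_{\star}\square \m{R}_{\star}\square\m{N}_{\star}\rightrightarrows\m{M}_{\star}\square\m{N}_{\star}
\]
given by the right $\m{R}_{\star}$-action on $\m{M}_{\star}$ and the left action on $\m{N}_{\star}$. Since colimits of $RO(C_{2})$-graded Mackey functors are computed grading-wise, and in each grading the category of Mackey functors is Grothendieck abelian, this coequalizer exists. Commutativity of $\m{R}_{\star}$ produces a canonical $\m{R}_{\star}$-module structure on the result, and $\m{R}_{\star}$ serves as unit by the unit axiom for modules. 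Associativity of $\square_{\m{R}_{\star}}$ follows from associativity in the ambient closed symmetric monoidal structure on $RO(C_{2})$-graded Mackey functors, combined with the fact that $\square$ preserves coequalizers in each variable (being a left adjoint, by closedness of the ambient category).

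For the closed structure, I would define the internal Hom $\underline{\Hom}_{\m{R}_{\star}}(\m{M}_{\star},\m{N}_{\star})$ as the equalizer of the two natural maps
\[
\underline{\Hom}(\m{M}_{\star},\m{N}_{\star})\rightrightarrows\underline{\Hom}(\m{R}_{\star}\square\m{M}_{\star},\m{N}_{\star})
\]
obtained by precomposing with the action on $\m{M}_{\star}$ versus postcomposing with the action on $\m{N}_{\star}$. The desired adjunction
\[
\Hom_{\m{R}_{\star}}(\m{L}_{\star}\square_{\m{R}_{\star}}\m{M}_{\star},\m{N}_{\star})\cong \Hom_{\m{R}_{\star}}(\m{L}_{\star},\underline{\Hom}_{\m{R}_{\star}}(\m{M}_{\star},\m{N}_{\star}))
\]
then follows from the ambient tensor--hom adjunction by a universal property chase, comparing the coequalizer presentation of $\square_{\m{R}_{\star}}$ on the left with the equalizer presentation of $\underline{\Hom}_{\m{R}_{\star}}$ on the right.

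The main delicate point, and essentially the only place commutativity of $\m{R}_{\star}$ is really used, is verifying that the symmetry isomorphism $\m{M}_{\star}\square\m{N}_{\star}\cong \m{N}_{\star}\square\m{M}_{\star}$ descends to the quotient $\square_{\m{R}_{\star}}$: this requires that right action followed by the twist agrees with the left action, which is precisely commutativity of $\m{R}_{\star}$. Everything else is formal from the ambient closed symmetric monoidal structure on $RO(C_{2})$-graded Mackey functors.
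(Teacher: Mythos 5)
Your argument is correct and is exactly the standard one: the paper offers no proof of its own here, simply citing Lewis--Mandell, and their Proposition 3.10(a) is proved by precisely this general construction (relative box product as a reflexive coequalizer, internal Hom as an equalizer, with $\square$ preserving colimits in each variable because the ambient category is closed). The only point worth keeping in mind is that ``commutative'' must be taken with respect to the $RO(C_{2})$-graded symmetry isomorphism, which twists by a unit of the Burnside ring when sign representations are commuted past each other (as the paper remarks later); your proof correctly defers this to the ambient symmetric monoidal structure, so no gap results.
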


\begin{remark}
There is a subtlety as to what graded commutativity means in the $RO(C_{2})$-graded setting, since commuting the sign representation past itself introduces an exotic element of the Burnside ring. In the applications below, this unit of the Burnside ring maps to $-1$, and hence this is ordinary commutativity.
\end{remark}

We can further unpack the internal Hom to understand maps in this category a little better. We introduce some useful notation.

\begin{definition}
Let $T$ be a finite $G$-set, and let $\mA^{T}$ denote the Mackey functor:
\[
\mA^{T}(T'):=\mA(T\times T').
\]
Viewing this as an $RO(C_{2})$-graded Mackey functor in degree zero, if $\m{R}_{\star}$ is a commutative $RO(C_{2})$-graded Green functor, let
\[
\mR_{\star}^{T}:=\m{R}_{\star}\Box\mA^{T}.
\]
Finally, if $\mM_{\star}$ is an $\mR_{\star}$-module, then let
\[
\mM_{\star}^{T}:=\mM_{\star}\Box \mR_{\star}^{T}.
\]
\end{definition}

\begin{proposition}[{\cite[Proposition 4.2]{LewisMandell}}]
Let $\mM_{\star}$ be an $\m{R}_{\star}$-module. Then we have a natural isomorphism
\[
\m{\Hom}_{\m{R}_{\star}}^{0}(\Sigma^{\alpha}\m{R}_{\star}^{T},\mM_{\star})(C_{2}/C_{2})\cong \mM_{\alpha}(T).
\]
\end{proposition}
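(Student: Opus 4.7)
The plan is to prove this as a two-step Yoneda computation: first strip off the $\m{R}_\star$ using the free-forgetful adjunction, then strip off the $\Sigma^\alpha$ and the $\mA^T$ using representability in graded Mackey functors.

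First I would reduce the grading shift. By definition of the internal Hom of $\m{R}_\star$-modules and of $\Sigma^\alpha$, a map $\Sigma^\alpha N \to \mM_\star$ of $\m{R}_\star$-modules in degree $0$ is the same data as a map $N \to \mM_\star$ of $\m{R}_\star$-modules in degree $\alpha$; concretely, the equality
\[
\m{\Hom}_{\m{R}_\star}^{0}(\Sigma^\alpha \m{R}_\star^{T}, \mM_\star) = \m{\Hom}_{\m{R}_\star}^{\alpha}(\m{R}_\star^{T}, \mM_\star)
\]
is forced by matching the grading-preserving condition $f_\beta\colon (\Sigma^\alpha \m{R}_\star^T)_\beta \to \mM_\beta$ with $f_{\beta}\colon (\m{R}_\star^T)_{\beta-\alpha} \to \mM_\beta$.

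Next I would apply the free-forgetful adjunction. Since $\m{R}_\star^T = \m{R}_\star \Box \mA^T$ and $\mA^T$ is placed in degree $0$, this is the free $\m{R}_\star$-module on the $RO(C_2)$-graded Mackey functor $\mA^T$ (concentrated in degree $0$). Hence
\[
\m{\Hom}_{\m{R}_\star}^{\alpha}(\m{R}_\star \Box \mA^T, \mM_\star) \cong \m{\Hom}^{\alpha}(\mA^T, \mM_\star),
\]
where the right-hand side is the internal Hom in $RO(C_2)$-graded Mackey functors. Because the source is concentrated in degree $0$, a degree $\alpha$ map out of it is precisely a map of underlying Mackey functors into $\mM_\alpha$, so the right-hand side is the internal Mackey-functor Hom $\m{\Hom}(\mA^T, \mM_\alpha)$.

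Finally I would invoke Yoneda for $\mA^T$. The Mackey functor $\mA^T(-) = \mA(T\times -)$ is the representable functor for evaluation at $T$: for any Mackey functor $\m{N}$,
\[
\m{\Hom}(\mA^T, \m{N})(C_2/C_2) \cong \m{N}(T),
\]
which is a standard consequence of writing $T$ as a disjoint union of orbits and combining the additivity of $\m{N}$ with the representability of each $\mA^{C_2/H} = \mA\Box \mA^{C_2/H}$. Chaining the three isomorphisms above evaluated at $C_2/C_2$ yields $\mM_\alpha(T)$, and naturality in $\mM_\star$ (as well as compatibility with the Mackey structure) is automatic from the construction.

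The main subtlety, and the only place any real work happens, is checking the Yoneda step: one needs that $\mA^T$ really is the free Mackey functor representing evaluation at $T$, i.e., that $\Hom(\mA^T, \m{N}) \cong \m{N}(T)$, and that this identification is compatible with the box product (so that step two's adjunction is genuinely the free-forgetful adjunction, not something weaker). Everything else is a bookkeeping manipulation of gradings and definitions, so I would spend the bulk of the written proof nailing down this Yoneda/representability point and then present the three-step chain of natural isomorphisms as a display.
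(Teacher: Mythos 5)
Your proposal is correct: the degree-shift bookkeeping, the free--forgetful adjunction for $\m{R}_{\star}\Box\mA^{T}$, and the Yoneda/representability of $\mA^{T}$ (so that $\Hom(\mA^{T},\m{N})\cong\m{N}(T)$) are exactly the ingredients needed, and you rightly flag the Yoneda step as the only point requiring real verification. Note that the paper itself gives no proof, quoting the statement directly from Lewis--Mandell, and your argument is the standard one underlying their Proposition 4.2, so there is nothing to compare beyond that.
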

In other words, maps out of the projective $\m{R}$-modules $\Sigma^{\alpha} \m{R}_{\star}^{T}$ recover the value at $T$ of the $\alpha$th Mackey functor of an $\mR_{\star}$-module.

The shifts $\mR_{\star}^{T}$ have a second nice interaction with the symmetric monoidal structure.
\begin{proposition}\label{prop:ProductsofInduceds}
For any finite $G$-sets $T$ and $T'$ and for any $\m{R}_{\star}$-module $\mM_{\star}$, we have natural isomorphisms
\[
\mR_{\star}^{T}\Box_{\mR_{\star}}\mM_{\star}^{T'}\cong\mM_{\star}^{T\times T'}.
\]
\end{proposition}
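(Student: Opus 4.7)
The plan is to reduce the assertion to the fundamental Mackey-functor identity $\mA^T \Box \mA^{T'} \cong \mA^{T \times T'}$ via formal manipulation of box products. Both sides of the proposition are built by boxing $\mM_\star$ with representables $\mA^{(-)}$ placed in $RO(C_2)$-grading zero, so the grading plays only a bookkeeping role.

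First I would unpack the definitions. Since $\mR_\star^T = \mR_\star \Box \mA^T$ is the extension of scalars of $\mA^T$ along the unit map $\mA \to \mR_\star$, equivalently the free $\mR_\star$-module on the $G$-set $T$, the standard adjunction between extension and restriction of scalars gives, for any $\mR_\star$-module $\m{N}_\star$,
\[
\mR_\star^T \Box_{\mR_\star} \m{N}_\star \cong \mA^T \Box \m{N}_\star.
\]
Applying this with $\m{N}_\star = \mM_\star^{T'} \cong \mM_\star \Box \mA^{T'}$ (itself obtained by the same extension-of-scalars identity) and using associativity and commutativity of $\Box$, the left-hand side of the proposition collapses to
\[
\mR_\star^T \Box_{\mR_\star} \mM_\star^{T'} \cong \mM_\star \Box (\mA^T \Box \mA^{T'}).
\]

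Next I would invoke $\mA^T \Box \mA^{T'} \cong \mA^{T \times T'}$. This is the Day-convolution property for Mackey functors: $\Box$ is the Day convolution along Cartesian product of finite $G$-sets on the Burnside category, so the representables $\mA^T$ box together as their corepresenting objects do. A Yoneda check against a test Mackey functor $\m{N}$, using the basic identity $\Hom(\mA^S, \m{N}) = \m{N}(S)$, verifies the claim. Substituting gives
\[
\mR_\star^T \Box_{\mR_\star} \mM_\star^{T'} \cong \mM_\star \Box \mA^{T \times T'} \cong \mM_\star^{T \times T'},
\]
and naturality in $T$, $T'$, and $\mM_\star$ is automatic from the Yoneda identifications.

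The hard part is really just careful bookkeeping: distinguishing the absolute box product from $\Box_{\mR_\star}$, tracking which factor carries the $\mR_\star$-action at each stage, and verifying that the relative tensor product really does absorb the internal copy of $\mR_\star$ in the correct way. No sign or permutation subtleties arise because everything lies in grading zero, so once the Day-convolution identity is in hand the proof is essentially formal.
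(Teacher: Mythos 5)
Your argument is correct: the paper states this proposition without proof (it is a formal consequence of the Lewis--Mandell framework), and the two ingredients you use --- the cancellation $\mR_{\star}^{T}\Box_{\mR_{\star}}\m{N}_{\star}\cong \mA^{T}\Box\m{N}_{\star}$ for free modules, and the Day-convolution identity $\mA^{T}\Box\mA^{T'}\cong\mA^{T\times T'}$ for representables --- are exactly the standard ones. The only caveat is your identification $\mM_{\star}^{T'}\cong\mM_{\star}\Box\mA^{T'}$, which reads the defining box product in $\mM_{\star}^{T'}:=\mM_{\star}\Box\mR_{\star}^{T'}$ as relative to $\mR_{\star}$; this is surely the intended reading, and in any case the same cancellation argument gives the stated isomorphism under either interpretation.
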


In our cases of interest below, we will be considering only special modules.

\begin{definition}
Let $\m{R}_{\star}$ be an $RO(C_{2})$-graded Green functor, and let $\mM_{\star}$ be an $\m{R}_{\star}$-module. Then we say that $\mM_{\star}$ is free if there is an isomorphism of $\m{R}_{\star}$-modules
\[
\mM_{\star}\cong\left(\bigoplus_{s\in S_{*}}\Sigma^{\alpha_{s}}\mR_{\star}\right)\oplus\left(\bigoplus_{t\in S_{C_{2}}}\Sigma^{\beta_{t}}\mR_{\star}^{C_{2}}\right).
\]
\end{definition}

\subsection{Coinduction and the norm}
Any algebra $X$ over $D(\sigma)$ in spaces is endowed with operadic transfer maps
\[
\Map_e(C_2,X)\to X.
\]
By naturality of homology, this gives us a map
\[
\m{H}_\star \big(\Map_e(C_2,X);\mM\big)\to \m{H}_{\star} (X;\mM)
\]
for any Mackey functor $\mM$, which is a kind of twisted analogue of the Pontryagin product on the homology of an associative algebra in spaces. We make this precise here.

\begin{definition}
A {\defemph{graded set}} is a set $S$ together with a map $S\xrightarrow{deg} \mathbb Z$. 
\end{definition}

%There is an obvious adjunction between graded sets and graded vector spaces which we will use without comment. There is a slightly less obvious adjunction between graded Mackey functors and $\m{RO}(C_2)$-graded sets. Recall that modules over a Green functor $\m{R}$ are tensored over sets by defining
%\[
%\m{R}_{S}:=\colim_{S'\subset finite} \m{R}_{S'}.
%\]
%
%\begin{definition}
%Let $\m{R}$ be a Green functor and let $S$ be an $\m{RO}(C_2)$-graded set. Then define
%\[
%\m{R}_{S}:=\left(\bigoplus_{V\in RO(C_2)}\Sigma^{V} \m{R}_{\m{deg}^{-1}(V)}\right)\oplus \left(\bigoplus_{n\in RO(e)}\Sigma^n \m{R}_{C_2\times \m{deg}^{-1}(n)}\right).
%\]
%\end{definition}

This allows us to describe what we see in algebra for the Bredon homology of coinduction. We remark that there is almost the expected universal property: the difficulty is that maps out of $\m{R}$ itself in degree $0$ is a priori non-zero for a great many $RO(C_2)$-graded suspensions of $\m{R}$.

\begin{definition}
Let $S$ be a graded set. We define a kind of $\m{RO}(C_2)$-grading on the $C_2$-set $\Map_e(C_2,S)$ as follows. If $f\in \Map_e(C_2,S)$, then define
\[
\m{\deg}(f):=\begin{cases}
\deg\big(f(e)\big)+ \deg\big(f(g)\big) & f(e)\neq f(g) \\
\deg\big(f(e)\big)\rho_2 & f(e)=f(g).
\end{cases}
\]
\end{definition}

In particular, for for each point $f\in \Map_e(C_2,S)$, we have assigned a virtual representation for the subgroup $\Stab(f)$. Since $C_2$ is abelian, this is all we need to form wedges of the form
\[
\bigvee_{f\in\Map_e(C_2,S)} S^{\m{\deg}(f)}\in\Sp^{C_2}.
\]

\begin{theorem}\label{thm:TwistedKunneth}
Let $R$ be a ring and let $X$ be a space such that the homology of $X$ with coefficients in $R$ is free on a graded set $S$. Then the Bredon homology of $\Map_e(C_2,X)$ with coefficients in $N_e^{C_2} R$ is free on the $\m{RO}(C_2)$-graded set $\Map_e(C_2,S)$.
\end{theorem}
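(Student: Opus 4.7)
The plan is to work at the level of $C_2$-spectra and exploit the strong symmetric monoidality of the norm functor $N_e^{C_2}\colon\Sp\to\Sp^{C_2}$. The first move is to recognize that the suspension $C_2$-spectrum of $\Map_e(C_2,X)$ with its swap action is precisely $N_e^{C_2}(\Sigma^\infty_+X)$, and that the coefficient Green functor satisfies $H\m{N_e^{C_2}R}\simeq N_e^{C_2}(HR)$ by the universal property of the norm on commutative Eilenberg--Mac~Lane ring spectra. Strong monoidality of $N_e^{C_2}$ then gives
\[
H\m{N_e^{C_2}R}\wedge\Map_e(C_2,X)_+ \;\simeq\; N_e^{C_2}\bigl(HR\wedge\Sigma^\infty_+X\bigr),
\]
so the Bredon homology in question is the $RO(C_2)$-graded equivariant homotopy of $N_e^{C_2}(HR\wedge\Sigma^\infty_+X)$.

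Next, I would promote the freeness of $H_*(X;R)$ on the graded set $S$ to an equivalence of $HR$-modules
\[
HR\wedge\Sigma^\infty_+X \;\simeq\; \bigvee_{s\in S}\Sigma^{\deg s}HR,
\]
and then apply the ``norm of a wedge'' formula. Expanding $\bigl(\bigvee_s A_s\bigr)\wedge\bigl(\bigvee_s A_s\bigr)$ and organizing the summands by the swap $C_2$-action yields
\[
N_e^{C_2}\Bigl(\bigvee_{s\in S}A_s\Bigr) \;\simeq\; \bigvee_{s\in S}N_e^{C_2}(A_s) \;\vee\; \bigvee_{\{s,s'\},\,s\neq s'}(C_2)_+\wedge(A_s\wedge A_{s'}),
\]
where the diagonal summands are genuine norms and the off-diagonal pairs contribute free (induced) $C_2$-orbits.

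The third step is to match each piece with a free $\m{N_e^{C_2}R}$-module generator indexed by a point of $\Map_e(C_2,S)$. A diagonal summand computes as $N_e^{C_2}(\Sigma^{\deg s}HR)\simeq\Sigma^{\deg(s)\rho_2}H\m{N_e^{C_2}R}$, which matches the constant function $f\equiv s$ with $\m{\deg}(f)=\deg(s)\rho_{2}$. An off-diagonal pair $\{s,s'\}$ contributes $(C_2)_+\wedge\Sigma^{\deg s+\deg s'}(HR\wedge HR)\simeq\Sigma^{\deg s+\deg s'}H\m{N_e^{C_2}R}^{C_2}$, accounting for the two non-constant maps in the orbit $\{(s,s'),(s',s)\}$ with $\m{\deg}=\deg s+\deg s'$. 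Taking $\pi_\star^{C_2}$ then produces the claimed free $\m{N_e^{C_2}R}_\star$-module structure on $\Map_e(C_2,S)$.

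The hard part will be the splitting $HR\wedge\Sigma^\infty_+X\simeq\bigvee_s\Sigma^{\deg s}HR$ used above: lifting an abstract freeness on $R$-homology to an honest $HR$-module equivalence requires enough flatness of $R$ over $\mathbb{Z}$ so that free $R$-modules lift to wedges of Eilenberg--Mac~Lane spectra and $HR\wedge HR$ is concentrated in degree zero (automatic when $R$ is a PID, for instance). A secondary technical check is that induction realizes the $(-)^{C_2}$-shift, i.e.\ $(C_2)_+\wedge H\m{M}\simeq H(\m{M}^{C_2})$, which follows by comparing Mackey functor values at $C_2/e$ and $C_2/C_2$.
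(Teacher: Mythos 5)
Your overall strategy is the same as the paper's: split $HR\wedge\Sigma^{\infty}_{+}X$ as a wedge of shifted copies of $HR$ using the free basis $S$, apply $N_{e}^{C_{2}}$, use that $\Sigma^{\infty}_{+}$ commutes with the norm, and expand $N_{e}^{C_{2}}$ of a wedge by the distributive law into a wedge indexed by $\Map_{e}(C_{2},S)$ (your diagonal/off-diagonal bookkeeping is just that indexing written out). However, there is a genuine gap at the coefficient-identification step: the asserted equivalence $H\m{N_{e}^{C_{2}}R}\simeq N_{e}^{C_{2}}(HR)$ is false in essentially every case of interest. The underlying spectrum of $N_{e}^{C_{2}}HR$ is $HR\wedge HR$, whose homotopy is almost never concentrated in degree zero --- for $R=\F_{2}$ it is the dual Steenrod algebra, and for $R=\Z$ it is likewise highly nontrivial --- so your proposed repair (``automatic when $R$ is a PID'') does not hold either. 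The paper's proof avoids this by using only a \emph{map}: $N_{e}^{C_{2}}HR$ is a $(-1)$-connected commutative $C_{2}$-ring spectrum with $\m{\pi}_{0}N_{e}^{C_{2}}HR\cong N_{e}^{C_{2}}R$ (the Mazur--Hoyer norm of Mackey functors), and one base-changes the normed module splitting along the zeroth Postnikov truncation $N_{e}^{C_{2}}HR\to HN_{e}^{C_{2}}R$. With that correction, each wedge summand $S^{\m{\deg}(f)}\wedge HN_{e}^{C_{2}}R$ (respectively $C_{2+}\wedge_{\Stab(f)}(\cdots)$ for non-constant $f$) contributes exactly the free generator you describe, and the theorem follows.

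One smaller point: your worry that the splitting $HR\wedge\Sigma^{\infty}_{+}X\simeq\bigvee_{s}\Sigma^{\deg s}HR$ ``requires flatness of $R$ over $\Z$'' is a red herring. Since $H_{*}(X;R)=\pi_{*}(HR\wedge\Sigma^{\infty}_{+}X)$ is assumed free as a graded $R$-module, you may represent each basis element by a map $S^{\deg s}\to HR\wedge\Sigma^{\infty}_{+}X$, extend $HR$-linearly, and the resulting wedge map is a $\pi_{*}$-isomorphism of $HR$-modules; no hypothesis on $R$ over $\Z$ is needed. The only place any subtlety about $HR\wedge HR$ enters is the coefficient issue above, which the Postnikov base change handles.
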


\begin{proof}
The graded set $S$ gives a weak equivalence of $HR$-module spectra
\[
\bigvee_{s\in S} S^{\deg(s)}\wedge HR\xrightarrow{\simeq} \Sigma^{\infty}_{+}X\wedge HR.
\]
If we apply the norm $N_e^{C_2}$ to both sides, then we deduce an equivalence of $N_e^{C_2} HR$-module spectra
\[
N_e^{C_2}\left(\bigvee_{s\in S} S^{\deg(s)}\right)\wedge N_e^{C_2}HR\xrightarrow{\simeq} N_e^{C_2} (\Sigma^{\infty}_{+}X)\wedge N_e^{C_2}HR.
\]
The distributive law applied to the source gives an isomorphism of $N_e^{C_2} HR$-modules
\[
N_e^{C_2}\left(\bigvee_{s\in S} S^{\deg(s)}\right)\wedge N_{e}^{C_2}HR\simeq \bigvee_{f\in\Map_e(C_2,S)} S^{\m{\deg}(f)} \wedge N_e^{C_2} HR.
\]
Similarly, since the infinite suspension is strong $G$-symmetric monoidal, we have a natural equivalence
\[
N_e^{C_2} \Sigma^{\infty}_+ X\simeq \Sigma^{\infty} \Map_e(C_2,X).
\]
Finally, since $HR$ can be modeled by a commutative ring spectrum, $N_e^{C_2}HR$ is a $C_2$-equivariant commutative ring spectrum, so the category of modules thereover is symmetric monoidal. The map from a $(-1)$-connected $C_2$-equivariant commutative ring spectrum to its zeroth Postnikov section preserves $C_2$-equivariant commutative ring spectra, and 
\[
\m{\pi}_0 N_e^{C_2} HR\cong N_e^{C_2} R,
\]
where the righthand norm is the Mazur-Hoyer norm on Mackey functors \cite{MazurArxiv}, \cite{HoyerThesis}. Base-changing along the zeroth Postnikov map
\[
N_e^{C_2} HR\to H N_e^{C_2} R
\]
gives the desired result.
\end{proof}

\begin{remark}
There is also a coordinate-free version of this result. We have a canonical isomorphism of simplicial $G$-sets
\[
\Map_e\big({C_2},\Sing_\bullet(X)\big)\cong \Sing_{\bullet}\big(\Map_e(C_2,X)\big).
\]
In particular, this produces a canonical isomorphism of simplicial Mackey functors upon applying the Burnside Mackey functor (or more generally, any other coefficients). The left-hand side is essentially the definition of the Mazur-Hoyer norm is simplicial Mackey functors. We will return to this more generally in a subsequent paper.
\end{remark}

We pause here to clarify a small notational point. The set over which we take our wedge in the proof of Theorem~\ref{thm:TwistedKunneth} is a $C_{2}$-set. In particular, the action on the indexing set is combined with the action on the individual factors. Thus if $f\in\Map_{e}(C_{2},S)$ has $f(e)\neq f(g)$, then the summands corresponding to $f$ and to $g\cdot f$ are switched by the group action. Thus we could rewrite the sum as
\[
\bigvee_{f\in\Map_{e}(C_{2},S)/C_{2}} C_{2+}\wedge_{Stab(f)} S^{\m{\deg}(f)}\wedge HN_{e}^{C_{2}}R.
\]

Since cohomology with coefficients in a field is always free, this gives us a nice family of space for which we know Bredon homology groups.

\begin{definition}
Let $\m{B}=N_{e}^{C_{2}}\mathbb F_{2}$. This is the Green functor
\[
\xymatrix{
%{\m{B}(C_{2}/C_{2})}
%	\ar@(l,l)[d]_{\res}
%	&
{\Z/4}
	\ar@(l,l)[d]_{1}
	\\
%{\m{B}(C_{2}/e)}
%	\ar@(r,r)[u]_{tr}
%	&
{\Z/2}
	\ar@(r,r)[u]_{2}
}
\]
\end{definition}

\begin{definition}
Let 
\[
\mB_{\star}:=\m{\pi}_{\star}H\mB.
\]
\end{definition}

\begin{proposition}
The $RO(C_{2})$-graded Mackey functor $\m{B}$ is naturally a commutative $RO(C_{2})$-graded Green functor.
\end{proposition}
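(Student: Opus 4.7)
The plan is to exhibit $H\m{B}$ as a $C_{2}$-equivariant commutative ring spectrum; then $\m{B}_{\star}=\m{\pi}_{\star}H\m{B}$ automatically inherits the structure of an $RO(C_{2})$-graded Green functor by applying equivariant homotopy, and commutativity requires one explicit Burnside-ring calculation. The key input is that the multiplicative norm $N_{e}^{C_{2}}\colon \Com(\Sp)\to \Com(\Sp^{C_{2}})$ is strong symmetric monoidal, a fact already used in the proof of Theorem~\ref{thm:TwistedKunneth}.

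First I would observe that since $H\mathbb{F}_{2}$ is a non-equivariant commutative ring spectrum, $N_{e}^{C_{2}}H\mathbb{F}_{2}$ is a $(-1)$-connected $C_{2}$-equivariant commutative ring spectrum, whose zeroth Mackey functor homotopy group is $N_{e}^{C_{2}}\mathbb{F}_{2}=\m{B}$ by the identification of $\m{\pi}_{0}$ of a norm with the Mazur--Hoyer norm on Mackey functors. Next, passing to the zeroth Postnikov section preserves commutative ring structure on $(-1)$-connected objects (as in the proof of Theorem~\ref{thm:TwistedKunneth}), producing a map
\[
N_{e}^{C_{2}}H\mathbb{F}_{2}\to H\m{B}
\]
of $C_{2}$-equivariant commutative ring spectra. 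Applying $\m{\pi}_{\star}$ to the commutative ring structure on $H\m{B}$ then yields the associative multiplication and unit making $\m{B}_{\star}$ an $RO(C_{2})$-graded Green functor.

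The remaining point is interpreting graded commutativity in the $RO(C_{2})$-graded setting, as flagged in the Remark preceding Proposition~\ref{prop:ProductsofInduceds}. The swap $\tau_{\sigma,\sigma}\colon S^{\sigma}\wedge S^{\sigma}\to S^{\sigma}\wedge S^{\sigma}$ represents the element $\epsilon=1-[C_{2}]\in A(C_{2})=\pi_{0}^{C_{2}}\mathbb{S}$, which enters whenever we commute two elements of sign-valued degree. I would verify that the unit map $A(C_{2})\to\m{B}(C_{2}/C_{2})=\Z/4$ sends $\epsilon$ to $-1$: since $[C_{2}]$ acts on any Mackey functor as $\tr\circ\res$, and on $\m{B}$ we compute $\tr\bigl(\res(1)\bigr)=\tr(1)=2$, we get $\epsilon\mapsto 1-2=-1\pmod{4}$. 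Thus the intrinsic equivariant graded commutativity of $\m{B}_{\star}$ reduces to ordinary signed commutativity, giving the proposition. The main obstacle is precisely this Burnside-ring identification; everything else is formal from the strong symmetric monoidality of $N_{e}^{C_{2}}$ and the multiplicative behavior of the Postnikov tower.
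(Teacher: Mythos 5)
Your proof is correct, and its skeleton matches the paper's: both arguments reduce the proposition to (i) exhibiting $H\m{B}$ as a $C_{2}$-equivariant commutative ring spectrum and (ii) invoking lax monoidality of $\m{\pi}_{\star}$. The one genuine difference is in step (i). The paper simply cites Ullman's general result that the \SESM{} spectrum of a (commutative) Green functor is a commutative ring spectrum; you instead build the ring structure on $H\m{B}$ by taking the zeroth Postnikov truncation of $N_{e}^{C_{2}}H\F_{2}$, exactly as in the proof of Theorem~\ref{thm:TwistedKunneth}. Your route is more self-contained (it reuses machinery the paper has already deployed) but is special to the fact that $\m{B}=N_{e}^{C_{2}}\F_{2}$ is itself a norm, whereas Ullman's theorem applies to arbitrary Green functors. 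Your treatment of commutativity is also more explicit than the paper's: the paper disposes of the sign rule in a one-line remark ($\m{B}$ is a quotient of the constant Mackey functor $\Z$, so only underlying dimensions matter), while you actually identify the class of the swap on $S^{\sigma}\wedge S^{\sigma}$ as $1-[C_{2}]\in A(C_{2})$ and compute its image $1-\tr(\res(1))=1-2=-1$ in $\Z/4$; both the Burnside-ring class and the evaluation are correct and agree with the paper's assertion that the relevant unit maps to $-1$. Strictly speaking the proposition only asserts commutativity in the intrinsic $RO(C_{2})$-graded sense, which already follows formally from the commutativity of $H\m{B}$, so your sign computation is supplementary rather than necessary, but it is the right verification of the surrounding remarks.
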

\begin{proof}
The {\EM} spectrum $H\mB$ is a commutative ring spectrum by work of Ullman. Since $\m{\pi}_{\star}$ is a lax monoidal functor (see, for a complete proof \cite[Appendix A]{LewisMandell}), the result follows.
\end{proof}
Since $\m{B}$ is a quotient of the constant Mackey functor $\Z$, the sign rule here is the ordinary one, using only the underlying dimension of the representations. 

\begin{corollary}
If $X$ is any space, then the Bredon homology of $\Map_e(C_2,X)$ with coefficients in $\m{B}$ is a free $\mB_{\star}$-module.
\end{corollary}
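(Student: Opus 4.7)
The plan is to recognize this as a direct application of Theorem~\ref{thm:TwistedKunneth} with $R=\mathbb F_{2}$, combined with the fact that homology with field coefficients is always free. First I would note that since $\mathbb F_{2}$ is a field, the singular homology $H_{\ast}(X;\mathbb F_{2})$ is always a free $\mathbb F_{2}$-module, and so by choosing any graded basis I obtain a graded set $S$ on which it is free. Theorem~\ref{thm:TwistedKunneth} then immediately applies, so the Bredon homology $\m{H}_{\star}\big(\Map_{e}(C_{2},X);\m B\big)$ is free on the $RO(C_{2})$-graded set $\Map_{e}(C_{2},S)$ in the sense of the wedge decomposition produced in the theorem's proof.

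What remains is to verify that ``free on an $RO(C_{2})$-graded set'' in this sense matches the formal definition of a free $\m B_{\star}$-module. Here I would partition $\Map_{e}(C_{2},S)$ into $C_{2}$-orbits, as in the remark immediately following the proof of Theorem~\ref{thm:TwistedKunneth}. The fixed points of the $C_{2}$-action on $\Map_{e}(C_{2},S)$ are the constant functions, and each such $f\equiv s$ contributes a summand of the form $\Sigma^{\deg(s)\rho_{2}}H\m B$, whose $\m\pi_{\star}$ is $\Sigma^{\deg(s)\rho_{2}}\m B_{\star}$. Each free orbit $\{f, g\cdot f\}$ with $f(e)\neq f(g)$ contributes a summand of the form $C_{2+}\wedge S^{\m{\deg}(f)}\wedge H\m B$, whose $\m\pi_{\star}$ is $\Sigma^{\m{\deg}(f)}\m B_{\star}^{C_{2}}$. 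These are precisely the two types of summands in the definition of a free $\m B_{\star}$-module, so summing over the two kinds of orbits produces the required isomorphism.

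There is no real obstacle here since the heavy lifting was done in Theorem~\ref{thm:TwistedKunneth}; the only subtle point is the bookkeeping of matching the wedge decomposition against the formal notion of free module, which amounts to computing $\m\pi_{\star}$ of an induced sphere spectrum smashed with $H\m B$ and recognizing it as $\m B_{\star}^{C_{2}}$, a standard computation using that $H\m B$-module spectra of the form $C_{2+}\wedge HM$ realize the induced Mackey functor.
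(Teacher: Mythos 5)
Your proposal is correct and is exactly the argument the paper intends: the corollary is stated as an immediate consequence of Theorem~\ref{thm:TwistedKunneth} with $R=\mathbb F_{2}$ (using that field-coefficient homology is free), and your orbit-by-orbit bookkeeping matching constant functions to $\Sigma^{\deg(s)\rho_{2}}\m{B}_{\star}$ summands and free orbits to $\Sigma^{\m{\deg}(f)}\m{B}_{\star}^{C_{2}}$ summands is precisely the decomposition recorded in the remark following that theorem's proof.
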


This can also be identified with a purely algebraic functor: the norm in $RO(C_{2})$-graded Mackey functors. This is the obvious extension of the Mazur-Hoyer norm, taking into consideration the canonical isomorphism
\[
N_{e}^{C_{2}} \Sigma^{k}M\cong \Sigma^{k\rho} N_{e}^{C_{2}}M.
\]
This, plus the distributive law, immediately gives the following.

\begin{corollary}\label{cor:HomologyofCoinduced}
If $X$ is any space, then the Bredon homology of $\Map_e(C_2,X)$ with coefficients in $\m{B}$ is isomorphic to
\[
N_{e}^{C_2} H_\ast (i_{e}^{\ast}X;\mathbb F_2).
\]
\end{corollary}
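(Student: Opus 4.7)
The plan is to reduce this to Theorem~\ref{thm:TwistedKunneth} together with a purely algebraic identification of the answer with the Mazur--Hoyer norm, using the fact that mod $2$ coefficients are a field and hence the homology of $i_e^\ast X$ is automatically free.

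First, choose a homogeneous $\mathbb F_2$-basis $S$ for $H_\ast(i_e^\ast X;\mathbb F_2)$, regarded as a graded set via the degree map. By Theorem~\ref{thm:TwistedKunneth} applied with $R=\mathbb F_2$, and using that $N_e^{C_2}\mathbb F_2=\m{B}$, the Bredon homology $\m{H}_\star(\Map_e(C_2,X);\m{B})$ is a free $\mB_\star$-module on the $RO(C_2)$-graded $C_2$-set $\Map_e(C_2,S)$. Regrouping the wedge summands by $C_2$-orbits as in the remark following the proof of Theorem~\ref{thm:TwistedKunneth}, this becomes
\[
\m{H}_\star\bigl(\Map_e(C_2,X);\m{B}\bigr)\cong\bigoplus_{[f]\in\Map_e(C_2,S)/C_2}\Sigma^{\m{\deg}(f)}\mB_\star^{C_2/\Stab(f)},
\]
where on a free orbit the stabilizer is trivial (producing an induced summand $\mB_\star^{C_2}$ in degree $\deg(f(e))+\deg(f(g))$) and on a diagonal orbit the stabilizer is $C_2$ (producing $\mB_\star$ in degree $\deg(s)\rho_2$).

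Next, compute $N_e^{C_2}H_\ast(i_e^\ast X;\mathbb F_2)$ on the algebraic side. The Mazur--Hoyer norm is the left Kan extension of the norm from free Mackey functors, and hence satisfies the distributive law for direct sums: if $M=\bigoplus_{s\in S}\Sigma^{\deg(s)}\mathbb F_2$ (regarded as a Mackey functor supported at $C_2/e$), then
\[
N_e^{C_2}M\cong \bigoplus_{[f]\in\Map_e(C_2,S)/C_2} N_e^{C_2}\bigl(\text{summand}\bigr),
\]
where each free orbit $\{f,g\cdot f\}$ contributes $N_e^{C_2}(\Sigma^{\deg(f(e))}\mathbb F_2\oplus\Sigma^{\deg(f(g))}\mathbb F_2)\cong \Sigma^{\deg(f(e))+\deg(f(g))}\m{B}^{C_2}$ (the cross-term $\mathbb F_2\otimes \mathbb F_2$ landing in the free $\m{B}$-module of rank one induced from $e$), and each diagonal orbit contributes $N_e^{C_2}\Sigma^{\deg(s)}\mathbb F_2\cong \Sigma^{\deg(s)\rho}\m{B}$ via the canonical isomorphism $N_e^{C_2}\Sigma^k(-)\cong \Sigma^{k\rho}N_e^{C_2}(-)$ noted in the text.

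Comparing the two displays summand by summand along the common indexing $C_2$-set $\Map_e(C_2,S)$ establishes the desired isomorphism. The main obstacle is the bookkeeping of the distributive law: one has to check that the $RO(C_2)$-gradings match in both cases, which amounts to verifying that the integer-graded ``free-orbit'' degree $\deg(f(e))+\deg(f(g))$ and the $\rho$-graded ``fixed-orbit'' degree $\deg(s)\rho$ produced by Theorem~\ref{thm:TwistedKunneth} are exactly those produced by the algebraic norm of a direct sum of suspensions of $\mathbb F_2$. Everything else is formal once the graded set $S$ is chosen.
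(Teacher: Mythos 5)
Your proposal is correct and follows the same route as the paper: Theorem~\ref{thm:TwistedKunneth} gives the free $\mB_{\star}$-module decomposition indexed by $\Map_e(C_2,S)$, and the identification with $N_e^{C_2}H_\ast(i_e^\ast X;\F_2)$ is exactly the paper's appeal to the distributive law for the Mazur--Hoyer norm together with the suspension isomorphism $N_e^{C_2}\Sigma^k M\cong\Sigma^{k\rho}N_e^{C_2}M$. You have simply spelled out the orbit-by-orbit bookkeeping that the paper leaves implicit.
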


More generally, we get a freeness result for any module over $\m{B}$. Since the Bredon homology with coefficients in $\m{B}$ is a free $\m{RO}(C_{2})$-graded module, it is flat over $\m{B}_{\star}$. In particular, the universal coefficients spectral sequence collapses.
\begin{corollary}
If $\mM$ is any $\m{B}$-module, then 
\[
\m{H}_{\star}\big(\Map_{e}(C_{2},X);\mM\big)\cong N_{e}^{C_{2}} \big(H_{\ast}(X;\F_{2})\big)\Box_{\m{B}} \mM,
\]
which splits as an $\m{RO}(C_{2})$-graded sum of copies of $\mM$.
\end{corollary}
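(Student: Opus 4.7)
The plan is to combine Corollary~\ref{cor:HomologyofCoinduced} with a flatness/universal-coefficients argument in the $RO(C_{2})$-graded setting. Concretely, Theorem~\ref{thm:TwistedKunneth} and Corollary~\ref{cor:HomologyofCoinduced} together identify the Bredon homology $\m{H}_{\star}(\Map_{e}(C_{2},X);\m{B})$ as a \emph{free} $\m{B}_{\star}$-module, explicitly a sum of shifts of $\m{B}_{\star}$ and $\m{B}_{\star}^{C_{2}}$ indexed by the graded set $\Map_{e}(C_{2},S)$ modulo the $C_{2}$-action. Freeness implies flatness over $\m{B}_{\star}$, which is the key input.

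First, I would invoke the Lewis--Mandell universal coefficients spectral sequence
\[
E_{2}^{*,*}=\Tor_{*,*}^{\m{B}_{\star}}\bigl(\m{H}_{\star}(\Map_{e}(C_{2},X);\m{B}),\mM\bigr)\Rightarrow \m{H}_{\star}(\Map_{e}(C_{2},X);\mM),
\]
which is constructed for arbitrary module spectra over an $RO(C_{2})$-graded Green functor. Because the left factor is free over $\m{B}_{\star}$, all higher $\Tor$-groups vanish and the spectral sequence collapses onto the $s=0$ line, yielding
\[
\m{H}_{\star}(\Map_{e}(C_{2},X);\mM)\cong \m{H}_{\star}(\Map_{e}(C_{2},X);\m{B})\Box_{\m{B}_{\star}}\mM.
\]
Substituting the description from Corollary~\ref{cor:HomologyofCoinduced} gives the asserted formula $N_{e}^{C_{2}}(H_{\ast}(X;\F_{2}))\Box_{\m{B}}\mM$.

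For the splitting statement, I would carry the explicit free decomposition through the box product. Writing
\[
\m{H}_{\star}(\Map_{e}(C_{2},X);\m{B})\cong \Bigl(\bigoplus_{s\in S_{*}}\Sigma^{\alpha_{s}}\m{B}_{\star}\Bigr)\oplus\Bigl(\bigoplus_{t\in S_{C_{2}}}\Sigma^{\beta_{t}}\m{B}_{\star}^{C_{2}}\Bigr),
\]
the trivial summands box with $\mM$ to give shifts of $\mM$, and the $C_{2}$-induced summands box with $\mM$ to give shifts of $\mM^{C_{2}}=\mM\Box_{\m{B}_{\star}}\m{B}_{\star}^{C_{2}}$ by Proposition~\ref{prop:ProductsofInduceds}. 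This exhibits the answer as an $RO(C_{2})$-graded sum of (shifts and $C_{2}$-induced copies of) $\mM$.

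The only real obstacle is the collapse of the universal coefficients spectral sequence, but this reduces immediately to flatness of $\m{H}_{\star}(\Map_{e}(C_{2},X);\m{B})$ over $\m{B}_{\star}$, which is already in hand from Theorem~\ref{thm:TwistedKunneth}. All other steps are formal manipulations with the box product and the shifts $\m{R}_{\star}^{T}$.
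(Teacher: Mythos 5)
Your proposal matches the paper's own argument: the paper derives this corollary precisely by observing that $\m{H}_{\star}(\Map_{e}(C_{2},X);\m{B})$ is a free, hence flat, $\m{B}_{\star}$-module, so the Lewis--Mandell universal coefficients spectral sequence collapses onto the $\Tor_{0}$ line. Your extra care in noting that the induced summands contribute shifts of $\mM^{C_{2}}$ (rather than literal copies of $\mM$) is a correct and slightly more precise reading of the splitting claim.
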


In particular, this implies a fairly strong form of a K\"unneth isomorphism.

\begin{proposition}\label{prop:HomologyCoindMod}
If $X$ is a space and $Y$ is a $C_{2}$-space, then we have a natural isomorphism
\[
\m{H}_{\star}\big(\Map_{e}(C_{2},X)\times Y;\m{B}\big)\cong \m{H}_{\star}\big(\Map_{e}(C_{2},X);\m{B}\big)\Box_{\m{B}_{\star}}\m{H}_{\star}(Y;\m{B}).
\]
\end{proposition}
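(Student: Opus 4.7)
The plan is to reduce the statement to the collapse of an equivariant K\"unneth spectral sequence, using the freeness of $\m{H}_{\star}(\Map_{e}(C_{2},X);\m{B})$ established in the preceding results. The relevant tool is the $RO(C_{2})$-graded K\"unneth spectral sequence of Lewis--Mandell, which for $H\m{B}$-modules takes the form
\[
E^{2}_{\ast,\star}=\Tor_{\ast,\star}^{\m{B}_{\star}}\big(\m{H}_{\star}(A;\m{B}),\m{H}_{\star}(B;\m{B})\big)\Rightarrow \m{H}_{\star}(A\times B;\m{B}).
\]
Specializing to $A=\Map_{e}(C_{2},X)$ and $B=Y$ gives precisely the spectral sequence we wish to use.

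First, I would invoke Corollary~\ref{cor:HomologyofCoinduced}, which identifies $\m{H}_{\star}(\Map_{e}(C_{2},X);\m{B})$ with $N_{e}^{C_{2}} H_{\ast}(i_{e}^{\ast}X;\mathbb F_{2})$ and, combined with the freeness result that immediately precedes this proposition, exhibits it as a free $\m{B}_{\star}$-module on the $RO(C_{2})$-graded set $\Map_{e}(C_{2},S)$, where $S$ is a graded $\F_{2}$-basis for $H_{\ast}(i_{e}^{\ast}X;\F_{2})$. Free $\m{B}_{\star}$-modules, in the sense of the definition given above (sums of suspensions of $\m{B}_{\star}$ and of $\m{B}_{\star}^{C_{2}}$), are flat for the box product, as can be checked one summand at a time using Proposition~\ref{prop:ProductsofInduceds} to handle the $\m{B}_{\star}^{C_{2}}$ factors.

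Flatness of the first input forces the higher $\Tor$-terms in the K\"unneth spectral sequence to vanish, so the spectral sequence degenerates at $E^{2}$ and collapses to an edge isomorphism
\[
\m{H}_{\star}(\Map_{e}(C_{2},X)\times Y;\m{B})\cong \m{H}_{\star}(\Map_{e}(C_{2},X);\m{B})\Box_{\m{B}_{\star}}\m{H}_{\star}(Y;\m{B}),
\]
which is the stated isomorphism. Naturality is automatic, since the spectral sequence and its edge map are natural in both factors.

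The main obstacle I anticipate is purely bookkeeping: one must verify that the shifted induced modules $\Sigma^{\beta}\m{B}_{\star}^{C_{2}}$ really are flat for $\Box_{\m{B}_{\star}}$. This is not quite immediate from the freeness of $\m{B}_{\star}$ itself, but follows from Proposition~\ref{prop:ProductsofInduceds} together with the observation that boxing with $\m{B}_{\star}^{C_{2}}$ can be computed geometrically as smashing with $(C_{2})_{+}$ at the level of $H\m{B}$-modules, an operation which manifestly preserves exact triangles and hence is exact. Once this flatness is in hand, the collapse argument is formal, and the proposition follows.
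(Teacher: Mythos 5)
Your proposal is correct and is essentially the paper's own argument: the paper's proof is the one-line observation that the Lewis--Mandell $RO(C_{2})$-graded K\"unneth spectral sequence collapses because $\m{H}_{\star}\big(\Map_{e}(C_{2},X);\m{B}\big)$ is a free $\m{B}_{\star}$-module. Your additional care in checking flatness of the induced summands $\Sigma^{\beta}\m{B}_{\star}^{C_{2}}$ via Proposition~\ref{prop:ProductsofInduceds} is a reasonable elaboration of a step the paper leaves implicit.
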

\begin{proof}
The Lewis-Mandell $RO(C_{2})$-graded K\"unneth spectral sequence collapses, since $\m{H}_{\star}\big(\Map_{e}(C_{2},X);\m{B}\big)$ is a free module.
\end{proof}

Finally, there is a pointed version of all of these results; the proofs are essentially unchanged.

\begin{proposition}\label{prop:RedHomologyNorm}
If $X$ is a pointed space, then 
\[
\m{\tilde{H}}_{\star}\big(N_{e}^{C_{2}}X;\m{B}\big)\cong N_{e}^{C_{2}} \big(\tilde{H}_{\ast}(X;\mathbb F_{2})\big).
\]
\end{proposition}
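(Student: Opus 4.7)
The plan is to mimic the proof of Theorem~\ref{thm:TwistedKunneth}, substituting reduced suspension spectra for $\Sigma^{\infty}_{+}$ and smash products for Cartesian products throughout. I want to pass from a free resolution of $\tilde H_{\ast}(X;\F_{2})$, norm it, and then base change along the zeroth Postnikov section to $H\m{B}$.

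Concretely, I first choose a graded set $S$ of basis elements for the $\F_{2}$-vector space $\tilde H_{\ast}(X;\F_{2})$. This yields a weak equivalence of $H\F_{2}$-module spectra
\[
\bigvee_{s\in S} S^{\deg(s)}\wedge H\F_{2}\xrightarrow{\simeq} \Sigma^{\infty} X\wedge H\F_{2},
\]
where now $\Sigma^{\infty}$ denotes the reduced suspension spectrum. Applying the norm $N_{e}^{C_{2}}$ to both sides produces an equivalence of $N_{e}^{C_{2}}H\F_{2}$-module spectra. Strong symmetric monoidality of $\Sigma^{\infty}$ identifies the right-hand side with $\Sigma^{\infty}N_{e}^{C_{2}}X\wedge N_{e}^{C_{2}}H\F_{2}$, while the distributive law for the norm on a wedge (exactly as invoked in the proof of Theorem~\ref{thm:TwistedKunneth}) gives
\[
N_{e}^{C_{2}}\left(\bigvee_{s\in S} S^{\deg(s)}\right)\simeq \bigvee_{f\in\Map_{e}(C_{2},S)/C_{2}} C_{2+}\wedge_{\Stab(f)} S^{\m{\deg}(f)},
\]
where the diagonal orbits contribute $S^{\deg(s)\rho}=N_{e}^{C_{2}}S^{\deg(s)}$ and the free orbits contribute $C_{2+}\wedge S^{\deg(s)+\deg(t)}$, matching the definition of $\m{\deg}(f)$.

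Finally, I would base change along the zeroth Postnikov map $N_{e}^{C_{2}}H\F_{2}\to H\m{B}$ (using Ullman's commutativity and $\m{\pi}_{0}N_{e}^{C_{2}}H\F_{2}\cong N_{e}^{C_{2}}\F_{2}=\m{B}$) and take $RO(C_{2})$-graded homotopy. The algebraic identification with $N_{e}^{C_{2}}(\tilde H_{\ast}(X;\F_{2}))$ then follows from the compatibility $N_{e}^{C_{2}}\Sigma^{k}M\cong \Sigma^{k\rho}N_{e}^{C_{2}}M$ and the distributive law for the Mazur-Hoyer norm on Mackey functors.

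The only point demanding any care---and the reason the author remarks that the proofs are ``essentially unchanged''---is the bookkeeping for the distributive law in the reduced setting: one must verify that the basepoint of $X$ contributes no extra diagonal piece. This is automatic because $\Sigma^{\infty}$ of a wedge is a wedge of $\Sigma^{\infty}$'s and the basepoint produces only the trivial sphere on each side, so no spurious classes of the form $N_{e}^{C_{2}}S^{0}$ enter. No new ingredients beyond those in the proof of Theorem~\ref{thm:TwistedKunneth} are required.
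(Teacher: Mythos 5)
Your proposal is correct and is exactly the argument the paper intends: the paper gives no separate proof for the pointed case, stating only that "the proofs are essentially unchanged" from Theorem~\ref{thm:TwistedKunneth}, and your write-up is precisely that proof with $\Sigma^{\infty}_{+}$ replaced by the reduced $\Sigma^{\infty}$, $\Map_{e}(C_{2},-)$ replaced by the smash-power norm $N_{e}^{C_{2}}$, and the same use of the distributive law and base change along $N_{e}^{C_{2}}H\F_{2}\to H\m{B}$. Your closing remark about the basepoint contributing no extra diagonal summand is the right (and only) point of care.
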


%
%The $RO(C_{2})$-graded Mackey functor $\mB_{C_{2}\star}$ is very simple. 
%For any virtual representation $V$, we have
%\[
%\m{\pi}_{V} \big(C_{2+}\wedge H\mB\big)\cong \m{\pi}_{0} \big(C_{2+}\wedge S^{-V}\wedge H\mB\big)\cong \m{\pi}_{0} \big(C_{2+}\wedge_{e} (S^{- i_{e}^{\ast} V}\wedge i_{e}^{\ast}H\mB)\big).
%\]
%If $\Ind_{e}^{C_{2}}$ denotes the left adjoint to the forgetful functor in Mackey functors, then the Wirthm\"uller isomorphism shows that we have
%\[
%\m{\pi}_{0} \big(C_{2+}\wedge_{e} (S^{- i_{e}^{\ast} V}\wedge i_{e}^{\ast}H\mB)\big)\cong \Ind_{e}^{C_{2}} \pi_{0}\big(i_{e}^{\ast}(S^{-V}\wedge H\mB)\big).
%\]
%In particular, these are zero unless $\dim V=0$, in which case they are $\mB(C_{2}/e)$.
%
%Let
%\[
%\mB_{C_{2}}=\Ind_{e}^{C_{2}}\res \mB.
%\]
%\begin{proposition}
%As $\m{B}_{\star}$-modules, we have an isomorphism
%\[
%\mB_{C_{2}\star}\cong \mB_{\star}\Box_{\mB}\mB_{C_{2}}.
%\]
%\end{proposition}
%\begin{proof}
%The $\mB$-module $\mB_{C_{2}}$ is projective, and hence is flat. This implies that the algebraic universal coefficients spectral sequence 
%\[
%\Tor^{\m{B}}(\mB_{\star},\mB_{C_{2}})\Rightarrow \mB_{C_{2}\star}
%\]
%vanishes above $Tor_{0}$.
%\end{proof}
%
%\begin{corollary}
%
%\end{corollary}

\subsection{Cohomology of $\ccD(\sigma)$-spaces}
We can now combine the structural results from \S~\ref{sec:SignedLoopsTopology} to determine the structure present in homology of a signed loop space. We begin with a small definition.

\begin{definition}
Let $R$ be an associative ring. A {\defemph{pointed}} $R$-module is an $R$-module $M$ together with an element $m\in M$.
\end{definition}
By the free-forget adjunction, this is of course the same data as an $R$-module homomorphism $R\to M$.

\begin{theorem}\label{thm:HstarStructure}
Let $X$ is an algebra over $\ccD(\sigma)$. Let 
\[
R_{\ast}=H_\ast(i_e^{\ast} X;\mathbb F_2)
\]
and let 
\[
\m{R}_{\star}=N_e^{C_2} R_{\ast}\cong \m{H}_{\star}\big(\Map(C_{2},X);\mB\big).
\]
Then
\begin{enumerate}
\item $R_\ast$ is a graded, associative ring equipped with an isomorphism 
\[
\overline{(\mhyphen)}\colon R_\ast\to R_\ast^{op}
\] 
\item $\m{H}_\star(X;\mB)$ is naturally a pointed $\m{R}_{\star}$-module, and the restriction of distinguished point is the multiplicative unit in $R_\ast$.
\end{enumerate}
\end{theorem}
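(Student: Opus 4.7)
The plan is to build both parts from structural facts already in hand, so most of the work is bookkeeping rather than new geometry.

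For part (1), I would simply combine Proposition~\ref{prop:UnderlyingDsigma} and Proposition~\ref{prop:AntiAut}. The first shows that $i_e^\ast X$ is an $E_1$-space, so the Pontryagin product on $R_\ast = H_\ast(i_e^\ast X;\mathbb F_2)$ endows it with the structure of a graded associative $\mathbb F_2$-algebra. The second says the $C_2$-action on $X$ restricts to an anti-automorphism of $E_1$-algebras on $i_e^\ast X$; applying $H_\ast(-;\mathbb F_2)$ gives the promised isomorphism $\overline{(\mhyphen)}\colon R_\ast\to R_\ast^{op}$.

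For part (2), the action map I would use is the operadic bimodule structure map
\[
\mu_{C_2}\colon \Map(C_2,X)\times X\to X
\]
constructed in the last proposition of Section~2, which is associative and unital up to the coherence guaranteed by the $\ccD(\sigma)$-algebra structure. Applying $\m{H}_\star(\mhyphen;\m{B})$ and using the Künneth isomorphism of Proposition~\ref{prop:HomologyCoindMod} (which applies because the factor $\Map(C_2,X)=\Map_e(C_2, i_e^\ast X)$ has free Bredon homology by Corollary~\ref{cor:HomologyofCoinduced}), we get a natural map
\[
\m{R}_\star\Box_{\m{B}_\star}\m{H}_\star(X;\m{B})\cong \m{H}_\star\bigl(\Map(C_2,X)\times X;\m{B}\bigr)\longrightarrow \m{H}_\star(X;\m{B}).
\]
Associativity of this pairing follows from the topological associativity of $\mu_{C_2}$ (together with naturality of Künneth for the triple product $\Map(C_2,X)\times\Map(C_2,X)\times X$, where the first two factors again have free Bredon homology so Künneth continues to apply). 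Unitality follows from the analogous topological identity, using that the unit $\ast\to\Map(C_2,X)$ comes from the fixed basepoint of Proposition~\ref{prop:Pointed}. The pointed structure on $\m{H}_\star(X;\m{B})$ is the class of this basepoint; by construction the restriction $i_e^\ast$ sends it to the unit class of the $E_1$-multiplication on $R_\ast$, which is the multiplicative identity.

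The main obstacle is making sure that the Künneth-based definition of the module structure genuinely encodes the algebraic module axioms and not just a pairing. Concretely, I would need to check that the iterated Künneth decomposition of $\m{H}_\star\bigl(\Map(C_2,X)^{\times 2}\times X;\m{B}\bigr)$ is compatible with both the operad-level associativity of $\mu_{C_2}$ (using the multiplication on $\Map(C_2,X)$ from Theorem~\ref{thm:DiskAlgebras}) and with the box product of the maps on homology; this is essentially the assertion that the Lewis–Mandell Künneth spectral sequence degeneration is symmetric monoidally natural, which is standard once one factor has a free Bredon homology. Everything else is a direct translation of the corresponding topological fact (coming from $\ccD(\sigma)$) into $RO(C_2)$-graded algebra.
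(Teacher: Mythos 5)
Your argument is correct and is precisely the one the paper intends (the paper in fact gives no explicit proof, saying only that one combines the structural results of \S2.2 with the homology of coinduction): part (1) follows from Proposition~\ref{prop:UnderlyingDsigma} and Proposition~\ref{prop:AntiAut} exactly as you say, and part (2) from the operadic action map $\mu_{C_2}$ together with the collapsing K\"unneth isomorphism for the coinduced factor. The only point worth flagging is that $\pi_{0}\mathring{\sigma}^{e}$ has two components, so $\mu_{C_2}$ is one of two homotopy classes of action maps (differing by the twist $\gamma$), either of which yields the asserted structure; note also that your basepoint pointing agrees, under the free--forget correspondence, with the norm map $\Map(C_2,X)\to X$ invoked in the remark following the theorem.
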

%\begin{proof}
%The first part follows from $i_{e}^{\ast}X$ being an $E_{1}$-algebra, and the anti-automorphism is Proposition~\ref{prop:AntiAut}.
%
%Corollary~\ref{cor:HomologyofCoinduced} identifies the homology of $\Map(C_{2},X)$. 
%\end{proof}
%
%Here the twist is given by the composite
%\[
%i_e^\ast X\times i_e^\ast X\xrightarrow{1\times g} i_e^\ast X\times i_e^\ast X\xrightarrow{\mu} i_e^\ast X.
%\]

\begin{remark}
The pointing in Theorem~\ref{thm:HstarStructure} gives us a kind of norm map on the homology. This has been used most recently by Behrens-Wilson in their construction of $H\m{\F}_2$ as a Thom spectrum \cite{BehWil}.
\end{remark}

\subsection{Aside: Even more structure}
Just as the homology of any space with coefficients in a field is a co-commutative co-algebra, the homology of a $C_{2}$-space with coefficients in $\mB$ has kinds of comultiplications. These arise from the two [twisted] diagonals:
\[
X\xrightarrow{\Delta} X\times X\text{ and } X\xrightarrow{\Delta_{g}} \Map(C_{2},X).
\]
Applying homology with coefficients in $\mB$ then gives structure maps.

\begin{proposition}\label{prop:CoNorm}
If $X$ is any $C_{2}$-space, then the homology of $X$ with coefficients in $\mB$ has a natural co-norm map
\[
\m{H}_{\star}(X;\mB)\to N_{e}^{C_{2}} H_{\ast}(i_{e}^{\ast}X;\F_{2}).
\]
\end{proposition}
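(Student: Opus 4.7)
The plan is to read off the co-norm from the twisted diagonal together with the calculation of the homology of a coinduced space established in Corollary~\ref{cor:HomologyofCoinduced}. Since the proposition only claims the existence of a natural map (not, say, that it is a map of coalgebras), essentially all the work has already been done.

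First I would write down the twisted diagonal
\[
\Delta_{g}\colon X \to \Map_{e}(C_{2},X)
\]
as the $C_{2}$-equivariant map adjoint under the restriction-coinduction adjunction to the identity of $i_{e}^{\ast}X$. Concretely, $\Delta_{g}(x)(h) = h\cdot x$, and one checks directly that $\Delta_{g}(k\cdot x)(h) = hk\cdot x = (k\cdot\Delta_{g}(x))(h)$, so $\Delta_{g}$ is $C_{2}$-equivariant.

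Next I would apply the $RO(C_{2})$-graded Bredon homology functor $\m{H}_{\star}(-;\mB)$ to $\Delta_{g}$, producing a natural map
\[
(\Delta_{g})_{\ast}\colon \m{H}_{\star}(X;\mB)\to \m{H}_{\star}\bigl(\Map_{e}(C_{2},X);\mB\bigr).
\]
The target is identified by Corollary~\ref{cor:HomologyofCoinduced}, which gives a natural isomorphism
\[
\m{H}_{\star}\bigl(\Map_{e}(C_{2},X);\mB\bigr)\cong N_{e}^{C_{2}} H_{\ast}(i_{e}^{\ast}X;\F_{2}).
\]
Composing $(\Delta_{g})_{\ast}$ with this isomorphism yields the desired co-norm map. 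Naturality in $X$ follows from naturality of the twisted diagonal with respect to $C_{2}$-equivariant maps and the already-noted naturality of Corollary~\ref{cor:HomologyofCoinduced}.

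There is no real obstacle here; the potentially delicate content, namely identifying the homology of $\Map_{e}(C_{2},X)$ with the Mazur-Hoyer norm of $H_{\ast}(i_{e}^{\ast}X;\F_{2})$, is exactly what was packaged in the previous subsection. Once that identification is in hand, the construction of the co-norm is forced by the universal property of coinduction: the unit of the adjunction, applied in Bredon homology, \emph{is} the co-norm.
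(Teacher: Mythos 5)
Your proposal is correct and follows exactly the paper's (one-line) argument: apply $\m{H}_{\star}(-;\mB)$ to the twisted diagonal $\Delta_{g}\colon X\to\Map_{e}(C_{2},X)$ and identify the target via Corollary~\ref{cor:HomologyofCoinduced}. Your added checks of the equivariance of $\Delta_{g}$ and of naturality are fine and just make explicit what the paper leaves implicit.
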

\begin{proof}
Corollary~\ref{cor:HomologyofCoinduced} identifies the homology of the target of the map $\m{H}_{\star}\Delta_{g}$.
\end{proof}

In the case we have flatness, then we can deduce also a comultiplication.

\begin{proposition}
If $X$ is a $C_{2}$-space such that $\m{H}_{\star}(X;\mB)$ is a flat $\mB_{\star}$-module, then we have a natural comultiplication
\[
\m{H}_{\star}(X;\mB)\to \m{H}_{\star}(X;\mB)\Box_{\mB_{\star}} \m{H}_{\star}(X;\mB).
\]
In this case, $\mH_{\star}(X;\mB)$ is naturally a co-Tambara functor.
\end{proposition}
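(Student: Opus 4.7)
The plan is to obtain the comultiplication as the composite
\[
\mH_{\star}(X;\mB)\xrightarrow{\mH_{\star}\Delta} \mH_{\star}(X\times X;\mB)\xrightarrow{\cong} \mH_{\star}(X;\mB)\Box_{\mB_{\star}}\mH_{\star}(X;\mB),
\]
where $\Delta\colon X\to X\times X$ is the ordinary diagonal and the second map is the inverse of the Lewis--Mandell $RO(C_{2})$-graded K\"unneth map. First I would verify that the K\"unneth map is an isomorphism under the flatness hypothesis: this is precisely the collapse of the Lewis--Mandell K\"unneth spectral sequence at $E_{2}$, exactly as in the proof of Proposition~\ref{prop:HomologyCoindMod}. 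Cocommutativity and coassociativity then follow from the corresponding properties of $\Delta$ together with the symmetric-monoidal naturality of the K\"unneth isomorphism, and the counit is induced by the collapse $X\to \ast$.

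For the co-Tambara structure I would package three pieces of data: the comultiplication just constructed, the co-norm of Proposition~\ref{prop:CoNorm}, and the counit. The co-Tambara axioms are the dualization of the Tambara axioms, which encode the compatibility of additive transfers, products, and norm maps via the distributive law. At the space level these reduce to strict identities among $\Delta$, the twisted diagonal $\Delta_{g}\colon X\to \Map(C_{2},X)$, and the evaluation and projection maps on $\Map(C_{2},X)$ and $X\times X$. Applying $\mH_{\star}(-;\mB)$ and using the flatness hypothesis (together with Proposition~\ref{prop:HomologyCoindMod}) to identify the homology of each iterated product with the corresponding $\Box_{\mB_{\star}}$-product turns these space-level identities into the required commutative diagrams among comultiplication, co-norm, and counit.

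The main obstacle will be the distributive-law coherence. On the co-side this asks that the co-norm and the comultiplication interact through the natural isomorphism $N_{e}^{C_{2}}(A\otimes A)\cong N_{e}^{C_{2}}A\Box_{\mB_{\star}}N_{e}^{C_{2}}A$ supplied by the symmetric monoidality of the Hoyer--Mazur norm combined once again with flatness. The space-level verification reduces to the tautology
\[
\Map(C_{2},\Delta)\circ\Delta_{g}=(\Delta_{g}\times\Delta_{g})\circ\Delta
\]
under the canonical shuffle identification $\Map(C_{2},X\times X)\cong\Map(C_{2},X)\times\Map(C_{2},X)$; once this is in hand, naturality of the K\"unneth isomorphism and of the identification in Corollary~\ref{cor:HomologyofCoinduced} propagates it to the required homology-level diagram, completing the co-Tambara structure.
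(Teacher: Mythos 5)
Your proposal matches the paper's intended argument: the paper states this proposition without a written proof, but the surrounding discussion makes clear that the comultiplication is $\mH_{\star}(\Delta)$ followed by the inverse of the Lewis--Mandell K\"unneth isomorphism, which exists because flatness collapses the K\"unneth spectral sequence --- exactly the composite you construct. Your outline of the co-Tambara coherences via space-level identities among $\Delta$, $\Delta_{g}$, and the projections is consistent with (and more detailed than) the paper's one-line justification that all of these structure maps are induced by natural maps of spaces.
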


Since these are induced by natural maps of spaces, we deduce that all of the structure maps described before are actually maps of co-Tambara functors.

\section{The signed James construction}
\subsection{Construction and interpretation}
Just as classically there is a combinatorial model for the loop space of the suspension of a space, there is an elegant combinatorial model due to Rybicki for the signed loops on the signed suspension of a $C_2$-space.

\begin{definition}[{\cite[Section 2]{RybickiJames}}]
If $X$ is a pointed $C_2$ space, then let 
\[
J^{\sigma}_n(X):=\coprod_{k= 0}^{n} X^{\times k} / \sim,
\]
where $C_2$ acts on $X^{\times k}$ via
\[
(x_1,\dots,x_k)\mapsto (\bar{x}_k,\dots,\bar{x}_1),
\]
and where $\sim$ is the equivalence relation which simply omits any coordinate which is the basepoint.

Let $J^{\sigma}(X)$ be the colimit of $J^{\sigma}_n(X)$ as $n$ varies.
\end{definition}

The intertwining of the $C_2$-action on the space and on the Cartesian factors is what makes this definition viable. 
For us, we will need an inductive pushout description of the finite $J^{\sigma}_k(X)$. 

%\begin{definition}
%Let $X$ be a $C_2$-space and let $T$ be a finite $C_2$-set. The {\defemph{$T$-shaped fat wedge}} is the subspace 
%\[
%F_T(X)=\{f | \exists t\in T s.t. f(t)=\ast\}\subset \Map(T,X),
%\]
%where $\ast\in X$ is the basepoint.
%\end{definition}
%
%There is the usual connection between the Cartesian product and smash products, even in the equivariant setting, and the $T$-shaped fat wedge plays the expected role.

\begin{lemma}\label{lem:PushoutSquares}
Let $X$ be a pointed $C_2$-space. For a finite $C_2$-set $T$, let $F_T(X)$ denote the ``fat wedge'' for $\Map(T,X)$, that is, the collection of points in $\Map(T,X)$ for which one of the coordinates is the basepoint. 

We have pushout squares of $C_2$-equivariant spaces:
\begin{center}
\begin{tikzcd}
F_{nC_2}(X) 
	\ar[r]
	\ar[d]
	&
\Map(C_2,X)^n
	\ar[d]
	\\
J_{2n-1}^{\sigma}(X)
	\ar[r]
	&
J_{2n}^{\sigma}(X)
\end{tikzcd} and
\begin{tikzcd}
F_{nC_2\amalg \ast}(X)
	\ar[d]
	\ar[r]
	&
\Map(C_2,X)^n\times X
	\ar[d]
	\\
J_{2n}^{\sigma}(X)
	\ar[r]
	&
J_{2n+1}^{\sigma}(X).
\end{tikzcd}
\end{center}
\end{lemma}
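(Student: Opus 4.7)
The plan is to reduce the lemma to the classical (non-equivariant) pushout description of the James filtration, once we set up the right $C_2$-equivariant identification of $X^{\times k}$ with the appropriate coinduced space. Identifying $\Map(C_2, X) \cong X \times X$ via $f \mapsto (f(e), f(g))$ sends the natural $C_2$-action to the ``swap-and-bar'' $(x, y) \mapsto (\bar y, \bar x)$. I would then build a shuffle homeomorphism
\[
\phi_n \colon X^{\times 2n} \to \Map(C_2, X)^n, \quad (a_1, \dots, a_{2n}) \mapsto \big((a_1, a_{2n}), (a_2, a_{2n-1}), \dots, (a_n, a_{n+1})\big),
\]
pairing the $i$-th coordinate with the $(2n+1-i)$-th, and verify by direct computation that the palindrome-with-bars action $(a_1,\dots,a_{2n})\mapsto(\bar a_{2n},\dots,\bar a_1)$ from the definition of $J^\sigma$ corresponds exactly to the componentwise swap-and-bar action on $\Map(C_2, X)^n$. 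An analogous map $\phi_n'$ handles the odd case, placing the fixed middle coordinate into the residual $X$ factor of $\Map(C_2,X)^n \times X$ with its standard $C_2$-action.

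Under these equivariant identifications, the ``fat wedge'' in $X^{\times k}$---i.e.\ tuples with at least one basepoint coordinate---corresponds precisely to $F_{nC_2}(X)$ or $F_{nC_2 \amalg \ast}(X)$, since the condition ``some coordinate is the basepoint'' is invariant under any coordinate permutation and in particular under the shuffle. What remains is the classical observation: $J^\sigma_k(X)$ is obtained from $J^\sigma_{k-1}(X)$ by attaching $X^{\times k}$ along its fat wedge, because a tuple with some basepoint coordinate is identified under $\sim$ with the shorter tuple obtained by omitting that coordinate (and hence lies in the image of $J^\sigma_{k-1}(X)$), while tuples with no basepoint coordinates inject into the complement $J^\sigma_k(X) \setminus J^\sigma_{k-1}(X)$. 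This immediately gives the square as a pushout in sets, and it upgrades to a pushout of spaces provided $X$ is well-pointed so the fat-wedge inclusion is a cofibration.

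Equivariance of the resulting pushout square is then automatic: every map in sight, including the shuffle, is $C_2$-equivariant by construction, and the relation $\sim$ is itself $C_2$-equivariant (erasing a basepoint coordinate commutes with reversing the tuple and barring). The main obstacle I anticipate is purely bookkeeping---keeping the shuffle, the bars, and the orientations consistent enough that each square is manifestly equivariant---rather than any conceptual difficulty; once the identification $X^{\times 2n}\cong \Map(C_2,X)^n$ is set up correctly, both squares are just the equivariant rewrapping of the standard James pushout.
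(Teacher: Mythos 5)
Your proof is correct, and it is exactly the argument the paper intends: the paper states this lemma without proof, and your shuffle identification $X^{\times 2n}\cong\Map(C_2,X)^n$ (pairing coordinate $i$ with coordinate $2n+1-i$) together with the classical fat-wedge pushout for the James filtration is the evident way to supply it. One small correction: the squares are strict pushouts of spaces directly from the quotient topology defining $J^\sigma_k(X)$, with no well-pointedness needed; well-pointedness (or the CW hypothesis) only enters later, when Theorem~\ref{thm:JamesSplitting} needs these squares to be homotopy pushouts.
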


\subsection{Splitting the signed James construction}
Classically, the James construction splits after a single suspension into a wedge of a suspension of smash powers of the original space. Here, the presence of two kinds of products requires two suspensions.

\begin{theorem}\label{thm:JamesSplitting}
For any pointed $C_2$-space of the homotopy type of a $C_2$-CW complex, we have a natural weak equivalence
\[
\Sigma^{\rho}_{+} J^{\sigma} (X)\simeq\bigvee_{k\geq 0} \Sigma^{\rho}(N_{e}^{C_2} i_{e}^{\ast}X^{\wedge k})\wedge (S^0\vee X).
\]
\end{theorem}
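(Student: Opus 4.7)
The plan is to exploit the filtration of $J^{\sigma}(X)$ from Lemma~\ref{lem:PushoutSquares} and show that after applying $\Sigma^{\rho}_{+}$ (with $\rho=1+\sigma$) the associated graded splits off as a wedge. First, I would identify the successive cofibers. From the pushout squares they are $\Map(C_{2},X)^{n}/F_{nC_{2}}(X)$ and $\bigl(\Map(C_{2},X)^{n}\times X\bigr)/F_{nC_{2}\amalg\ast}(X)$. Using $\Map(nC_{2},X)=\CoInd_{e}^{C_{2}}((i_{e}^{\ast}X)^{\times n})$ and noting that the coordinate permutation regrouping the $e$- and $g$-coordinates identifies the $C_{2}$-action that swaps within each pair with the $C_{2}$-action that swaps two blocks of $n$ coordinates, these cofibers become $N_{e}^{C_{2}}\bigl((i_{e}^{\ast}X)^{\wedge n}\bigr)$ and $N_{e}^{C_{2}}\bigl((i_{e}^{\ast}X)^{\wedge n}\bigr)\wedge X$ respectively, matching the right-hand side summands.

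The heart of the argument is splitting the top horizontal arrow $F_{nC_{2}}(X)\hookrightarrow\Map(nC_{2},X)$ of Lemma~\ref{lem:PushoutSquares} (and its $\amalg\ast$-variant) stably after $\Sigma^{\rho}_{+}$. Writing $\Sigma^{\rho}=\Sigma\,\Sigma^{\sigma}$ and invoking the appendix---which splits a coinduced space after a single signed suspension, in analogy with the classical stable splitting of a Cartesian product after one ordinary suspension---one obtains a $C_{2}$-equivariant fat-wedge decomposition of $\Sigma^{\rho}X^{\times 2n}$ indexed by $C_{2}$-orbits of nonempty subsets of $\{1,\ldots,2n\}$. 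The summand for the full $C_{2}$-fixed subset $\{1,\ldots,2n\}$ is precisely $\Sigma^{\rho}X^{\wedge 2n}\simeq\Sigma^{\rho}N_{e}^{C_{2}}\bigl((i_{e}^{\ast}X)^{\wedge n}\bigr)$, while the remaining orbits together contribute $\Sigma^{\rho}F_{nC_{2}}(X)$. Pushing out this split cofibration along the left vertical map $F_{nC_{2}}(X)\to J_{2n-1}^{\sigma}(X)$ of Lemma~\ref{lem:PushoutSquares} yields $\Sigma^{\rho}_{+}J_{2n}^{\sigma}(X)\simeq\Sigma^{\rho}_{+}J_{2n-1}^{\sigma}(X)\vee\Sigma^{\rho}N_{e}^{C_{2}}\bigl((i_{e}^{\ast}X)^{\wedge n}\bigr)$; the odd step follows analogously from the second pushout square.

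Induction on $n$ and passage to the sequential colimit give $\Sigma^{\rho}_{+}J^{\sigma}(X)\simeq\bigvee_{n\geq 0}\Sigma^{\rho}Q_{n}$ with $Q_{2k}=N_{e}^{C_{2}}\bigl((i_{e}^{\ast}X)^{\wedge k}\bigr)$ and $Q_{2k+1}=Q_{2k}\wedge X$ (so $Q_{0}=S^{0}$ contributes the $S^{\rho}$ coming from the added disjoint basepoint). Regrouping by parity and factoring out the common $\Sigma^{\rho}N_{e}^{C_{2}}\bigl((i_{e}^{\ast}X)^{\wedge k}\bigr)$ produces the stated $(S^{0}\vee X)$-smash form. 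The main obstacle is the equivariant fat-wedge splitting of $X^{\times 2n}$ with the block-swap $C_{2}$-action: the classical non-equivariant decomposition is not automatically $C_{2}$-equivariant, and providing a $C_{2}$-equivariant refinement is precisely the work that the extra $\Sigma^{\sigma}$ from the appendix does. A softer issue is arranging the retractions naturally across $n$ so they pass to the sequential colimit, which reduces to naturality of the appendix's splitting in maps of non-equivariant spaces.
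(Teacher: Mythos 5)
Your proposal follows essentially the same route as the paper: identify the successive cofibers of the James filtration via the pushout squares of Lemma~\ref{lem:PushoutSquares} as $N_{e}^{C_2}\bigl((i_{e}^{\ast}X)^{\wedge n}\bigr)$ and $N_{e}^{C_2}\bigl((i_{e}^{\ast}X)^{\wedge n}\bigr)\wedge X$, split the resulting cofiber sequences after $\Sigma^{\rho}$ using Theorem~\ref{thm:SplittingCoinduction}, and then induct and pass to the colimit. Your added discussion of the equivariant fat-wedge decomposition only elaborates a step the paper leaves implicit, so the argument is correct and matches the paper's.
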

\begin{proof}
This is immediate from Lemma~\ref{lem:PushoutSquares}, once we remember that we have equivariant homeomorphisms
\[
\Map(C_2,X^{\times k})/F_{kC_2}(X)\cong (N_{e}^{C_2} i_{e}^{\ast} X)^{\wedge k},
\]
and
\[
\big(\Map(C_2,X^{\times k})\times X\big)/F_{kC_2\amalg\ast}(X)\cong \big(N_e^{C_2}i_e^{\ast}X^{\wedge k}\big)\wedge X.
\]

Theorem~\ref{thm:SplittingCoinduction} below shows that upon a single sign or regular suspension, the top row in both pushout squares from Lemma~\ref{lem:PushoutSquares} splits. In particular we deduce two splittings
\[
\Sigma^{\rho} J_{2n}^{\sigma}(X)\simeq \Sigma^{\rho} J_{2n-1}^{\sigma}(X)\wedge N_{e}^{C_2}(X)^{\wedge n}
\]
and
\[
\Sigma^{\rho} J_{2n+1}^{\sigma}(X)\simeq\Sigma^{\rho} J_{2n}^{\sigma}(X)\wedge \big(N_{e}^{C_2}(X)^{\wedge n}\big)\wedge X.
\]
Splicing these together gives the desired result.
\end{proof}

\begin{corollary}
For any $k$ and for any $C_2$-space $X$, we have James-Hopf maps of the form
\[
h_{kC_2}\colon \Omega^{\sigma}\Sigma^{\sigma} X\to \Omega^{\rho}\Sigma^{\rho} N_{e}^{C_2} i_{e}^{\ast}X^{\wedge k}
\]
and
\[
h_{kC_2+1}\colon\Omega^{\sigma}\Sigma^{\sigma} X\to\Omega^{\rho}\Sigma^{\rho} (X\wedge N_{e}^{C_2} i_{e}^{\ast} X^{\wedge k}).
\]
\end{corollary}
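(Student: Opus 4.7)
The plan is to mimic the classical construction of the James--Hopf maps, with the circle replaced by the sign sphere and with the stable splitting provided by Theorem~\ref{thm:JamesSplitting}. Classically, one combines (a) the James equivalence $J(X) \simeq \Omega\Sigma X$ with (b) the stable splitting of $\Sigma J(X)$: projecting onto a wedge summand and taking the adjoint yields the James--Hopf map. Both ingredients have signed analogues in our setting.

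The first step is to invoke Rybicki's signed James theorem \cite{RybickiJames}, which provides a natural weak equivalence
\[
J^{\sigma}(X) \simeq \Omega^{\sigma}\Sigma^{\sigma} X
\]
(suitably interpreted via group completion when $X$ is not appropriately connected). Alternatively, one can obtain this as an instance of the equivariant recognition principle of Rourke--Sanderson and Hauschild mentioned earlier in the paper, applied to the free $\ccD(\sigma)$-algebra on $X$, together with an identification of $J^{\sigma}(X)$ with $\mathbb{P}_{\ccD(\sigma)}(X)$.

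The second step is to project onto the appropriate wedge summand of the splitting from Theorem~\ref{thm:JamesSplitting}. For each $k \geq 0$, the splitting supplies retractions
\[
\pi_{kC_{2}}\colon \Sigma^{\rho}_{+}J^{\sigma}(X) \to \Sigma^{\rho}\big(N_{e}^{C_{2}} i_{e}^{\ast}X^{\wedge k}\big)
\]
and
\[
\pi_{kC_{2}+1}\colon \Sigma^{\rho}_{+}J^{\sigma}(X) \to \Sigma^{\rho}\big(N_{e}^{C_{2}} i_{e}^{\ast}X^{\wedge k}\wedge X\big).
\]
The third step is to take the adjoints of these projections under the $(\Sigma^{\rho},\Omega^{\rho})$ adjunction, obtaining maps $J^{\sigma}(X)_{+} \to \Omega^{\rho}\Sigma^{\rho}(\cdots)$. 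Restricting along the inclusion $J^{\sigma}(X) \hookrightarrow J^{\sigma}(X)_{+}$ and pre-composing with the signed James equivalence yields the desired $h_{kC_{2}}$ and $h_{kC_{2}+1}$.

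The main obstacle is step one: the signed James theorem is a genuinely equivariant refinement of its classical counterpart, and some care is required regarding connectivity hypotheses and the passage to group completion. Everything after that is formal, reducing to projection in a wedge and use of the suspension-loop adjunction. In particular, once the James model is in hand, the entire construction is natural in $X$ and functorial in the splitting, so the James--Hopf maps assemble into well-defined natural transformations as stated.
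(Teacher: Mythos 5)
Your proposal is correct and matches the paper's argument exactly: the paper likewise composes the (Rybicki/recognition-principle) equivalence $J^{\sigma}(X)\simeq\Omega^{\sigma}\Sigma^{\sigma}X$ with the projections onto the wedge summands from Theorem~\ref{thm:JamesSplitting} and then passes to adjoints under the $(\Sigma^{\rho},\Omega^{\rho})$ adjunction. Your added remarks on connectivity/group completion and the reduced-versus-unreduced bookkeeping are reasonable elaborations of details the paper leaves implicit.
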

\begin{proof}
These are adjoint to the projections onto the summands of the splitting of 
\[
\Sigma^{\rho} J^{\sigma}(X)\simeq\Sigma^{\rho}\Omega^{\sigma}\Sigma^{\sigma}X.\qedhere
\]
\end{proof}

Specializing to the case of $X$ a sphere, we have a sequence of maps which are equivariant refinements of the James-Hopf map.
\begin{corollary}
For any natural numbers $j,k$, we have a James-Hopf map
\[
h_{C_2}\colon \Omega^{\sigma} S^{j+(k+1)\sigma}\to \Omega^{\rho} S^{(j+k+1)\rho}.
\]
\end{corollary}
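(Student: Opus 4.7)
The plan is to recognize this corollary as a direct specialization of the previous one. Namely, take the first James--Hopf map
\[
h_{kC_2}\colon \Omega^{\sigma}\Sigma^{\sigma} X\to \Omega^{\rho}\Sigma^{\rho} N_{e}^{C_2} i_{e}^{\ast}X^{\wedge k}
\]
and specialize to the case $k=1$ and $X = S^{j+k\sigma}$ (with the $k$ in the statement of the corollary, not the $k$ in the previous map).

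First I would compute each piece of the source and target. The source becomes $\Omega^{\sigma}\Sigma^{\sigma} S^{j+k\sigma}\simeq \Omega^{\sigma} S^{j+(k+1)\sigma}$, which matches the desired domain. For the target, I would use that restriction to the trivial subgroup takes $\sigma$ to the one-dimensional trivial representation, so $i_e^{\ast} S^{j+k\sigma}\cong S^{j+k}$. Then I would invoke the standard identification $N_e^{C_2} S^{n}\cong S^{n\rho}$ (which is immediate from the fact that $N_e^{C_2}$ is strong symmetric monoidal and $N_e^{C_2} S^{1}\cong S^{\rho}$, where $\rho = 1 + \sigma$ is the regular representation of $C_2$). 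With $k=1$, this gives $N_{e}^{C_2} i_{e}^{\ast} S^{j+k\sigma} \cong S^{(j+k)\rho}$, and then $\Sigma^{\rho}$ produces $S^{(j+k+1)\rho}$, giving the desired target $\Omega^{\rho} S^{(j+k+1)\rho}$.

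Finally, I would assemble these identifications and feed them into the previous corollary to extract the James--Hopf map $h_{C_2}$ in its stated form. There is essentially no obstacle here beyond bookkeeping: the only substantive input is the identification $N_e^{C_2}(S^n)\cong S^{n\rho}$, which is the basic computation of the Hill--Hopkins--Ravenel norm on a representation sphere with trivial action, and the rest is a matter of plugging in values. The proof is a one-line specialization.
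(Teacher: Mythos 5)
Your proposal is correct and matches the paper's (implicit) argument: the paper obtains this map exactly by specializing the preceding corollary to $X = S^{j+k\sigma}$ and the single-norm case $h_{C_2}$, using $i_e^{\ast}S^{j+k\sigma}\cong S^{j+k}$ and the standard identification $N_e^{C_2}S^{n}\cong S^{n\rho}$. The bookkeeping is exactly as you describe, and it also explains the paper's remark that the target depends only on the underlying non-equivariant sphere.
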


There are two somewhat surprising features (and computationally vexing) aspects of this:
\begin{enumerate}
\item We see an extra loops appearing (here in the form of $\Omega^{\rho}=\Omega\Omega^{\sigma}$), making an analysis of the fiber trickier than in the classical case, and
\item the target of the map depends only on the underlying, non-equivariant sphere, and not on the particular equivariant sphere used.
\end{enumerate}

\subsection{The homology of the James construction}

As an immediate consequence of the stable splitting of the James construction, we can compute the Bredon homology of the signed James construction with coefficients in $\mB$.

\begin{theorem}\label{thm:HomologyofJsigma}
For any $C_{2}$-space $X$, we have an isomorphism of $RO(C_{2})$-graded Mackey functors
\[
\m{H}_{\star}\big(J^{\sigma}X;\mB\big)\cong 
\left(\bigoplus_{i = 0}^{\infty}N_{e}^{C_{2}}\big(\tilde{H}_{\ast}(i_{e}^{\ast}X;\F_{2})^{\otimes i}\big)\right)
\Box \big(\mB_{\star}\oplus \tilde{\m{H}}_{\star}(X;\m{B})\big).
\]
\end{theorem}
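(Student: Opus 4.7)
The plan is to apply $\m{B}$-Bredon homology termwise to the stable splitting of Theorem~\ref{thm:JamesSplitting} and then reduce each summand to the normed form via Proposition~\ref{prop:RedHomologyNorm} and a Künneth isomorphism of the flavor of Proposition~\ref{prop:HomologyCoindMod}. Specifically, applying $\tilde{\m{H}}_{\star}(\mhyphen;\m{B})$ to
\[
\Sigma^{\rho}_{+} J^{\sigma}(X)\simeq \bigvee_{k\geq 0} \Sigma^{\rho}\bigl(N_{e}^{C_2} i_{e}^{\ast}X^{\wedge k}\bigr)\wedge (S^0\vee X),
\]
and using that Bredon homology converts a $\rho$-suspension into a $\rho$-shift of grading and wedges into direct sums, reduces the computation to evaluating, for each $k\geq 0$, the two pieces
\[
\tilde{\m{H}}_{\star}\bigl(N_{e}^{C_2}i_{e}^{\ast}X^{\wedge k};\m{B}\bigr)\quad\text{and}\quad \tilde{\m{H}}_{\star}\bigl(N_{e}^{C_2}i_{e}^{\ast}X^{\wedge k}\wedge X;\m{B}\bigr).
\]

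For the first, Proposition~\ref{prop:RedHomologyNorm} gives
\[
\tilde{\m{H}}_{\star}\bigl(N_{e}^{C_2}i_{e}^{\ast}X^{\wedge k};\m{B}\bigr)\cong N_{e}^{C_2}\tilde{H}_{\ast}(i_{e}^{\ast}X^{\wedge k};\F_2),
\]
and since $\F_2$ is a field, the classical reduced Künneth theorem identifies the right-hand side with $N_{e}^{C_2}\bigl(\tilde{H}_{\ast}(i_{e}^{\ast}X;\F_2)^{\otimes k}\bigr)$. This produces exactly the $i=k$ term of the first factor in the claimed formula.

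For the second, the essential input is that $\tilde{\m{H}}_{\star}\bigl(N_{e}^{C_2}i_{e}^{\ast}X^{\wedge k};\m{B}\bigr)$ is free over $\m{B}_{\star}$: choosing a graded $\F_2$-basis for $\tilde{H}_{\ast}(i_{e}^{\ast}X^{\wedge k};\F_2)$ and expanding $N_e^{C_2}$ via the distributive law decomposes it into a direct sum of shifts of $\m{B}$ and of $\m{B}^{C_2}$, mirroring the reduced form of Theorem~\ref{thm:TwistedKunneth}. Freeness implies flatness, so the reduced $RO(C_2)$-graded Künneth spectral sequence (the pointed variant of Proposition~\ref{prop:HomologyCoindMod}, invoked by smashing $N_{e}^{C_2}i_{e}^{\ast}X^{\wedge k}$ with $X$) collapses to give
\[
\tilde{\m{H}}_{\star}\bigl(N_{e}^{C_2}i_{e}^{\ast}X^{\wedge k}\wedge X;\m{B}\bigr)\cong N_{e}^{C_2}\bigl(\tilde{H}_{\ast}(i_{e}^{\ast}X;\F_2)^{\otimes k}\bigr)\Box_{\m{B}_{\star}}\tilde{\m{H}}_{\star}(X;\m{B}).
\]
Assembling the two contributions for each $k$ and factoring the common tensor slot $\m{B}_{\star}\oplus \tilde{\m{H}}_{\star}(X;\m{B})\cong \tilde{\m{H}}_{\star}(S^0\vee X;\m{B})$ out of the sum yields the stated isomorphism.

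The main obstacle is verifying the freeness claim that feeds the Künneth collapse, because Proposition~\ref{prop:HomologyCoindMod} as stated is formulated unstably for $\Map_{e}(C_2,\mhyphen)\times Y$. I would handle this either (i) by deducing the reduced case from the unreduced one through the cofiber sequence $(Y\vee Y)_+\to (Y\times Y)_+\to N_{e}^{C_2}Y$ applied to $Y=i_{e}^{\ast}X^{\wedge k}$, or (ii) by giving a direct basis argument, exhibiting an explicit free $\m{B}_{\star}$-basis via the distributive law as in the remark following Theorem~\ref{thm:TwistedKunneth}. Either argument is mechanical once the bookkeeping of $C_2$-orbits on the $\F_2$-basis is unwound.
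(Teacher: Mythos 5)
Your proposal is correct and follows essentially the same route as the paper: apply Bredon homology to the stable splitting of Theorem~\ref{thm:JamesSplitting}, identify the normed wedge summands via Proposition~\ref{prop:RedHomologyNorm} and the classical K\"unneth theorem, and handle the extra smash with $S^0\vee X$ by the pointed version of Proposition~\ref{prop:HomologyCoindMod}. Your added care about the freeness of $N_{e}^{C_2}\bigl(\tilde{H}_{\ast}(i_{e}^{\ast}X;\F_2)^{\otimes k}\bigr)$ over $\m{B}_{\star}$, which justifies the K\"unneth collapse, makes explicit a point the paper leaves implicit.
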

\begin{proof}
The stable splitting of the signed James construction yields
\[
\Sigma^{\infty\rho}_{+} J^{\sigma}X\simeq \left(\bigvee_{i=0}^{\infty} N_{e}^{C_{2}}(i_{e}^{\ast}X)^{\wedge i}\right)\wedge (S^{0}\vee X).
\]
Proposition~\ref{prop:RedHomologyNorm} together with the classical K\"unneth theorem shows that 
\[
\m{H}_{\star}\left(\bigvee_{i=0}^{\infty} N_{e}^{C_{2}}(i_{e}^{\ast}X)^{\wedge i};\mB\right)\cong 
\left(\bigoplus_{i = 0}^{\infty}N_{e}^{C_{2}}\big(\tilde{H}_{\ast}(i_{e}^{\ast}X;\F_{2})^{\otimes i}\big)\right).
\]
The pointed version of Proposition~\ref{prop:HomologyCoindMod} then gives the rest.
\end{proof}

\begin{remark}
There is a version of the Bott-Samelson theorem for the signed loops, which allows us to describe the homology as a particular algebraic functor \cite{BottSam}. For this case, it is possible to work out directly, though the analysis is a bit unenlightening. In short, it is the free $\ccD(\sigma)$-algebra in Mackey functors on $\m{\tilde{H}}_{\star}(X;\mB)$. The difficulty here is describing the twisted powers, just as with the norm in spaces above. In joint work with Tyler Lawson, we will return to this point.
\end{remark}

\appendix

\section{Splitting $C_2$-coinduction}
Classically, the Cartesian product splits into the wedge and smash products after a single suspension. Equivariantly, we have instead our $G$-Cartesian monoidal structure on $G$-spaces, and this necessitates a more general splitting result. We begin with a non-example.

\begin{proposition}
All suspensions by trivial representations of the $C_2$-equivariant map $S^{\sigma}\to C_{2+} \wedge S^1$ with cofiber $\Map_e(C_2,S^1)$  are essential.
\end{proposition}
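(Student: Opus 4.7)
The plan is to prove the contrapositive: assume $\Sigma^n f$ is null-homotopic for some $n \geq 0$. Since $f$ is, up to equivariant homotopy, the attaching map of the top free ($\rho$-)cell in the standard $C_2$-CW decomposition of $\Map_e(C_2, S^1) = S^1 \times S^1$ with swap (whose $1$-skeleton is the fat wedge $C_{2+} \wedge S^1 = S^1 \vee S^1$), this is equivalent to an equivariant splitting
\[
\phi \colon \Sigma^n \Map_e(C_2, S^1) \xrightarrow{\simeq} \Sigma^n (C_{2+} \wedge S^1) \vee \Sigma^n S^{\rho}.
\]
I would obstruct the existence of such $\phi$ by comparing the natural fixed-subspace inclusions after applying $i_e^{*}$ and ordinary integer homology.

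First I would identify the two fixed-point inclusions concretely. On the left, $(\Map_e(C_2, S^1))^{C_2}$ is the diagonal $S^1 \subset S^1 \times S^1$; under the suspension isomorphism $H_{n+1}(\Sigma^n(S^1 \times S^1);\mathbb{Z}) \cong H_1(S^1 \times S^1) = \mathbb{Z}^2$ (with basis from the two circle factors, using the James splitting $\Sigma(S^1 \times S^1) \simeq S^2 \vee S^2 \vee S^3$ for $n \geq 1$, or directly for $n = 0$), the $n$-fold suspension of the diagonal inclusion sends the generator to $(1,1)$. On the right, the fixed subspace $(\Sigma^n S^{\rho})^{C_2} = S^{n+1}$ sits entirely in the $\Sigma^n S^{\rho}$ wedge summand (since $(\Sigma^n(C_{2+}\wedge S^1))^{C_2}$ is a point); its underlying inclusion is the equator $S^{n+1} = (\Sigma^n S^{\rho})^{C_2} \hookrightarrow (\Sigma^n S^{\rho})^e = S^{n+2}$ followed by the wedge inclusion, and this vanishes on $H_{n+1}$ by dimension.

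By naturality of the fixed-point functor applied to $\phi$, these two inclusions fit into a commuting square whose verticals are $\phi^{C_2}$ and $\phi$. Applying $i_e^{*}$ and $H_{n+1}(-;\mathbb{Z})$, both verticals become isomorphisms ($\mathbb{Z} \to \mathbb{Z}$ on the left, $\mathbb{Z}^2 \to \mathbb{Z}^2$ on the right), yet commutativity forces the right-hand isomorphism to send the non-zero class $(1,1) \in \mathbb{Z}^2$ to zero, which is absurd. This contradiction rules out the existence of $\phi$ for every $n \geq 0$, and hence $\Sigma^n f$ is essential.

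The hard part will be the bookkeeping of the two $H_{n+1} \cong \mathbb{Z}^2$ identifications and verifying that the naturality square really yields the contradictory equation as an equality of group homomorphisms; once set up, the contradiction is a one-line dimension check uniform in $n$. The case $n = 0$ is also independently visible non-equivariantly, since $\pi_1(S^1 \times S^1) = \mathbb{Z}^2$ differs from $\pi_1((S^1 \vee S^1) \vee S^2)$, which is free of rank two.
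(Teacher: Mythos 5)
Your argument is correct, and it takes a genuinely different route from the paper's. You argue by contradiction: an equivariant null-homotopy of $\Sigma^n f$ would split the cofiber as $\Sigma^n(C_{2+}\wedge S^1)\vee \Sigma^n S^{\rho}$, and you obstruct this with the naturality square relating the fixed-point inclusion $Z^{C_2}\hookrightarrow i_e^{*}Z$ on the two sides: on the torus side the diagonal circle carries the class $(1,1)\in H_{n+1}\cong\mathbb Z^2$ of the underlying space, while on the wedge side the fixed sphere lands in $S^{n+\rho}$ and dies in $H_{n+1}$ for dimension reasons; since $i_e^{*}\phi$ would have to be injective on $H_{n+1}$, this is a contradiction (note you only need $i_e^{*}\phi$ to be an equivalence, not $\phi^{C_2}$). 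The paper instead works directly in the mapping set: the cofiber sequence for the cell structure of $S^{\sigma+k}$ shows the collapse map induces an isomorphism $[S^{1+k}, i_e^{*}(C_{2+}\wedge S^{1+k})]\xrightarrow{\cong}[S^{\sigma+k}, C_{2+}\wedge S^{1+k}]^{C_2}$, with inverse given by restricting to a half-circle, and then identifies the class of $\Sigma^k f$ with the pinch map $S^{1+k}\to S^{1+k}\vee S^{1+k}$, which survives abelianization. The paper's computation is sharper --- it determines the group and the exact class, and explains the geometric subtlety that the underlying map is a commutator while the equivariant half is the pinch map --- whereas your argument is a softer obstruction that only certifies nonvanishing; both ultimately detect the same class $(1,1)\in\mathbb Z^2$, yours via the diagonal in the homology of the torus, the paper's via the pinch map. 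Two small wording points: the splitting of the cofiber is implied by, not equivalent to, the null-homotopy (only the implication is needed for your contrapositive), and the James splitting you invoke for $\Sigma(S^1\times S^1)$ is unnecessary, as the suspension isomorphism alone identifies $H_{n+1}(\Sigma^n(S^1\times S^1))$ with $H_1(S^1\times S^1)$.
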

\begin{proof}
The standard picture showing the $2$-torus as a quotient of the square can be visibly made equivariant. In this case, all that follows is simply reinterpreting that picture.

First observe that the standard $C_2$-equivariant cell-structure of $S^{\sigma}$ (and hence of $S^{k+\sigma}$) gives us an exact sequence (of pointed sets, if $k=0$):
\[
[S^{1+k},C_{2+}\wedge S^{1+k}]^{C_2}\to [C_{2+}\wedge S^{1+k},C_{2+}\wedge S^{1+k}]^{C_2}\to [S^{\sigma+k},C_{2+}\wedge S^{1+k}]^{C_2}\to [S^k, C_{2+}\wedge S^{1+k}]^{C_2}.
\]
Since the $C_2$-action on $S^k$ and $S^{1+k}$ is trivial, and since the $C_2$-fixed points of any space of the form $C_{2+}\wedge X$ are a point, we conclude that the two outer terms both vanish. Thus for all $k$, the map
\[
[S^{1+k}, i_e^{\ast} C_{2+}\wedge S^{1+k}]\cong [C_{2+}\wedge S^{1+k},C_{2+}\wedge S^{1+k}]^{C_2}\xrightarrow{q^\ast} [S^{\sigma+k},C_{2+}\wedge S^{1+k}]^{C_2}
\]
induced by the collapse map $S^{\sigma}\to C_{2+}\wedge S^1$ is an isomorphism. In particular, the suspension map is an isomorphism as well for all $k\geq 1$, by the classical result, and the suspension for $k=0$ is abelianization. 

We pause here to point out a simple geometric fact: the inverse to $q^{\ast}$ is the function that assigns to a $C_2$-equivariant map 
\[
f\colon S^{\sigma}\to C_{2+}\wedge S^{1}
\]
the non-equivariant map $f|_{Im(z)\geq 0}$, where we are viewing $S^\sigma$ as the unit circle in $\mathbb C$. Unpacking the standard description of the attaching map for $S^1\times S^1$ shows that the map $S^{\sigma}\to C_{2+}\wedge S^1$ here corresponds to the standard crush map $S^1\to S^1\vee S^1$, which survives abelianization.
\end{proof}
\begin{remark}
The curious fact we used here is that while the underlying non-equivariant map is the commutator of the two canonical inclusions $S^1\hookrightarrow S^1\vee S^1$, as a $C_2$-equivariant map, the upper and lower semi-circles are oriented the same way. In particular, we never trace out the inverses, equivariantly.
\end{remark}

As should at this point be unsurprising, if we use twisted suspensions and a twisted version of the join, then we can prove a splitting result.

\begin{definition}
If $X$ is a $C_2$-space, then let
\[
X^{\ast C_2}:= \Map(C_2,X)\times D(\sigma)/\sim,
\]
where $\simeq$ is the equivalence relation
\begin{align*}
&\forall x,y,y'\in X,\quad (x,y,-1)\sim (x,y',-1)\\
&\forall x, x', y\in X,\quad  (x,y,1)\sim (x',y,1) .
\end{align*}

If $X$ is a pointed $C_2$-space with basepoint $x_0$, then the reduced version (which we will also denote $X^{\ast C_2}$) collapses the subspace $(x_0,x_0)\times D(\sigma)$.
\end{definition}

Since the group action on $\Map(C_2,X)\times D(\sigma)$ is given by
\[
(x,y,t)\mapsto (y,x,-t),
\]
the equivalence relation is built to be equivariant. 

Using this twisted version of the join, we can prove the analogue of the ordinary splitting of the product.

\begin{theorem}\label{thm:SplittingCoinduction}
Let $X$ be a based $C_2$ space of the homotopy type of a $C_2$-CW complex. Then we have a natural weak equivalence
\[
\Sigma^{\sigma} \Map(C_2,X)\simeq (C_{2+}\wedge \Sigma X)\vee\Sigma^{\sigma} N_e^{C_2} X.
\]
\end{theorem}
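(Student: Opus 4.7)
The strategy is to split the cofiber sequence
\[
C_{2+}\wedge X \xrightarrow{\iota} \Map(C_{2},X) \xrightarrow{q} N_{e}^{C_{2}}X
\]
after applying $\Sigma^{\sigma}$, by constructing an explicit $C_{2}$-equivariant map
\[
\Psi\colon \Sigma^{\sigma}\Map(C_{2},X)\longrightarrow (C_{2+}\wedge \Sigma X)\vee \Sigma^{\sigma}N_{e}^{C_{2}}X
\]
whose first component is a retraction of the $\Sigma^{\sigma}$-suspended fat-wedge inclusion (via the identification $\Sigma^{\sigma}(C_{2+}\wedge X)\cong C_{2+}\wedge \Sigma X$) and whose second component is $\Sigma^{\sigma}q$.

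To set this up, I would first replace $\Map(C_{2},X)$ by the canonically isomorphic $\Map(C_{2},i_{e}^{\ast}X)$, so that the $C_{2}$-action becomes the literal factorwise swap on $i_{e}^{\ast}X\times i_{e}^{\ast}X$ and the quotient $q$ becomes the tautological $(a,b)\mapsto a\wedge b$. The key geometric input is the decomposition of $D(\sigma)=[-1,1]$ into three $C_{2}$-stable subintervals $I_{-}=[-1,-1/2]$, $I_{0}=[-1/2,1/2]$, $I_{+}=[1/2,1]$, with the outer two interchanged and the middle one reversed by the antipodal action. Writing $\Sigma^{\sigma}\Map(C_{2},X)$ as $D(\sigma)_{+}\wedge \Map(C_{2},X)$ modulo $S(\sigma)_{+}\wedge \Map(C_{2},X)$, I would define $\Psi$ piecewise: on $I_{+}\times \Map(C_{2},X)$ by $(t,(a,b))\mapsto (e,\,2t-1,\,a)\in C_{2+}\wedge S^{1}\wedge X$; on $I_{-}\times \Map(C_{2},X)$ by $(t,(a,b))\mapsto (\gamma,\,-2t-1,\,b)$; and on $I_{0}\times \Map(C_{2},X)$ by $(t,(a,b))\mapsto (2t,\,a\wedge b)\in S^{\sigma}\wedge N_{e}^{C_{2}}X$. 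All three pieces collapse to the basepoint at $t=\pm 1/2$ and on $S(\sigma)=\{\pm 1\}$, so $\Psi$ descends to the smash product. Equivariance is direct: $t\mapsto -t$ interchanges $I_{\pm}$ and reverses $I_{0}$ in a way compatible with the swap $(a,b)\mapsto (b,a)$ via the swap $e\leftrightarrow\gamma$ on $C_{2+}$ and the norm-swap on $N_{e}^{C_{2}}X$.

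To finish, I would check that $\Psi$ is a weak equivalence on both underlying spaces and $C_{2}$-fixed points. Forgetting the action, $\Psi$ is exactly the classical subdivision splitting $\Sigma(X\times X)\simeq \Sigma X\vee \Sigma X\vee \Sigma(X\wedge X)$. On $C_{2}$-fixed points, the elementary identity $(S^{\sigma}\wedge A)^{C_{2}}\cong A^{C_{2}}$ (realized at the non-basepoint fixed point $0\in S^{\sigma}$) yields canonical identifications $(\Sigma^{\sigma}\Map(C_{2},X))^{C_{2}}\cong X\cong (\Sigma^{\sigma}N_{e}^{C_{2}}X)^{C_{2}}$ via the diagonal $a\mapsto (a,a)$ or $a\mapsto a\wedge a$, while $(C_{2+}\wedge \Sigma X)^{C_{2}}\simeq \ast$; and $\Psi^{C_{2}}$ sends $(0,(a,a))$ to $(0,a\wedge a)$, i.e.~the identity $X\to X$.

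The main obstacle is the bookkeeping of the two distinct $C_{2}$-actions on $X\times X$: the ``twisted'' action from $\Map(C_{2},X)$, which couples the factor-swap with the $C_{2}$-action on $X$, and the ``untwisted'' swap used both in the piecewise formulas and in $N_{e}^{C_{2}}X$. The natural isomorphism $\Map(C_{2},X)\cong \Map(C_{2},i_{e}^{\ast}X)$ reconciles these, after which the construction reduces to the equivariant analogue of the classical subdivision argument for the splitting of $\Sigma(X\times X)$.
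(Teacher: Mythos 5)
Your proof is correct, but it takes a genuinely different route from the paper. The paper's argument is join-theoretic: it forms the twisted join $X^{\ast C_2}=\Map(C_2,X)\times D(\sigma)/\!\sim$, attaches the cone $C_{2+}\wedge CX$ along the fat wedge, and obtains the splitting from a zigzag of two collapse equivalences (crushing the attached cone identifies the result with $\Sigma^{\sigma}\Map(C_2,X)$; crushing the embedded copy of $C_{2+}\wedge CX$ inside the join identifies it with $(C_{2+}\wedge\Sigma X)\vee\Sigma^{\sigma}N_e^{C_2}X$). You instead build a single explicit comparison map by subdividing $D(\sigma)$ into three $C_2$-stable intervals --- the equivariant upgrade of the classical pinch/subdivision proof that $\Sigma(X\times Y)\simeq\Sigma X\vee\Sigma Y\vee\Sigma(X\wedge Y)$ --- and then verify it is an equivalence on underlying spaces (by the classical splitting) and on $C_2$-fixed points (where everything is concentrated at $0\in S^{\sigma}$ and the map is the identity on $(X\wedge X)^{C_2}\cong i_e^{\ast}X$), invoking the equivariant Whitehead theorem. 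The geometry is the same --- your three intervals are exactly the paper's join coordinate with cones glued on at the two ends --- but the trade-offs differ: the paper's version works at the level of homeomorphisms and cone collapses and never needs a homology computation, while yours produces an explicit splitting map (which is what one actually wants later, e.g.\ to write down the James--Hopf projections in Theorem~\ref{thm:JamesSplitting}) at the cost of importing the nonequivariant splitting and the Whitehead theorem. Your checks of equivariance, of the untwisting $\Map(C_2,X)\cong\Map(C_2,i_e^{\ast}X)$, and of the identity $(S^{\sigma}\wedge A)^{C_2}\cong A^{C_2}$ are all sound.
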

\begin{proof}
Let $x_0$ denote the basepoint of $X$. There is a canonical map 
\[
C_{2+}\wedge X\to X^{\ast C_2}
\]
given by
\[
(g,x)\mapsto g\cdot (x,x_0, 1).
\]
Using this, we can attach the cone on $C_{2+}\wedge X$:
\[
Y=C_{2+}\wedge CX\cup X^{\ast C_2}.
\]
On the one hand, since the cone on $C_{2+}\wedge X$ is equivariantly contractible, the quotient map crushing it to a point is a homotopy equivalence:
\[
Y\xrightarrow{\simeq} Y/(C_{2+}\wedge CX)\cong \Sigma^{\sigma}\Map(C_2,X),
\]
where the final homeomorphism is the standard one identifying $D(\sigma)/S(\sigma)\cong S^{\sigma}$.

On the other hand, the non-equivariant embedding $CX\hookrightarrow X^{\ast C_2}$ given by
\[
(x,t)\mapsto (x,2t-1)
\]
obviously extends to an equivariant embedding 
\[
C_{2+}\wedge CX\hookrightarrow X^{\ast 2},
\]
which shows that the subcomplex $C_{2+}\wedge X$ inside of the twisted join is null-homotopic. Thus we have an equivalence
\[
Y\simeq Y/(C_{2+}\wedge X)\simeq (C_{2+}\wedge \Sigma X)\vee X^{\ast C_2}/(C_{2+}\wedge CX).
\]
However, the embedded copy of $C_{2+}\wedge CX$ consists of all points of $X^{\ast C_2}$ of the form $(x,y,t)$ where at least one of $x$ and $y$ is $x_0$. In particular, we have an obvious homeomorphism
\[
X^{\ast C_2}/(C_{2+}\wedge CX)\cong\Sigma^{\sigma} N_e^{C_2} X,
\]
finishing the proof.
\end{proof}
\begin{remark}
The apparently asymetrical appearance of $\Sigma$ rather than $\Sigma^{\sigma}$ for the wedge can be explained by the fact that these agree when we forget the equivariance. We therefore have a natural homeomorphism
\[
\Sigma^{\sigma} C_{2+}\wedge X\cong\Sigma C_{2+}\wedge X.
\]
\end{remark}

\bibliographystyle{plain}

\bibliography{SignedLoops}

\end{document}